%%%%%%%%%%%%%%%%%%%%%%%%%%%%%%%%%%%%%%%%%%%%%%%%%%%%%%%%%%%%%%%%%%%%
%
%  Liu Chen
%
%%%%%%%%%%%%%%%%%%%%%%%%%%%%%%%%%%%%%%%%%%%%%%%%%%%%%%%%%%%%%%%%%%%%

\documentclass[preprint,
3p,
times loads txfonts
%, draft
]{elsarticle}

%% Packages and macros.
\usepackage{amsmath}
\usepackage{amsthm}
\usepackage{cases}
\usepackage{enumitem}              % Improved itemize & enumerate environments.
\usepackage{mathrsfs}
\usepackage{mathtools}
\usepackage[OMLmathsfit]{isomath}  % Loads also fixmath.sty (upper case italic greek letters, definition of \upDelta etc).
\usepackage{graphicx}
\usepackage{grffile}               % Fixes errors with images whose file names contain more than one dot, e.g., *.0000.png.
\usepackage[T1]{fontenc}

%% Color.
\usepackage{xcolor}

%% Set fonts for math symbols.
\usepackage[cmintegrals]{newpxmath}  % The fitting math symbols for newpxtext.

\usepackage{bm}                      % Provides \boldsymbol that we use in \vec{}.

%% Sets of numbers and polynomials.
\usepackage{dsfont}                                              % Double striked symbols.
                          % Set of natural numbers.
                          % Set of integers.
  \newcommand{\IQ}{\ensuremath\mathds{Q}}                        % Set of rational numbers.
  \newcommand{\IR}{\ensuremath\mathds{R}}                        % Set of real numbers.
  \newcommand{\IP}{\ensuremath\mathds{P}}                        % Set of polynomials.

%% Define symbols and notations.
                    % Partition of the domain.
                                  % Interior faces.
                                % Number of time steps.
                                % Number of elements.
                              % Number of local degrees of freedom.
\renewcommand*{\vec}[1]{{\boldsymbol{#1}}}                       % Vector.
\DeclareMathAlphabet{\mathbfsf}{\encodingdefault}{\sfdefault}{bx}{n}
\newcommand*{\vecc}[1]{\mathbfsf{#1}}                            % Tensor or matrix.
\newcommand*{\transpose}[1]{{#1}^\mathrm{T}}                     % Transposition.
                                   % Unit outward normal.
                                    % Differential d for ``int dx''.
\newcommand*{\grad}{\vec{\nabla}}                                % Gradient.
\renewcommand*{\div}{\vec{\nabla}\cdot}                          % Divergence.
                          % Curl.
%\renewcommand*{\laplace}{\upDelta}                               % Laplace operator. 
                % Deformation tensor in NS equation.

                      % Average operator.
      % Jump operator.
\newcommand*{\abs}[1]{\ensuremath{|#1|}}                         % Absolute value or volume.
\newcommand*{\norm}[2]{\|#1\|_{#2}}                              % Norm.
\newcommand*{\on}[2]{\left.#1\right\vert_{#2}}                   % restriction on a set, i.e., eg u|G.
                           % Upwind symbol. 
\newcommand*{\prox}{\mathtt{prox}}                               % The proximal.

%% Nondimensional numbers.
                                  % Reynolds number.
                                   % Capillary number.
                                   % P\'eclet number.
                                   % Cahn number. 

%% Table.
\usepackage{longtable}
\usepackage{multirow}
\usepackage{tabularx}                                            % Specify total length of tabular.
  \newcolumntype{R}{>{\raggedleft\arraybackslash}X}
  \newcolumntype{L}{>{\raggedright\arraybackslash}X}
  \newcolumntype{C}{>{\centering\arraybackslash}X}
\usepackage{booktabs}                                            % Improve tabulars, enable \top-, \mid-, \bottomrule.
\usepackage{rotating}                                            % Provide command for rotating table contents.
          
%% Theorems environment.
\newtheorem{lemma}{Lemma}
\newtheorem{theorem}{Theorem}
\newtheorem{remark}{Remark}

%% Preprint submit to journal.
\journal{ }

\begin{document}

\begin{frontmatter}
\title{Efficient optimization-based invariant-domain-preserving limiters in solving gas dynamics equations}
\author[label1]{Chen Liu}\ead{chenl@uark.edu}
\author[label4]{Dionysis Milesis}\ead{dmilesis@bu.edu}
\author[label2]{Chi-Wang Shu}\ead{chi-wang_shu@brown.edu}
\author[label3]{Xiangxiong Zhang\corref{cor1}}\ead{zhan1966@purdue.edu}
\address[label1]{Department of Mathematical Sciences, University of Arkansas, Fayetteville, Arkansas 72701, USA.}
\address[label4]{Department of Mathematics and Statistics, Boston University, Boston, MA 02215, USA.}
\address[label2]{Division of Applied Mathematics, Brown University, Providence, Rhode Island 02912, USA.}
\address[label3]{Department of Mathematics, Purdue University, West Lafayette, Indiana 47907, USA.}

\begin{abstract}
We introduce effective splitting methods for implementing optimization-based limiters to 
enforce the invariant domain in gas dynamics in high order accurate numerical schemes.
The key ingredients include an easy and efficient explicit formulation of the projection onto the invariant domain set, and also proper applications of the classical Douglas-Rachford splitting
and its more recent extension Davis-Yin splitting. 
Such an optimization-based approach can be applied to many numerical schemes to construct high order accurate, globally conservative, and invariant-domain-preserving schemes for compressible flow equations.
As a demonstration, we apply it to high order discontinuous Galerkin schemes and test it 
on demanding benchmarks to validate the robustness and performance of both $\ell^1$-norm minimization limiter and $\ell^2$-norm minimization limiter.
\end{abstract}

\begin{keyword}
%% keywords here, in the form: keyword \sep keyword
optimization-based limiters \sep invariant-domain-preserving \sep positivity-preserving \sep discontinuous Galerkin \sep Davis-Yin splitting \sep Douglas-Rachford splitting 
%% PACS codes here, in the form: \PACS code \sep code

%% MSC codes here, in the form: \MSC code \sep code
%% or \MSC[2008] code \sep code (2000 is the default)
\vspace{.5\baselineskip}
\MSC 65K05 \sep 65K10 \sep 65M60 \sep 90C25 
\end{keyword}
\end{frontmatter}

%% Content
%%%%%%%%%%%%%%%%%%%%%%%%%%%%%%%%%%%%%%%%%%%%%%%%%%%%%%%%%%%%%%%%%%%%%%%%%%%%%%%%%%%%%%%%%%%%%%%%%%%%%%%%%%%%%%%%%%%%%%%
\section{Introduction}
%%%%%%%%%%%%%%%%%%%%%%%%%%%%%%%%%%%%%%%%%%%%%%%%%%%%%%%%%%%%%%%%%%%%%%%%%%%%%%%%%%%%%%%%%%%%%%%%%%%%%%%%%%%%%%%%%%%%%%%
\subsection{Background and motivation}
The compressible  Euler and Navier-Stokes (NS) equations are fundamental models in gas dynamics, with broad applications in aeronautics, astronautics, and astrophysics. 
When solving gas dynamics equations,  it is necessary for numerical methods to preserve positivity of density and internal energy, not only to produce physically meaningful solutions, but more importantly to achieve nonlinear stability, especially for demanding applications involving low density and pressure such as very high speed shocks and blast waves \cite{ha2005numerical, wang2012robust}. 

The conserved variables in the gas dynamics equations are density, momentum, and total energy denoted by $\rho, \vec{m}, E$. Then the internal energy can be written as $\rho e =E-\frac12 \frac{\|\vec m\|^2}{\rho}$, and   for ideal gas the pressure is $p=(\gamma-1)\rho e$ with a constant $\gamma>1$. Let $\vec{m}$ be a column vector, then the  set of  admissible states of positive density and positivity internal energy (or pressure) can be written as
\begin{equation}
    G = \left\{\vec{U} = \transpose{[\rho,\transpose{\vec{m}},E]}\!:~ \rho > 0,~~ \rho e(\vec{U}) = E - \frac{\norm{\vec{m}}{}^2}{2\rho} > 0\right\}.
    \label{invariant-domain-original}
\end{equation}
Since the function $\rho e(\vec{U})$ is concave with respect to $\vec{U}$, by Jensen's inequality, the set $G$ is convex.
In the literature,  numerical schemes with solutions staying in the convex set $G$ are positivity-preserving schemes
\cite{einfeldt1991godunov,zhang2010positivity}, and such schemes are also called invariant-domain-preserving schemes \cite{guermond2021second}
since well-posed solutions should stay in this invariant domain set $G$.
 For numerical implementation, a numerical admissible set or invariant domain $G^\varepsilon$ is often considered with a small $\varepsilon > 0$:
\begin{align}\label{invariant-domain}
G^\varepsilon = \left\{\vec{U} = \transpose{[\rho, \transpose{\vec{m}}, E]}\!:~ \rho \geq \varepsilon,~~ \rho e(\vec{U}) = E - \frac{\norm{\vec{m}}{2}^2}{2\rho} \geq \varepsilon\right\},
\end{align} which is a convex and closed set. We refer to \cite{wu2025high} for a recent comprehensive review on invariant-domain-preserving schemes.

For enforcing bounds or positivity, there are quite a few approaches developed in the past 15 years for  high order accurate schemes,  most of which use fully explicit time discretizations \cite{zhang2010maximum,guermond2017invariant,kuzmin2024property}. For the compressible NS equations, the fully explicit method \cite{zhang2017positivity} typically suffers from restrictive time step size constraints like $\Delta t = \mathcal{O}(\operatorname{Re}\Delta{x}^2)$, which is suitable only for large Reynolds numbers $\operatorname{Re}\gg 1$. To use larger time step sizes like $\Delta t = \mathcal{O}(\Delta{x})$, 
 semi-implicit \cite{guermond2021second,LZ2022CNS} and the fully implicit method \cite{grapsas2016unconditionally} can be considered,  
 but   extensions of such positivity-preserving schemes to arbitrarily high order accuracy are in general quite difficult. For a more detailed review, see \cite[Section~1.2]{LZ2022CNS}.

\par
For avoiding small time steps and easy extensions to arbitrarily high order accuracy, one approach is the optimization-based method \cite{guba2014optimization,van2019positivity,ruppenthal2023optimal,liu2024simple}. 
A minimization is solved to seek minimal modification of the given numerical solution while enforcing the conservation and positivity or bounds as constraints. In this paper, we focus on how to design such an optimization-based method with the invariant domain $G^\varepsilon$ as a constraint, which has not been well studied in previous efforts of designing optimization-based positivity-preserving limiters in the literature.

\subsection{Optimization-based invariant-domain-preserving limiters}\label{sec:compute_prox}

An optimization-based limiter can be applied to any numerical schemes. In this paper, we focus on discontinuous Galerkin (DG) methods as an example, and the generalization to other popular schemes such as finite difference, finite volume, and continuous finite element method is straightforward. For simplicity, we only consider DG on uniform meshes.
For reducing computational cost, we only apply optimization-based limiters to post process the cell averages of DG solutions so that cell averages are ensured to stay in the invariant domain $G^\varepsilon$, after which the simple scaling limiter in \cite{zhang2010positivity} can be applied to each DG polynomial in each cell to further ensure quadrature point values to be in the invariant domain $G^\varepsilon$. Another alternative is to apply optimization-based limiters to all the  quadrature point values of the DG solution, to which the efficient operator splitting methods in this paper can also be used.
   
Given a DG solution $\vec{U}_h$, define the piecewise constant function $\overline{\vec{U}_h}$ as the cell average of $\vec{U}_h$ in each cell $K$, that is, $\overline{\vec{U}_h}|_{K} = \frac{1}{\abs{K}}\int_K \vec{U}_h$. If there exists a bad cell $K$ with $\overline{\vec{U}_h}|_{K}\notin G^\varepsilon$, then we seek a piecewise constant $\vec{X}_h$ which minimizes the distance to $\overline{\vec{U}_h}$ under a chosen norm with the constraints of preserving global conservation and invariant domain:
\begin{align}\label{eq:introduction_opt_model}
\min_{\vec{X}_h} \norm{\vec{X}_h - \overline{\vec{U}_h}}{}
\quad\text{subject to}\quad
\int_\Omega \vec{X}_h = \int_\Omega \vec{U}_h ~~\text{and}~~ 
\on{\vec{X}_h}{K_i}\in G^\varepsilon~~\text{for any cell}~K_i.
\end{align}
One can certainly add more constraints to the  minimization above for enforcing more desired physical properties. On the other hand, one major computational challenge is how to efficiently solve such a constrained minimization, which is a limiter to be applied to each time step of high order schemes solving a time dependent problem. As will be shown in this paper, the two constraints in  \eqref{eq:introduction_opt_model} can be efficiently handled by splitting methods. 

Let $\overline{\vec{W}_h}$ denote the minimizer to \eqref{eq:introduction_opt_model}. Then after applying the optimization-based limiter  \eqref{eq:introduction_opt_model}, the postprocessed DG polynomial can be written as
\begin{equation}\label{eq:introduction_opt_model1_post} 
\widehat{\vec{U}}_h = (\vec{U}_h - \overline{\vec{U}_h}) + \overline{\vec{W}_h},
\end{equation}
and it preserves global conservation $\int_\Omega \widehat{\vec{U}}_h = \int_\Omega \vec{U}_h$ and 
$\widehat{\vec{U}}_h$ has  cell averages $\overline{\vec{W}_h}|_{K_i}\in G^\varepsilon$.

The modification \eqref{eq:introduction_opt_model1_post} does not destroy the approximation order  \cite{liu2024simple,LZ2024CNS,LHTZ2024FP}.
In particular,
let $\overline{\vec{U}}$ denote the cell average of the exact solution, and assume the numerical solution has the same integral as the exact solution, then $\overline{\vec{U}}$ satisfies both constraints in \eqref{eq:introduction_opt_model}, thus the minimizer $\overline{\vec{W}_h}$ to \eqref{eq:introduction_opt_model} 
 satisfies $\|\overline{\vec{W}_h}-\overline{\vec{U}_h}\|\leq \|\overline{\vec{U}}-\overline{\vec{U}_h}\|$, and 
 \begin{equation}
     \|\overline{\vec{W}_h}-\overline{\vec{U}}\|\leq \|\overline{\vec{W}_h}-\overline{\vec{U}_h}\|+\|\overline{\vec{U}_h}-\overline{\vec{U}}\|\leq 2\|\overline{\vec{U}_h}-\overline{\vec{U}}\|.
     \label{accuracy-proof}
 \end{equation}  This simple argument shows that the limiter \eqref{eq:introduction_opt_model} does not destroy the order of accuracy w.r.t. the chosen norm in \eqref{eq:introduction_opt_model}. 
As will be shown in Theorem \ref{theorem-limiter} in Section \ref{sec:limiter_Euler},  we can get a better accuracy result than   \eqref{accuracy-proof} if $L^2$-norm is used, i.e., taking $\norm{\cdot}{} = \norm{\cdot}{L^2}$ in \eqref{eq:introduction_opt_model},
\begin{align} 
\|\overline{\vec{W}_h}-\overline{\vec{U}}\|_{L^2}<\|\overline{\vec{U}_h}-\overline{\vec{U}}\|_{L^2}.
\end{align}

Once the DG polynomial cell averages are in the invariant domain $G^\varepsilon$,
 the simple scaling limiter in \cite{zhang2010maximum, zhang2010positivity} can be applied to further enforce quadrature point values to be in $G^\varepsilon$. 
In the literature, such a limiter is sometimes called Zhang-Shu limiter, which does not affect  accuracy either \cite{zhang2017positivity}.

\subsection{Operator splitting methods in convex optimization}

In \eqref{eq:introduction_opt_model}, we still need to specify a norm, and
a popular choice is to employ the $L^2$ norm, i.e. taking $\norm{\cdot}{} = \norm{\cdot}{L^2}$. Meanwhile, the $L^1$ norm $\norm{\cdot}{} = \norm{\cdot}{L^1}$ has also been often considered in the literature. As will be shown in our numerical tests, $L^1$ norm limiter may be more desirable for certain problems, such as the astrophysical jet. 
In the convex optimization literature,
the indicator function for a closed convex $\Lambda$ is defined as: 
\begin{align}
\iota_\Lambda(\vec{X}) = \begin{cases}
    0, & \vec{X}\in \Lambda,\\
    +\infty, & \vec{X}\notin \Lambda.
\end{cases}
\end{align}
We remark that  $+\infty$ must be used in the definition of the indicator function, instead of a large enough positive number, so that $\iota_\Lambda(\vec{X})$ can be a proper convex function, 
see \cite[Section 3.2]{chen2025enforcing}.
With respect to the $L^2$ and $L^1$ norms, the optimization problem \eqref{eq:introduction_opt_model} can be equivalently written as the following two models, respectively,
\begin{align}
\text{($L^2$~model)}:&\quad~
\min_{\vec{X}_h} \norm{\vec{X}_h - \overline{\vec{U}_h}}{L^2}^2 + \iota_{\Lambda_1}(\vec{X}_h) + \iota_{\Lambda_2}(\vec{X}_h),\label{eq:opt_model_L2_org}\\
\text{($L^1$~model)}:&\quad~
\min_{\vec{X}_h} \norm{\vec{X}_h - \overline{\vec{U}_h}}{L^1} + \iota_{\Lambda_1}(\vec{X}_h) + \iota_{\Lambda_2}(\vec{X}_h),\label{eq:opt_model_L1_org}
\end{align}
where the two closed convex sets 
\begin{align}
\Lambda_1 = \{\vec{X}_h\!: \int_\Omega \vec{X}_h = \int_\Omega \vec{U}_h\},\qquad \Lambda_2 = \{\vec{X}_h\!: \vec{X}_h|_{K_i}\in G^\varepsilon,~\forall~i\},
\end{align}
are associated to conservation and invariant-domain-preserving constraints.

For solving \eqref{eq:opt_model_L2_org} and \eqref{eq:opt_model_L1_org}, which are a composite of a few convex closed proper functions, the scalability of first order operator splitting methods suits well for large problems. 
For two operator splitting convex optimization scheme, the most robust splitting is the Douglas-Rachford splitting (DRS), which was first introduced in 1950s for solving heat equations in two dimensions \cite{peaceman1955numerical,douglas1956numerical}. 
In 1979, Lions and Mercier extended DRS to composite convex optimization \cite{lions1979splitting}. It has been well known that quite a few popular splitting methods such as the alternating direction method of multipliers (ADMM) and the dual split Bregman method can be exactly equivalent to DRS under proper choices of parameters. 
See \cite{demanet2016eventual,torres2025asymptotic} and references therein for the exact equivalent relation. 
In \cite{raguet2019note,raguet2013generalized}, it was shown that DRS and forward-backward splitting can be integrated, followed by its extension in \cite{briceno2015forward}. Such a method was proven convergent by Davis and Yin \cite{davis2017three}, and such a three operator splitting is also referred to as Davis-Yin splitting (DYS).
\par

We emphasize that it is critical to design solvers to efficiently solve  \eqref{eq:introduction_opt_model}, which is needed in each time step for solving a time-dependent PDE. 
The  DRS in \cite{lions1979splitting} has been shown to be   efficient for solving optimization-based limiters for enforcing global conservation and bounds of scalar variables in high order DG schemes solving challenging PDEs including Cahn-Hilliard-Navier-Stokes system \cite{liu2024simple}, compressible Navier-Stokes \cite{LZ2024CNS}, and highly anisotropic diffusion \cite{LHTZ2024FP}.  
In the rest of the paper, we focus on how to construct splitting methods to solve \eqref{eq:opt_model_L2_org} and \eqref{eq:opt_model_L1_org} for invariant-domain-preserving of a vector variable $\vec{U}$, which is a more difficult problem than the simpler problem of enforcing bounds or positivity for a scalar variable in 
\cite{liu2024simple,LZ2024CNS,LHTZ2024FP}. 

\paragraph{\bf Two-operator Douglas-Rachford splitting}

\par
Let $\partial{g}$ and $\partial{h}$ be the subdifferentials of convex functions $g$ and $h$. 
Let $\mathrm{I}$ denote an identity operator. The proximal operator with a parameter $\gamma>0$ for the convex function $g$ is defined as 
\begin{align}
\prox_g^\gamma(\vec{X}) = (\mathrm{I}+\gamma \partial{g})^{-1}(\vec{X})=\mathrm{argmin}_{\vec{Z}}  g(\vec{Z}) + \frac{1}{2\gamma}\norm{\vec{Z}-\vec{X}}{2}^2,
\end{align}
and $\prox_h^\gamma$ is defined similarly.
To find a minimizer $\vec{X}^\ast$ of $g(\vec{X}) + h(\vec{X})$, the generalized DRS iteration with a step size $\gamma>0$ and a relaxation parameter $\lambda\in(0,2)$ is given by:
\begin{align}\label{eq:DR_algorithm}
\text{(DRS)}~
\begin{cases}
\vec{Y}^{k+1} &\hspace{-0.75em}= \lambda\,\prox_g^\gamma(2\vec{X}^k - \vec{Y}^k) + \vec{Y}^k - \lambda\vec{X}^k, \\
\vec{X}^{k+1} &\hspace{-0.75em}= \prox_h^\gamma(\vec{Y}^{k+1}),
\end{cases}
\end{align}
in which $\vec{Y}$ is only an auxiliary variable, and $\vec{X}^k$ will converge to the minimizer for any fixed $\gamma>0$ and   $\lambda\in(0,2)$ if the two functions $g$ and $h$ are convex closed proper, e.g., $L^1$ function and indicator functions of a closed convex set. 
If one of the functions is also strongly convex, then \eqref{eq:DR_algorithm} also converges for $\lambda = 2$, and
the special case  $\lambda = 2$ is also called Peaceman-Rachford splitting. 
 For using DRS for solving \eqref{eq:opt_model_L2_org} for enforcing bounds of a scalar variable,  in \cite{liu2024simple} the asymptotic linear convergence of \eqref{eq:DR_algorithm} has been analyzed, from which a simple formula for selecting optimal parameters $\gamma$ and $\lambda$ is derived, but neither the analysis nor the parameter formula in \cite{liu2024simple} can be extended to the vector variable case for the invariant domain \eqref{invariant-domain}.

\paragraph{\bf Three-operator Davis-Yin splitting}
For solving a problem of the form $\min_{\vec{X}}f(\vec{X}) + g(\vec{X}) + h(\vec{X})$,  DYS in \cite{davis2017three}  is given by:
\begin{align}\label{eq:DY_algorithm}
\text{(DYS)}~
\begin{cases}
\vec{X}^{k+1/2} &\hspace{-0.75em}= \prox_g^\gamma(\vec{Z}^k),\\
\vec{X}^{k+1} &\hspace{-0.75em}= \prox_f^\gamma\big(2\vec{X}^{k+1/2} - \vec{Z}^{k} - \gamma\grad{h}(\vec{X}^{k+1/2})\big),\\
\vec{Z}^{k+1} &\hspace{-0.75em}= \vec{Z}^{k} + \vec{X}^{k+1} - \vec{X}^{k+1/2}.
\end{cases}
\end{align}
For proper closed convex functions $f$, $g$, and $h$, where $\grad{h}$ is Lipschitz continuous with the Lipschitz constant $L$, iteration \eqref{eq:DY_algorithm} converges for any constant step size $\gamma\in(0, 2/L)$. 
We remark that three-block ADMM methods can also be used here, but 
in practice DYS $\gamma=\frac{1}{L}$ performs much better than all other alternatives, see a numerical comparison for solving \eqref{eq:opt_model_L2_org} for enforcing bounds of a scalar variable in \cite{anshika2024three}.

\subsection{The main results of this paper}

For solving \eqref{eq:opt_model_L2_org}, both DRS \eqref{eq:DR_algorithm} and DYS  \eqref{eq:DY_algorithm} can be used, if all the proximal operators are available.
 For the indicator function $\iota_\Omega$ for a closed convex set $\Omega$, its proximal operator is simply the Euclidean projection to the set $\Omega$. To implement DRS and DYS, the only nontrivial operator is the projection onto the invariant domain. Even though the definition of the set $G^\varepsilon$ in \eqref{invariant-domain} seems simple, its projection formula is no longer straightforward. The first key result of this paper is to derive an efficient explicit projection formula  for convex set $G^\varepsilon$ in \eqref{invariant-domain}, with which 
both DRS \eqref{eq:DR_algorithm} and DYS  \eqref{eq:DY_algorithm} can be easily implemented for solving \eqref{eq:opt_model_L2_org}. We are able to derive the projection formula as a cubic root. For the ease of presentation, we list the projection algorithms in \ref{sec:1D_proj} to \ref{sec:3D_proj}.

For solving \eqref{eq:opt_model_L1_org}, neither DRS \eqref{eq:DR_algorithm} nor DYS  \eqref{eq:DY_algorithm} can be directly used. As will be shown in this paper, \eqref{eq:opt_model_L2_org} can be regarded as a subproblem in DRS solving \eqref{eq:opt_model_L1_org}, and this subproblem can be solved efficiently by DYS  \eqref{eq:DY_algorithm}. In other words, we propose to use DRS nested with DYS for solving  \eqref{eq:opt_model_L1_org}, which is an efficient way for solving $L^1$-norm optimization-based invariant-domain-preserving limiter. 

Our numerical results indicate that the $L^1$-norm limiter \eqref{eq:opt_model_L1_org} is not better than the $L^2$-norm limiter \eqref{eq:opt_model_L2_org} in terms of sparsity of the minimizer, unlike   $\ell^1$-norm minimizations in many other applications. Moreover, the $L^2$-norm limiter \eqref{eq:opt_model_L2_org} is cheaper to solve, and it improves the accuracy of the DG solution,  as shown in Theorem \ref{theorem-limiter} in Section \ref{sec:limiter_Euler}. 
On the other hand, as will be shown in our numerical tests, for certain important time dependent problems, the $L^1$-norm limiter \eqref{eq:opt_model_L1_org} may be more desirable. For instance, as shown in Figure \ref{fig:astrophysical_jet_density} for the high speed astrophysical jet problem, the optimization-based limiter is triggered less during the time evolution if using  $L^1$-norm limiter \eqref{eq:opt_model_L1_org}, compared to  using $L^2$-norm limiter \eqref{eq:opt_model_L2_org}.

\subsection{Related work}

In the past 15 years, there have been active research on constructing  invariant-domain-preserving limiters  for high order schemes solving hyperbolic and related systems.
One popular approach introduced by Zhang and Shu \cite{zhang2010maximum,zhang2010positivity,wu2025high} is to only modify the approximation polynomial by a simple scaling limiter to eliminate overshoots and undershoots at certain quadrature points, with which DG and finite volume type schemes using strong stability preserving (SSP) time discretizations can ensure the cell averages to be invariant-domain-preserving. See also \cite[Appendix~C]{zhang2017positivity} for the accuracy proof of this limiter.
Another widely used approach is the flux corrected transport (FCT) type method \cite{zalesak1979fully}, which is to modify the high order numerical flux by a convex combination with a low order flux \cite{guermond2017invariant,guermond2018second,kuzmin2020monolithic,hajduk2021monolithic}. The FCT is flexible and applicable to a broad range of spatial discretizations. Recent developments include convex limiting  and subcell convex limiting \cite{hajduk2021monolithic,pazner2021sparse,rueda2022subcell,lin2024high}, etc. It is in general  difficult to establish rigorous accuracy proof for FCT type methods, though the accuracy for the 1D linear scalar case can be proven \cite{xu2014parametrized}.
All these approaches rely on existence of a low order invariant-domain-preserving numerical flux, which is however not always available for complicated systems, e.g., higher order PDEs like phase-field equations \cite{liu2024simple}.
\par

As a more flexible alternative to the traditional methods above, optimization-based limiters like  \eqref{eq:introduction_opt_model} can be considered,
and it is mostly studied for a scalar variable, for which the admissible set $G = [m,M]$ means bound-preserving, or $G = [0,+\infty]$ means positivity-preserving.
The same or similar optimization-based limiters  for scalar quantities have been considered under different contexts, e.g., \cite{guba2014optimization,bochev2012constrained,bradley2019communication,yee2020quadratic,bochev2020optimization,peterson2024optimization}.
See also \cite{van2019positivity,ruppenthal2023optimal} for different optimization-based approaches for enforcing bounds.  In \cite{liu2024simple,LZ2024CNS,LHTZ2024FP}, 
 the optimization-based limiter was considered for some challenging equations and systems but the limiter was only applied to scalar variables. 

To the best of our knowledge, all these existing optimization-based approaches consider the treatment of a scalar variable, rather than directly tackling the invariant domain set $G^\varepsilon$ defined in \eqref{invariant-domain}. On the other hand, if $\vec{X}$ is a scalar variable in \eqref{eq:introduction_opt_model}, it has been proven in \cite{bradley2019communication} that the unique minimizer to the $L^2$ minimization \eqref{eq:opt_model_L2_org} is one of the minimizers to the $L^1$ minimization \eqref{eq:opt_model_L2_org}, and an explicit cheap construction to one of the $L^1$ minimizers to \eqref{eq:opt_model_L1_org} was given in  \cite{bradley2019communication}, which will be reviewed in Section~\ref{sec:scalar}. In other words, for a scalar variable $\vec{X}$ in  $L^1$ minimization \eqref{eq:opt_model_L1_org}, the best solution is the explicit construction of one particular minimizer in  \cite{bradley2019communication}, which is however not a minimizer to \eqref{eq:opt_model_L2_org}, and one gets the accuracy justification in \eqref{accuracy-proof} under $L^1$-norm. For using  $L^2$ minimization  \eqref{eq:opt_model_L2_org} for a scalar variable $\vec{X}$, in the literature there are many efficient solvers that were used, for which however the computational complexity is usually not quantified. For solving 
$L^2$ minimization  \eqref{eq:opt_model_L2_org} for a scalar variable $\vec{X}$, when using the optimal parameters given in \cite{liu2024simple},  DRS has a complexity around $80 N$, if the bad cell ratio is small, which is the case for many good schemes solving many practical problems.  

 Among all the efficient solvers considered and used for  \eqref{eq:opt_model_L2_org} for a scalar variable, only the operator splitting methods can be easily extended to enforcing the invariant domain for a vector variable. 
 For example, it was shown in \cite{bochev2013fast}
 that \eqref{eq:opt_model_L2_org} for scalar variables
 can be efficiently solved by the solver in  \cite{dai2006new}, which was designed for enforcing bounds  rather than an invariant domain like \eqref{invariant-domain}.

\subsection{Contributions and organization of the paper}

In this paper, we provide a simplified explicit formula for projecting a point onto the admissible set \eqref{invariant-domain} for gas dynamics equations such as compressible Euler and NS equations.  We further use this projection to construct 
efficient operator splitting schemes for solving 
optimization-based post-processing limiters  \eqref{eq:opt_model_L2_org}
and  \eqref{eq:opt_model_L1_org}.
To the best of our knowledge, this is the first study on constructing optimization-based limiters for directly enforcing 
the invariant domain  \eqref{invariant-domain},
and this is also the first time to use in this context a three-operator splitting scheme like DYS, which performs quite well numerically for solving $L^2$-norm limiter  \eqref{eq:opt_model_L2_org}.
  The proposed invariant-domain-preserving limiter also works in enforcing positivity of cell averages in DG methods, even if the time discretization is not SSP type, as will be shown in our numerical tests.  
\par
The rest of this paper is organized as follows. In Section~\ref{sec:scalar}, we review some existing results then explain how to design splitting methods for solving the $L^2$ and $L^1$ optimization-based limiter for scalar variables. In Section~\ref{sec:limiter_Euler}, we describe the designed splitting methods for optimization-based limiters for the enforcing invariant domain. DYS method is used for solving the $L^2$-norm invariant-domain-preserving limiter, and DRS nested with DYS can be used for  the $L^1$-norm invariant-domain-preserving limiter.
Both limiters involve the projection to the invariant domain, and formulae are given in the Appendix. Numerical tests are shown in Section~\ref{sec:numerical_experiments}. Concluding remarks are given in Section~\ref{sec:concluding_remarks}.

%%%%%%%%%%%%%%%%%%%%%%%%%%%%%%%%%%%%%%%%%%%%%%%%%%%%%%%%%%%%%%%%%%%%%%%%%%%%%%%%%%%%%%%%%%%%%%%%%%%%%%%%%%%%%%%%%%%%%%%
\section{Bound-preserving limiters for scalar conservation laws}\label{sec:scalar}
%%%%%%%%%%%%%%%%%%%%%%%%%%%%%%%%%%%%%%%%%%%%%%%%%%%%%%%%%%%%%%%%%%%%%%%%%%%%%%%%%%%%%%%%%%%%%%%%%%%%%%%%%%%%%%%%%%%%%%%

In this section, we first review the result in \cite{bradley2019communication}, then discuss splitting methods for  preserving bounds of scalar variable, i.e., the invariant domain is $G = [m, M]$, where $m$ and $M$ are the lower and upper bounds satisfied by the exact or physical solutions.  

\subsection{The problem setup and existing results for the scalar case}
 For a scalar-valued function $u$ in solving a scalar equation, a DG scheme   may violate the  bound-preserving property with cell averages out of the interval $G = [m, M]$, which may occur when using a high order scheme with a large time step size or a high order accurate implicit scheme.
The $L^2$ model \eqref{eq:opt_model_L2_org} enforces global conservation and bounds without compromising high order accuracy, see \cite{LHTZ2024FP}.
\par 
We introduce a vector $\vec{u}\in\IR^N$ to represent cell averages of the scalar-valued DG polynomial solution $U_h$, that is, the $i$-th entry of $\vec{u}$ equals $\overline{U_h}|_{K_i} = \frac{1}{\abs{K_i}}\int_{K_i} U_h$. 
Define a matrix $\vecc{A}=[1~1~\cdots~1]\in\IR^{1\times N}$ and $b = \vecc{A}\vec{u}$.
Then, the limiter \eqref{eq:opt_model_L2_org} is equivalent to the following  problem with a parameter $\alpha > 0$ in matrix-vector form:
\begin{align}\label{eq:opt_model_l2_scalar}
\min_{\vec{x}\in\IR^N}&\, \frac{1}{2\alpha}\norm{\vec{x}-\vec{u}}{2}^2 + \iota_{\Lambda_1}(\vec{x}) + \iota_{\Lambda_2}(\vec{x}),\nonumber\\
\text{where}~~& \Lambda_1 = \{\vec{x}\in\IR^N: \vecc{A}\vec{x} = b\}, \\
\text{and}~~& \Lambda_2 = \{\vec{x}\in\IR^N: x_i \in [m, M],~ \forall i = 1,\cdots, N\}.\nonumber
\end{align}
Notice that we have a freedom to choose a different $\alpha>0$, which may affect performance of solvers for \eqref{eq:opt_model_l2_scalar}.
Similarly, the $L^1$ model \eqref{eq:opt_model_L1_org} for a scalar variable with $G=[m,M]$ is equivalent to
\begin{align}\label{eq:opt_model_l1_scalar}
\min_{\vec{x}\in\IR^N}&\, \norm{\vec{x}-\vec{u}}{1} + \iota_{\Lambda_1}(\vec{x}) + \iota_{\Lambda_2}(\vec{x}),\nonumber\\
\text{where}~~& \Lambda_1 = \{\vec{x}\in\IR^N: \vecc{A}\vec{x} = b\} \\
\text{and}~~& \Lambda_2 = \{\vec{x}\in\IR^N: x_i \in [m, M],~ \forall i = 1,\cdots, N\}.\nonumber
\end{align}
By \cite[Proposition 4.1]{bradley2019communication}, the unique minimizer to the strongly convex $\ell^2$ problem \eqref{eq:opt_model_l2_scalar} also minimizes the $\ell^1$ problem  \eqref{eq:opt_model_l1_scalar}, which however has multiple minimizers.
Moreover, it is proven in \cite{bradley2019communication} that
one particular $\ell^1$ minimizer can be constructed explicitly by $\mathtt{ClipAndAssuredSum}$ (\cite[Algorithm~3.1]{bradley2019communication}):
\begin{subequations}\label{eq:ClipAndAssuredSum}
\begin{align}
\mathtt{clip}(\vec{u}) &= \min{(\max{(\vec{u}, m), M})},\\
\vec{d} &= \begin{cases}
\frac{M\vec{1} - \mathtt{clip}(\vec{u})}{MN - \vec{1}\cdot\mathtt{clip}(\vec{u})}, & \text{if}~\vec{1}\cdot\mathtt{clip}(\vec{u}) - b < 0,\\
\vec{0}, & \text{if}~\vec{1}\cdot\mathtt{clip}(\vec{u}) - b = 0,\\
\frac{\mathtt{clip}(\vec{u}) - m\vec{1}}{\vec{1}\cdot\mathtt{clip}(\vec{u}) - mN}, & \text{if}~\vec{1}\cdot\mathtt{clip}(\vec{u}) - b > 0,
\end{cases}\\
\vec{x}^\ast &= \mathtt{clip}(\vec{u}) - (\vec{1}\cdot\mathtt{clip}(\vec{u}) - b)\vec{d}.
\end{align}
\end{subequations}
If one wants an $\ell^1$ minimizer, the $\mathtt{ClipAndAssuredSum}$ is the best solution because of its low cost. On the other hand, the $\vec{x}^\ast$ in \eqref{eq:ClipAndAssuredSum} is not necessarily an $\ell^2$ minimizer. Moreover, it seems quite difficult to extend the proof and results in  \cite{bradley2019communication}  to solving \eqref{eq:opt_model_L1_org} in the context of the invariant domain \eqref{invariant-domain}.  

We emphasize that the minimizers to \eqref{eq:opt_model_l1_scalar} are usually not sparse, unlike many other $\ell^1$ minimization problems in other applications. In terms of sparsity,  the minimizers to $\ell^1$ minimization \eqref{eq:opt_model_l1_scalar} are often only marginally better than the minimizer to  $\ell^2$ minimization \eqref{eq:opt_model_l2_scalar}.

\begin{remark}
The objective function of the $\ell^1$ minimization problem \eqref{eq:opt_model_l1_scalar} is convex but not strongly convex, which implies the non-uniqueness of the minimizer.
To see this, let us consider the following simple example with four variables:
\begin{subequations}\label{eq:simple_model}
\begin{align}
\min_{x_1, x_2, x_3, x_4}~ &\abs{x_1 - 1} + \abs{x_2 - 1} + \abs{x_3 - 2} + \abs{x_4 - 2.1} \label{eq:simple_model_1a}\\
\mathrm{subject~to}~ &x_1 + x_2 + x_3 + x_4 = 6.1 
~\mathrm{and}~ x_i \in [1,2]~\mathrm{for}~i = 1, \cdots, 4. \label{eq:simple_model_1b}
\end{align}
\end{subequations}
The vector $\transpose{[x_1, x_2, x_3, x_4]} = \transpose{[1, 1.1, 2, 2]}$ is a minimizer, which yields the objective function value $0.2$. And $\transpose{[1.05, 1.05, 2, 2]}$ achieves the same objective value, i.e., it is also a minimizer.
When interpreting the values $\transpose{[1,1,2,2.1]}$ in \eqref{eq:simple_model_1a} as cell averages, the first minimizer $\transpose{[1, 1.1, 2, 2]}$ modifies fewer cells (two cells) than the second minimizer $\transpose{[1.05, 1.05, 2, 2]}$, which modifies three.
In this simple example \eqref{eq:simple_model}, both $\mathtt{ClipAndAssuredSum}$ and splitting methods listed in this section produce the second minimizer, which is not the ``sparsest solution'' in the sense of modifying the fewest number of cells.
\end{remark}

\subsection{Davis-Yin splitting for $\ell^2$ minimization}

Both sets $\Lambda_1$ and $\Lambda_2$ in \eqref{eq:opt_model_l2_scalar} are convex and closed, which implies the objective function of \eqref{eq:opt_model_l2_scalar} is a proper closed strongly convex function and thus has a unique minimizer.
In \cite{LHTZ2024FP}, DRS \eqref{eq:DR_algorithm} was used to solve \eqref{eq:opt_model_l2_scalar} with optimal parameters. Here we provide an easier alternative approach based on DYS method, which serves as an important component in constructing the splitting method for $L^1$ model in this paper. 
To apply DYS \eqref{eq:DY_algorithm}, we choose 
\begin{align}\label{eq:scalar_func_fgh}
f(\vec{x}) = \iota_{\Lambda_1}(\vec{x}), \quad 
g(\vec{x}) = \iota_{\Lambda_2}(\vec{x}), \quad\text{and}\quad
h(\vec{x}) = \frac{1}{2\alpha}\norm{\vec{x}-\vec{u}}{2}^2. 
\end{align}
Let $\vecc{A}^+ = \transpose{\vecc{A}}(\vecc{A}\transpose{\vecc{A}})^{-1}$ denote the pseudo inverse of the matrix $\vecc{A}$.  The proximal operator for the function $f$ defined in \eqref{eq:scalar_func_fgh} is given by
\begin{align}\label{eq:scalar_DY_prox_f}
\prox_f^\gamma(\vec{x}) = \vecc{A}^+(b - \vecc{A}\vec{x}) + \vec{x}.
\end{align}
The proximal operator of an indicator of a set is the Euclidean projection onto that set. Thus, the $\prox_g^\gamma$ projects each point $x_i$, $i=1,\cdots,N$, onto the admissible set $[m,M]$, i.e., it is the cut-off operation:
\begin{align}\label{eq:scalar_DY_prox_g}
[\prox_g^\gamma(\vec{x})]_i = \min{(\max{(x_i, m)},M)},\quad \forall i = 1, \cdots, N.
\end{align}
Here, the subscript $i$ denotes the $i$-th component in corresponding vector.
In addition, we have
\begin{align}\label{eq:scalar_DY_prox_h}
\grad{h} = \frac{1}{\alpha}(\vec{x} - \vec{u})
\quad\text{and}\quad
\prox_h^\gamma(\vec{x}) =
\frac{\alpha}{\gamma+\alpha}\vec{x}
+
\frac{\gamma}{\gamma+\alpha}\vec{u}
\end{align}
Substituting \eqref{eq:scalar_DY_prox_f}-\eqref{eq:scalar_DY_prox_h} into \eqref{eq:DY_algorithm} and taking the step size $\gamma =\frac{1}{L}=\alpha$, where $L=\frac{1}{\alpha}$ is the Lipschitz constant of $\grad{h}$, we obtain:
\begin{subequations}\label{eq:DY_l2_implement_formula}
\begin{align}
\vec{X}^{k+1/2} &= \min{(\max{(\vec{Z}^k, m)},M)}, \label{eq:DY_l2_implement_formula_1}\\
\vec{Y}^{k+1} &= \vec{X}^{k+1/2} - \vec{Z}^{k} + \vec{u},\label{eq:DY_l2_implement_formula_2}\\
\vec{X}^{k+1} &= \vecc{A}^+(b - \vecc{A}\vec{Y}^{k+1}) + \vec{Y}^{k+1},\label{eq:DY_l2_implement_formula_3}\\
\vec{Z}^{k+1} &= \vec{Z}^{k} + \vec{X}^{k+1} - \vec{X}^{k+1/2}.\label{eq:DY_l2_implement_formula_4}
\end{align}
\end{subequations}
In \eqref{eq:DY_l2_implement_formula_1}, the $\min$ and $\max$ denote entrywise operation.
We initialize the algorithm above by setting $\vec{Z}^{0} = \vec{u}$. To terminate the iteration, we can choose a small positive tolerance $\epsilon$ near machine zero and employ the stopping criterion $\norm{\vec{Z}^{k+1}-\vec{Z}^{k}}{2h} < \epsilon$, where norm $\norm{\cdot}{2h} = h^{d/2}\norm{\cdot}{2}$.
\par
A practical advantage of the DYS \eqref{eq:DY_l2_implement_formula} is that it does not require any parameter tuning. Although an optimal parameter selection guideline exists for DRS in the scalar cases \cite{liu2024simple}, no established principle is currently known for choosing parameters in a DRS when applied to an invariant domain \eqref{invariant-domain}. In such cases, a parameter-free DYS is preferable. In practice, DYS with the step size
$\gamma=\frac{1}{L}$ is empirically quite efficient, often converges to machine zero accuracy within a few iterations.
See a numerical comparison of DYS with other similar methods for solving \eqref{eq:opt_model_l2_scalar} in \cite{anshika2024three}. 
Thus, for compressible Euler equations, we propose using the DYS method to solve the $L^2$ minimization problem \eqref{eq:opt_model_L2_org}, as will be shown in Section~\ref{sec:limiter_Euler}. 

\subsection{Douglas-Rachford splitting for $\ell^1$ minimization only for scalar variables}

We first consider the DRS method \eqref{eq:DR_algorithm} to solve $\ell^1$ minimization  \eqref{eq:opt_model_l1_scalar}, with functions $g$ and $h$ chosen as follows:
\begin{align}\label{eq:scalar_l1_gh_alter}
g(\vec{x}) = \norm{\vec{x}-\vec{u}}{1} + \iota_{\Lambda_2}(\vec{x}) \quad\text{and}\quad
h(\vec{x}) = \iota_{\Lambda_1}(\vec{x}).
\end{align}
The proximal of $h$ above is the same as \eqref{eq:scalar_DY_prox_f}. As shown in \ref{appendix:compute_prox},
the proximal operator  of $g$ is,
\begin{subequations}
\begin{align}
[\prox_g^\gamma(\vec{x})]_i &= \max\{\min\{u_i + \mathrm{S}_\gamma(x_i-u_i), M\}, m\},\\
\mathrm{S}_\gamma(a) &= \mathrm{sgn}(a)\max\{\abs{a}-\gamma, 0\}.
\end{align}
\end{subequations}
The DRS method can easily be written out with these explicit formulae of proximal operators, and it converges with any positive step size $\gamma>0$ and any relaxation parameter $\lambda\in(0,2)$.

However, when extending from $G=[m, M]$ to \eqref{invariant-domain} for systems of conservation laws such as the Euler equations,
the formula above and its derivation no longer hold, and there is no simple analytical expression for the proximal operator when combining the $\ell^1$ norm with an invariant-domain-preserving constraint \eqref{invariant-domain}.

\subsection{Douglas-Rachford splitting combined with Davis-Yin splitting for $\ell^1$ minimization}
 
We now consider the DRS method \eqref{eq:DR_algorithm} to solve $\ell^1$ minimization  \eqref{eq:opt_model_l1_scalar}, with functions $g$ and $h$ chosen as follows:
\begin{align}\label{eq:scalar_l1_gh}
g(\vec{x}) = \norm{\vec{x}-\vec{u}}{1} \quad\text{and}\quad
h(\vec{x}) = \iota_{\Lambda_1}(\vec{x}) + \iota_{\Lambda_2}(\vec{x}).
\end{align}
Define a shrinkage operator $\mathrm{S}_\gamma(x) = \mathrm{sgn}(x)\max\{\abs{x}-\gamma, 0\}$. Associated with the function $g$ above, the proximal is given by
\begin{align}\label{eq:scalar_l1_prox_g}
[\prox_g^\gamma(\vec{x})]_i 
= u_i + \mathrm{S}_\gamma(x_i - u_i)
= u_i + \mathrm{sgn}(x_i - u_i)\max\{\abs{x_i - u_i} - \gamma, 0\},
\end{align}
where the subscript $i$ denotes the $i$-th component in corresponding vector.
The proximal of the function $h$ in \eqref{eq:scalar_l1_gh} lacks a tractable analytical form. To overcome this difficulty, we design a nested approach, where the inner DYS iteration computes $\prox_h^\gamma$ numerically. 
Notice that we have the following relation by definition of a proximal operator,  
\begin{align}\label{eq:scalar_l1_inner_problem}
\prox_h^\gamma(\vec{x})=\operatorname{argmin}_{\vec{z}\in\IR^N}\, \frac{1}{2\gamma}\norm{\vec{z}-\vec{x}}{2}^2 + \iota_{\Lambda_1}(\vec{z}) + \iota_{\Lambda_2}(\vec{z}),
\end{align}
which is nothing but the   problem \eqref{eq:opt_model_l2_scalar}.
 Therefore, to implement the proximal operator \eqref{eq:scalar_l1_inner_problem},  it can be solved numerically by applying DYS \eqref{eq:DY_l2_implement_formula}. After obtaining $\prox_h^\gamma$, we use the DRS \eqref{eq:DR_algorithm} to compute a minimizer of \eqref{eq:opt_model_l1_scalar}.
We initialize the algorithm above by setting $\vec{Y}^{0} = \vec{u}$ and $\vec{X}^{0} = \prox_g^\gamma(\vec{Y}^{0})$. To terminate the iteration, we choose a small positive tolerance $\epsilon$ and employ the stopping criterion $\norm{\vec{Y}^{k+1}-\vec{Y}^{k}}{2h} < \epsilon$.

In other words, this is a nested method with DRS as an outer loop and DYS as the inner loop for the subproblem \eqref{eq:scalar_l1_inner_problem}. This approach can be easily extended to the invariant domain \eqref{invariant-domain}.

%%%%%%%%%%%%%%%%%%%%%%%%%%%%%%%%%%%%%%%%%%%%%%%%%%%%%%%%%%%%%%%%%%%%%%%%%%%%%%%%%%%%%%%%%%%%%%%%%%%%%%%%%%%%%%%%%%%%%%%
\section{An efficient implementation of invariant-domain-preserving limiters by splitting methods}\label{sec:limiter_Euler}
%%%%%%%%%%%%%%%%%%%%%%%%%%%%%%%%%%%%%%%%%%%%%%%%%%%%%%%%%%%%%%%%%%%%%%%%%%%%%%%%%%%%%%%%%%%%%%%%%%%%%%%%%%%%%%%%%%%%%%%
As an example, consider solving compressible Euler equations in a bounded spatial domain $\Omega\subset\IR^d$ over the time interval $t \in [0,T]$. Let $\vec{U} = \transpose{[\rho,\transpose{\vec{m}},E]}$ denote the conservative variables for density, momentum, and total energy. The  compressible Euler equations are
\begin{align}\label{eq:compressible_Euler}
\partial_t{\vec{U}} + \div{\vec{F}^\mathrm{a}}(\vec{U}) = \vec{0},
\quad\text{where}\quad 
\vec{F}^\mathrm{a} = \begin{bmatrix}
\vec{m} \\
\frac{1}{\rho}\vec{m}\otimes\vec{m}+ p \vecc{I}\\
(E+p)\frac{\vec{m}}{\rho}
\end{bmatrix}.
\end{align}
The total energy is $E = \rho e + \frac{\norm{\vec{m}}{2}^2}{2\rho}$, where $e$ denotes the internal energy. %and $\norm{\cdot}{2}$ is the vector $2$-norm.
For the ideal gas, the equation of state is $p = (\gamma-1)\rho e$, where $\gamma>1$ is a constant, e.g., $\gamma = 1.4$ for air.
When vacuum does not occur, a physically meaningful solution to \eqref{eq:compressible_Euler} should have positive density and positive internal energy, namely it should stay in the admissible set $G$.
We will discuss optimization-based limiters for enforcing a numerical invariant domain $G^\varepsilon$ in \eqref{invariant-domain}.

\subsection{An $\ell^2$ minimization  invariant-domain-preserving limiter for  cell averages}
High order explicit schemes with large time steps, as well as implicit schemes, may produce negative cell averages. 
We propose the optimization-based approach that incorporates preserving global conservation and invariant domain as constraints to postprocess the cell averages in DG solution. 
\par
In limiter \eqref{eq:opt_model_L2_org}, the vector-valued piecewise constant polynomial $\vec{X}_h$ minimizes the $L^2$ distance to the cell average of the DG polynomial $\vec{U}_h = \transpose{[\rho_h, \transpose{\vec{m}_h}, E_h]}$. 
We introduce a matrix $\overline{\vecc{U}} \in \IR^{N\times(2+d)}$ to represent cell averages of the DG polynomial $\vec{U}_h$, that is, the $i$-th row of $\overline{\vecc{U}}$ equals \begin{align}
\transpose{\overline{\vec{U}_h}}|_{K_i} = \begin{bmatrix}
\displaystyle \frac{1}{\abs{K_i}}\int_{K_i} \rho_h~~ & 
\displaystyle \frac{1}{\abs{K_i}}\int_{K_i} \transpose{\vec{m}_h}~~ & 
\displaystyle \frac{1}{\abs{K_i}}\int_{K_i} E_h
\end{bmatrix}.
\end{align}
Then, model \eqref{eq:opt_model_L2_org} is equivalent to the following unconstrained minimization problem in matrix-vector form:
\begin{align}\label{eq:invariant_domain_limiter2}
\min_{\vecc{X}\in\IR^{N\times(2+d)}}& \frac{1}{2\alpha}\norm{\vecc{X} - \overline{\vecc{U}}}{F}^2 + \iota_{\Lambda_1}(\vecc{X}) + \iota_{\Lambda_2}(\vecc{X}),\nonumber\\
\text{where}~~&
\Lambda_1 = \{\vecc{X}\!:~ \vecc{A}\vecc{X} = \transpose{\vec{b}}\}\\
\text{and}~~&
\Lambda_2 = \{\vecc{X}\!:~ \text{the $i$-th row}~\transpose{\vecc{X}_i}\in G^\varepsilon,~ \forall i\}.\nonumber
\end{align}
Here, $\norm{\cdot}{F}$ is the matrix Frobenius norm.
On a uniform mesh, we have $\vecc{A} = [1,1,\cdots,1]\in\IR^{1\times N}$ and $\transpose{\vec{b}} = \vecc{A}\overline{\vecc{U}}$. 
The closed convex sets $\Lambda_1$ and $\Lambda_2$ represent global conservation and invariant-domain-preserving constraints, respectively. 
The functions $\iota_{\Lambda_1}$ and $\iota_{\Lambda_2}$ are convex, indicating that \eqref{eq:invariant_domain_limiter2} is a strongly convex minimization problem. Therefore, the solution of \eqref{eq:invariant_domain_limiter2} is unique.
\par
When all cell averages of the exact solution belong to the numerical admissible set, provided that $\varepsilon$ is sufficiently small.
The limiter \eqref{eq:invariant_domain_limiter2} improves the accuracy of the DG solution in the following sense:
\begin{theorem}
\label{theorem-limiter}
Let $\vecc{X}^\ast$ denote the minimizer of \eqref{eq:invariant_domain_limiter2} and let $\overline{\vecc{U}^\mathrm{exact}}\in \IR^{N\times (2+d)}$ be a matrix storing the cell averages of an exact solution. If the DG solution has the same integrals over the whole domain as the exact solution, and the DG cell averages violate the invariant domain (i.e., $\overline{\vecc{U}}\notin \Lambda_2$), then the minimizer $\vecc{X}^\ast$ improves the accuracy:
\begin{align}
\norm{\vecc{X}^\ast - \overline{\vecc{U}^\mathrm{exact}}}{F} < \norm{\overline{\vecc{U}} - \overline{\vecc{U}^\mathrm{exact}}}{F}.
\end{align}
\end{theorem}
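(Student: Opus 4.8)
The plan is to recognize the minimizer as a Euclidean projection and then exploit the standard obtuse-angle property of projections onto a closed convex set. First I would observe that since $\iota_{\Lambda_1} + \iota_{\Lambda_2} = \iota_{\Lambda_1\cap\Lambda_2}$ and the positive scalar $\frac{1}{2\alpha}$ does not move the minimizer, the unique solution $\vecc{X}^\ast$ of \eqref{eq:invariant_domain_limiter2} is exactly the Frobenius-norm projection of $\overline{\vecc{U}}$ onto the closed convex set $C := \Lambda_1\cap\Lambda_2$. The two hypotheses are precisely what place $\overline{\vecc{U}^\mathrm{exact}}$ in $C$: the equal-integral assumption gives $\vecc{A}\,\overline{\vecc{U}^\mathrm{exact}} = \transpose{\vec{b}}$, hence $\overline{\vecc{U}^\mathrm{exact}}\in\Lambda_1$; and since each row is a cell average of an admissible exact state, concavity of $\rho e(\cdot)$ together with Jensen's inequality (the same argument that makes $G^\varepsilon$ convex) gives $\transpose{\vecc{X}_i}\in G^\varepsilon$ for every $i$, hence $\overline{\vecc{U}^\mathrm{exact}}\in\Lambda_2$. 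In particular $C\neq\emptyset$ and the projection is well defined.

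The core of the argument is the variational (obtuse-angle) characterization of the projection onto a closed convex set: for every $\vecc{Y}\in C$,
\[
\langle \overline{\vecc{U}} - \vecc{X}^\ast,\ \vecc{Y} - \vecc{X}^\ast\rangle \le 0,
\]
where $\langle\cdot,\cdot\rangle$ is the Frobenius inner product. I would apply this with $\vecc{Y}=\overline{\vecc{U}^\mathrm{exact}}\in C$ and then expand the squared error along the split $\overline{\vecc{U}}-\overline{\vecc{U}^\mathrm{exact}} = (\overline{\vecc{U}}-\vecc{X}^\ast) + (\vecc{X}^\ast-\overline{\vecc{U}^\mathrm{exact}})$, yielding
\[
\norm{\overline{\vecc{U}} - \overline{\vecc{U}^\mathrm{exact}}}{F}^2 = \norm{\overline{\vecc{U}} - \vecc{X}^\ast}{F}^2 - 2\langle \overline{\vecc{U}} - \vecc{X}^\ast,\ \overline{\vecc{U}^\mathrm{exact}} - \vecc{X}^\ast\rangle + \norm{\vecc{X}^\ast - \overline{\vecc{U}^\mathrm{exact}}}{F}^2.
\]
The middle term is nonnegative by the obtuse-angle inequality, so $\norm{\overline{\vecc{U}} - \overline{\vecc{U}^\mathrm{exact}}}{F}^2 \ge \norm{\overline{\vecc{U}} - \vecc{X}^\ast}{F}^2 + \norm{\vecc{X}^\ast - \overline{\vecc{U}^\mathrm{exact}}}{F}^2$, which is the Pythagorean (firm-nonexpansiveness) inequality for the projector.

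To upgrade to the strict inequality claimed in the theorem, I would use the remaining hypothesis $\overline{\vecc{U}}\notin\Lambda_2$. Since $C=\Lambda_1\cap\Lambda_2\subseteq\Lambda_2$, this forces $\overline{\vecc{U}}\notin C$, while $\vecc{X}^\ast\in C$; hence $\vecc{X}^\ast\neq\overline{\vecc{U}}$ and $\norm{\overline{\vecc{U}} - \vecc{X}^\ast}{F}^2 > 0$. Dropping this strictly positive term from the inequality above gives exactly $\norm{\vecc{X}^\ast - \overline{\vecc{U}^\mathrm{exact}}}{F} < \norm{\overline{\vecc{U}} - \overline{\vecc{U}^\mathrm{exact}}}{F}$.

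I expect the only genuinely delicate point to be verifying $\overline{\vecc{U}^\mathrm{exact}}\in\Lambda_2$, i.e. that the cell averages of the exact solution lie in the numerical invariant domain $G^\varepsilon$ rather than merely in the open physical set $G$. This is where the concavity of $\rho e$ and Jensen's inequality enter, and it implicitly requires the exact solution to respect the margin $\varepsilon$ (positive density and internal energy bounded below by $\varepsilon$) pointwise; away from vacuum this is the natural regime. Everything else reduces to the textbook projection inequality, so the proof is short once the problem is recast as a projection onto $C$.
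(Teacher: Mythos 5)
Your proof is correct and follows essentially the same route as the paper: both recast $\vecc{X}^\ast$ as the Frobenius-norm projection of $\overline{\vecc{U}}$ onto the closed convex set $\Lambda_1\cap\Lambda_2$, apply the variational (obtuse-angle) inequality with $\overline{\vecc{U}^\mathrm{exact}}$ as the test point, expand the squared norm, and obtain strictness from $\overline{\vecc{U}}\notin\Lambda_2$ forcing $\norm{\overline{\vecc{U}}-\vecc{X}^\ast}{F}>0$. The only difference is presentational: you invoke the textbook projection inequality directly, while the paper derives it inline by minimizing the quadratic $\phi(\eta)$ along the segment joining $\vecc{X}^\ast$ and $\overline{\vecc{U}^\mathrm{exact}}$ (and your remark that $\overline{\vecc{U}^\mathrm{exact}}\in\Lambda_2$ tacitly requires the exact solution to respect the margin $\varepsilon$ is a fair observation, which the paper also leaves implicit).
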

\begin{proof}
For two matrices $\vecc{B} = [b]_{ij}$ and $\vecc{C} = [c]_{ij}$ of same sizes, define the inner product $\vecc{B}:\vecc{C} = \sum_{ij} b_{ij}c_{ij}$.
The sets $\Lambda_1$ and $\Lambda_2$ are convex and closed, thus $\Lambda_1\cap\Lambda_2$ is a convex closed set. Thus, $\overline{\vecc{U}^\mathrm{exact}}$ and $\vecc{X}^\ast$ belongs to $\Lambda_1\cap\Lambda_2$, which implies 
\begin{align}
\eta \overline{\vecc{U}^\mathrm{exact}} + (1-\eta)\vecc{X}^\ast \in \Lambda_1\cap\Lambda_2,\quad \forall\eta\in[0,1].
\end{align}
Define
\begin{align}
\phi(\eta) &:= \norm{\overline{\vecc{U}} - (\eta \overline{\vecc{U}^\mathrm{exact}} + (1-\eta)\vecc{X}^\ast)}{F}^2 =\eta^2\norm{\overline{\vecc{U}^\mathrm{exact}} - \vecc{X}^\ast}{F}^2 - 2\eta(\overline{\vecc{U}} - \vecc{X}^\ast):(\overline{\vecc{U}^\mathrm{exact}} - \vecc{X}^\ast) + \norm{\overline{\vecc{U}} - \vecc{X}^\ast}{F}^2.
\end{align}
If $\norm{\overline{\vecc{U}^\mathrm{exact}} - \vecc{X}^\ast}{F} = 0$, then $\norm{\overline{\vecc{U}^\mathrm{exact}}-\vecc{X}^\ast}{F} =0< \norm{\overline{\vecc{U}^\mathrm{exact}}-\overline{\vecc{U}}}{F}$, because
$\overline{\vecc{U}^\mathrm{exact}}\neq \overline{\vecc{U}}$ due to $\overline{\vecc{U}}\notin \Lambda_2$. Otherwise, it is obvious that $\phi(\eta)$ is a quadratic function with respect to $\eta$. From \eqref{eq:invariant_domain_limiter2}, we know $\vecc{X}^\ast$ minimizes $\norm{\overline{\vecc{U}}-\vecc{X}}{F}^2$ for all $\vecc{X}\in \Lambda_1\cap\Lambda_2$. Thus, $\phi(\eta)$ achieves its minimum at $\eta = 0$, which implies
$(\vecc{X}^\ast - \overline{\vecc{U}}):(\overline{\vecc{U}^\mathrm{exact}}- \vecc{X}^\ast) \geq 0.$
Therefore, we have 
\begin{align}
\norm{\overline{\vecc{U}^\mathrm{exact}}-\overline{\vecc{U}}}{F}^2
&= \norm{\overline{\vecc{U}^\mathrm{exact}}-\vecc{X}^\ast + \vecc{X}^\ast - \overline{\vecc{U}}}{F}^2 \nonumber\\
&= \norm{\overline{\vecc{U}^\mathrm{exact}}-\vecc{X}^\ast}{F}^2 + 2(\vecc{X}^\ast - \overline{\vecc{U}}):(\overline{\vecc{U}^\mathrm{exact}}- \vecc{X}^\ast) + \norm{\vecc{X}^\ast - \overline{\vecc{U}}}{F}^2
> \norm{\overline{\vecc{U}^\mathrm{exact}}-\vecc{X}^\ast}{F}^2.
\end{align} 
\end{proof}
 
Finally, we present an equivalent reformulation of problem \eqref{eq:invariant_domain_limiter2} to facilitate the design of splitting methods.
Partition  columns in the matrix $\vecc{X} = [\vec{\rho}, \vec{m}_1, \cdots, \vec{m}_d, \vec{E}]$ such that, the first column $\vec{\rho}$ contains the cell averages of density, the next $d$ columns $\vec{m}_1$ to $\vec{m}_d$ represent the cell averages of the momentum components, and the last column $\vec{E}$ corresponds to the cell averages of total energy. Similarly, we define the matrix $\overline{\vecc{U}} = [\vec{u}, \vec{v}_1, \cdots, \vec{v}_d, \vec{w}]$ and let $\transpose{\vec{b}} = [b_\rho, b_{m_1}, \cdots, b_{m_d}, b_E]$. Then, the minimization problem \eqref{eq:invariant_domain_limiter2} is equivalent to
\begin{align}\label{eq:invariant_domain_limiter3}
\min_{\vecc{X}}&~ \frac{1}{2\alpha}\Big(\norm{\vec{\rho} - \vec{u}}{2}^2 + \sum_{i=1}^{d}\norm{\vec{m}_i - \vec{v}_i}{2}^2 + \norm{\vec{E} - \vec{w}}{2}^2\Big) + \iota_{\Lambda_1}(\vec{\rho}, \vec{m}_1, \cdots, \vec{m}_d, \vec{E}) + \iota_{\Lambda_2}(\vec{\rho}, \vec{m}_1, \cdots, \vec{m}_d, \vec{E}),\nonumber\\
\text{where}&~ \Lambda_1 = \{\vecc{X}\!:\, \vecc{A}\vec{\rho} = b_\rho,~ \vecc{A}\vec{m}_1 = b_{m_1},~ \cdots,~ \vecc{A}\vec{m}_d = b_{m_d},~ \vecc{A}\vec{E} = b_E\}\\
\text{and}&~ \Lambda_2 = \{\vecc{X}\!:\,\transpose{[\rho_i, m_{1i}, \cdots, m_{di}, E_i]} \in G^\varepsilon,~ \forall i\}.\nonumber
\end{align}
\begin{remark}
We emphasize that the work in \cite{liu2024simple,LZ2024CNS,LHTZ2024FP}  focuses solely on a scalar variable and is therefore applicable only to compressible NS equations with Strang splitting \cite{LZ2022CNS,LZ2024CNS}, but not for Euler equations. The proposed limiter in this section is much more general than the method in \cite{LZ2024CNS}. In contrast, the invariant-domain-preserving limiter \eqref{eq:invariant_domain_limiter2} directly preserves the convex invariant domain thus is more broadly applicable in compressible flow simulations.
\end{remark}

\subsection{Splitting methods for $\ell^2$ minimization limiter}
An efficient implementation of splitting methods to solve \eqref{eq:invariant_domain_limiter3} requires computing proximal operators easily and efficiently. We split the objective function in a manner that facilitates the derivation of explicit  formulae. 
\begin{itemize}
\item To apply DRS method \eqref{eq:DR_algorithm}, we choose 
\begin{subequations}
\begin{align}
g(\vecc{X}) &= \frac{1}{2\alpha}\Big(\norm{\vec{\rho} - \vec{u}}{2}^2 + \sum_{i=1}^{d}\norm{\vec{m}_i - \vec{v}_i}{2}^2 + \norm{\vec{E} - \vec{w}}{2}^2\Big) + \iota_{\Lambda_1}(\vec{\rho}, \vec{m}_1, \cdots, \vec{m}_d, \vec{E}),\label{eq:euler_DR_g}\\
h(\vecc{X}) &= \iota_{\Lambda_2}(\vec{\rho}, \vec{m}_1, \cdots, \vec{m}_d, \vec{E}).\label{eq:euler_DR_h}
\end{align}
\end{subequations}
\item To apply DYS method \eqref{eq:DY_algorithm}, we choose
\begin{subequations}
\begin{align}
f(\vecc{X}) &= \iota_{\Lambda_1}(\vec{\rho}, \vec{m}_1, \cdots, \vec{m}_d, \vec{E}), \label{eq:euler_DY_f}\\
g(\vecc{X}) &= \iota_{\Lambda_2}(\vec{\rho}, \vec{m}_1, \cdots, \vec{m}_d, \vec{E}), \label{eq:euler_DY_g}\\
h(\vecc{X}) &= \frac{1}{2\alpha}\Big(\norm{\vec{\rho} - \vec{u}}{2}^2 + \sum_{i=1}^{d}\norm{\vec{m}_i - \vec{v}_i}{2}^2 + \norm{\vec{E} - \vec{w}}{2}^2\Big).\label{eq:euler_DY_h}
\end{align}
\end{subequations}
\end{itemize}
Let us derive the proximal associated with function $g$ in \eqref{eq:euler_DR_g}.
The proximal associated with function $f$ in \eqref{eq:euler_DY_f} can be derived similarly or simply set $\alpha = +\infty$.
By definition of the proximal operator, for any given $\vecc{X} = [\vec{\rho}, \vec{m}_1, \cdots, \vec{m}_d, \vec{E}]$, we need to find $\vecc{Z} = [\vec{\xi}, \vec{\eta}_1, \cdots, \vec{\eta}_d, \vec{\zeta}]$ to minimize the following function:
\begin{multline}\label{eq:minimize_problem_prox_g}
\frac{\gamma}{2\alpha}\Big(\norm{\vec{\xi}-\vec{u}}{2}^2 + \sum_{i=1}^d\norm{\vec{\eta}_i-\vec{v}_i}{2}^2 + \norm{\vec{\zeta}-\vec{w}}{2}^2\Big) + \gamma\iota_{\Lambda_1}(\vec{\xi},\vec{\eta}_1,\cdots,\vec{\eta}_d,\vec{\zeta}) \\
+ \frac{1}{2}\norm{\vec{\xi}-\vec{\rho}}{2}^2 + \frac{1}{2}\sum_{i=1}^{d}\norm{\vec{\eta}_i-\vec{m}_i}{2}^2 + \frac{1}{2}\norm{\vec{\zeta}-\vec{E}}{2}^2.
\end{multline}
By definition of the indicator function $\iota_{\Lambda_1}$, minimizing \eqref{eq:minimize_problem_prox_g} is equivalent to solving the following decoupled constraint optimization problems:
\begin{subequations}
\begin{align}
\min_{\vec{\xi}\in\IR^N}&\, \frac{\gamma}{2\alpha}\norm{\vec{\xi}-\vec{u}}{2}^2 + \frac{1}{2}\norm{\vec{\xi}-\vec{\rho}}{2}^2 \quad\text{subject~to}\quad \vecc{A}\vec{\xi} = b_\rho, \label{eq:derive_prox_min_1}\\
\min_{\vec{\eta}_i\in\IR^N}&\, \frac{\gamma}{2\alpha}\norm{\vec{\eta}_i-\vec{v}_i}{2}^2 + \frac{1}{2}\norm{\vec{\eta}_i-\vec{m}_i}{2}^2 \quad\text{subject~to}\quad \vecc{A}\vec{\eta}_i = b_{m_i},\quad\text{for}~i=1,\cdots,d, \label{eq:derive_prox_min_2}\\
\min_{\vec{\zeta}\in\IR^N}&\, \frac{\gamma}{2\alpha}\norm{\vec{\zeta}-\vec{w}}{2}^2 + \frac{1}{2}\norm{\vec{\zeta}-\vec{E}}{2}^2 \quad\text{subject~to}\quad \vecc{A}\vec{\zeta} = b_E. \label{eq:derive_prox_min_3}
\end{align}
\end{subequations}
Let us start with \eqref{eq:derive_prox_min_1}. Define a Lagrange multiplier as follows:
\begin{align}
\mathcal{L} = \frac{\gamma}{2\alpha}\norm{\vec{\xi}-\vec{u}}{2}^2 + \frac{1}{2}\norm{\vec{\xi}-\vec{\rho}}{2}^2 + \lambda(\vecc{A}\vec{\xi} - b_\rho).
\end{align}
Notice, the matrix $\vecc{A} = [1~1~\cdots~1]\in\IR^{1\times N}$. Take partial derivatives, we get
\begin{subequations}
\begin{align}
\frac{\partial\mathcal{L}}{\partial \vec{\xi}} &= \Big(\frac{\gamma}{\alpha} + 1\Big)\vec{\xi} - \frac{\gamma}{\alpha}\vec{u} - \vec{\rho} + \lambda\transpose{\vecc{A}} = \vec{0}, \label{eq:prox_g_1}\\
\frac{\partial\mathcal{L}}{\partial \lambda} &= \vecc{A}\vec{\xi} - b_\rho = 0. \label{eq:prox_g_2}
\end{align}
\end{subequations}
Left multiply $\vecc{A}$ on both side of \eqref{eq:prox_g_1}, from the constraints $\vecc{A}\vec{\xi} = b_\rho$ and $\vecc{A}\vec{u} = b_\rho$, we have
\begin{align}
b_\rho - \vecc{A}\vec{\rho} + \lambda\vecc{A}\transpose{\vecc{A}} = 0
\quad\Rightarrow\quad
\lambda = (\vecc{A}\transpose{\vecc{A}})^{-1}(\vecc{A}\vec{\rho} - b_\rho).
\end{align}
Substituting the expression for $\lambda$ above into \eqref{eq:prox_g_1} and recalling the pseudo inverse of the matrix $\vecc{A}$ is given by $\vecc{A}^+ = \transpose{\vecc{A}}(\vecc{A}\transpose{\vecc{A}})^{-1}$. We obtain
\begin{align}
\vec{\xi} = \frac{\alpha}{\gamma + \alpha}\big(\vecc{A}^+(b_\rho - \vecc{A}\vec{\rho}) + \vec{\rho}\big) + \frac{\gamma}{\gamma + \alpha}\vec{u}.
\end{align}
The \eqref{eq:derive_prox_min_2} and \eqref{eq:derive_prox_min_3} can be addressed similarly.
Therefore, associated with the function $g$ in \eqref{eq:euler_DR_g}, the proximal $\prox_g^\gamma$ maps
\begin{subequations}
\begin{align}
\vec{\rho} \quad&\rightarrow\quad \frac{\alpha}{\gamma + \alpha}\big(\vecc{A}^+(b_\rho - \vecc{A}\vec{\rho}) + \vec{\rho}\big) + \frac{\gamma}{\gamma + \alpha}\vec{u},\\
\vec{m}_i \quad&\rightarrow\quad \frac{\alpha}{\gamma + \alpha}\big(\vecc{A}^+(b_{m_i} - \vecc{A}\vec{m}_i) + \vec{m}_i\big) + \frac{\gamma}{\gamma + \alpha}\vec{v}_i,\quad\text{for}~i=1,\cdots,d,\\
\vec{E} \quad&\rightarrow\quad \frac{\alpha}{\gamma + \alpha}\big(\vecc{A}^+(b_E - \vecc{A}\vec{E}) + \vec{E}\big) + \frac{\gamma}{\gamma + \alpha}\vec{w}.
\end{align}
\end{subequations}
Notice, the proximal of an indicator of a set is the Euclidean projection onto that set. To this end, we need the projection point onto the numerical admissible set $G^\varepsilon$, i.e., computing the proximal for $h$ in \eqref{eq:euler_DR_h} an $g$ in \eqref{eq:euler_DY_g}. 
Such a projection is by no means trivial. For the ease of presentation, the projection formula to $G^\varepsilon$ and its derivation are given in the Appendix.

\subsection{An $\ell^1$ invariant-domain-preserving cell average limiter}
Consider cell averages $\overline{\vecc{U}} = [\vec{u}, \vec{v}_1, \cdots, \vec{v}_d, \vec{w}]$ of a DG polynomial.
For compressible Euler equations, the $L^1$ limiter \eqref{eq:opt_model_L1_org} is equivalent to the following unconstrained optimization problem in matrix-vector form:
\begin{align}\label{eq:opt_model_l1_Euler}
\min_{\vecc{X}}&~ \norm{\vec{\rho} - \vec{u}}{1} + \sum_{i=1}^{d}\norm{\vec{m}_i - \vec{v}_i}{1} + \norm{\vec{E} - \vec{w}}{1} + \iota_{\Lambda_1}(\vec{\rho}, \vec{m}_1, \cdots, \vec{m}_d, \vec{E}) + \iota_{\Lambda_2}(\vec{\rho}, \vec{m}_1, \cdots, \vec{m}_d, \vec{E}),\nonumber\\
\text{where}&~ \Lambda_1 = \{\vecc{X}\!:\, \vecc{A}\vec{\rho} = b_\rho,~ \vecc{A}\vec{m}_1 = b_{m_1},~ \cdots,~ \vecc{A}\vec{m}_d = b_{m_d},~ \vecc{A}\vec{E} = b_E\}\\
\text{and}&~ \Lambda_2 = \{\vecc{X}\!:\,\transpose{[\rho_i, m_{1i}, \cdots, m_{di}, E_i]} \in G^\varepsilon,~ \forall i\}.\nonumber
\end{align}
The objective function of \eqref{eq:opt_model_l1_Euler} is convex but not strongly convex, which implies the non-uniqueness of the minimizer.
We consider the DRS method \eqref{eq:DR_algorithm} to solve above minimization problem, where functions $g$ and $h$ are chosen as follows:
\begin{subequations}\label{eq:euler_l1_gh}
\begin{align}
g(\vecc{X}) &= \norm{\vec{\rho} - \vec{u}}{1} + \sum_{i=1}^{d}\norm{\vec{m}_i - \vec{v}_i}{1} + \norm{\vec{E} - \vec{w}}{1}\label{eq:euler_l1_gh_a}\\
h(\vecc{X}) &= \iota_{\Lambda_1}(\vec{\rho}, \vec{m}_1, \cdots, \vec{m}_d, \vec{E}) + \iota_{\Lambda_2}(\vec{\rho}, \vec{m}_1, \cdots, \vec{m}_d, \vec{E}).\label{eq:euler_l1_gh_b}
\end{align}
\end{subequations}
Let us derive the proximal associated with function $g$ in \eqref{eq:euler_l1_gh_a}. By definition of the proximal operator, for any given $\vecc{X} = [\vec{\rho}, \vec{m}_1, \cdots, \vec{m}_d, \vec{E}]$, we need to find $\vecc{Z} = [\vec{\xi}, \vec{\eta}_1, \cdots, \vec{\eta}_d, \vec{\zeta}]$ to minimize the following function:
\begin{align}\label{eq:euler_l1_minimize_problem_prox_g}
\gamma\Big(\norm{\vec{\xi}-\vec{u}}{1} + \sum_{i=1}^d\norm{\vec{\eta}_i-\vec{v}_i}{1} + \norm{\vec{\zeta}-\vec{w}}{1}\Big)
+ \frac{1}{2}\Big(\norm{\vec{\xi}-\vec{\rho}}{2}^2 + \sum_{i=1}^{d}\norm{\vec{\eta}_i-\vec{m}_i}{2}^2 + \norm{\vec{\zeta}-\vec{E}}{2}^2\Big).
\end{align}
Minimizing the function above is equivalent to solving the following decoupled optimization problems:
\begin{subequations}
\begin{align}
\min_{\vec{\xi}\in\IR^N}&\, \gamma\norm{\vec{\xi}-\vec{u}}{1} + \frac{1}{2}\norm{\vec{\xi}-\vec{\rho}}{2}^2\,, \label{eq:euler_l1_derive_prox_min_1}\\
\min_{\vec{\eta}_i\in\IR^N}&\, \gamma\norm{\vec{\eta}_i-\vec{v}_i}{1} + \frac{1}{2}\norm{\vec{\eta}_i-\vec{m}_i}{2}^2\,, \quad\text{for}~i=1,\cdots,d, \label{eq:euler_l1_derive_prox_min_2}\\
\min_{\vec{\zeta}\in\IR^N}&\, \gamma\norm{\vec{\zeta}-\vec{w}}{1} + \frac{1}{2}\norm{\vec{\zeta}-\vec{E}}{2}^2\,, \label{eq:euler_l1_derive_prox_min_3}
\end{align}
\end{subequations}
for which solutions were established in Section~\ref{sec:scalar}, see \eqref{eq:scalar_l1_gh} and \eqref{eq:scalar_l1_prox_g}.
Recall the shrinkage operator $\mathrm{S}_\gamma(x) = \mathrm{sgn}(x)\max\{\abs{x}-\gamma, 0\}$. Associated with the function $g$ in \eqref{eq:euler_l1_gh_a}, the proximal $\prox_g^\gamma$ maps
\begin{subequations}
\begin{align}
\rho_i &\rightarrow u_i + \mathrm{S}_\gamma(\rho_i - u_i), \\
m_{1i} &\rightarrow v_{1i} + \mathrm{S}_\gamma(m_{1i} - v_{1i}), ~\quad~ \cdots, ~\quad~
m_{di} \rightarrow v_{di} + \mathrm{S}_\gamma(m_{di} - v_{di}), \\
E_i &\rightarrow w_i + \mathrm{S}_\gamma(E_i - w_i),
\end{align}
\end{subequations}
where the subscript $i$ denotes the $i$-th component in corresponding vector.
The proximal of the function $h$ in \eqref{eq:euler_l1_derive_prox_min_2} lacks a tractable analytical form. To overcome this difficulty, we design a nested approach, where the inner DYS iteration computes $\prox_h^\gamma$ numerically. 
By definition of a proximal operator, the $\prox_h^\gamma$ gives the solution to the following minimization problem: given $\vecc{X}\in\IR^{N\times(2+d)}$, find $\vecc{Z}\in\IR^{N\times(2+d)}$ that
\begin{align}\label{eq:euler_l1_inner_problem}
\min_{\vecc{Z}\in\IR^{N\times(2+d)}}\, \frac{1}{2\gamma}\norm{\vecc{Z}-\vecc{X}}{F}^2 + \iota_{\Lambda_1}(\vecc{Z}) + \iota_{\Lambda_2}(\vecc{Z}).
\end{align}
Notice, problem \eqref{eq:euler_l1_inner_problem} is exactly \eqref{eq:invariant_domain_limiter2} with parameter $\alpha = \gamma$. Therefore, it can be solved by applying DYS method. After obtaining $\prox_h^\gamma$, we utilize the DRS \eqref{eq:DR_algorithm} to compute a minimizer of \eqref{eq:opt_model_l1_Euler}.
To start the algorithm, we set $\vec{Y}^0 = [\vec{u}, \vec{v}_1, \cdots, \vec{v}_d, \vec{w}]$ and $\vec{X}^0 = \prox_h^\gamma(\vec{Y}^{0})$.
To terminate the iteration, we choose the stopping criterion $\norm{\vec{Y}^{k+1}-\vec{Y}^{k}}{2h} < \epsilon$ with a small positive tolerance $\epsilon$.

%%%%%%%%%%%%%%%%%%%%%%%%%%%%%%%%%%%%%%%%%%%%%%%%%%%%%%%%%%%%%%%%%%%%%%%%%%%%%%%%%%%%%%%%%%%%%%%%%%%%%%%%%%%%%%%%%%%%%%%
\section{Numerical experiments}\label{sec:numerical_experiments}
%%%%%%%%%%%%%%%%%%%%%%%%%%%%%%%%%%%%%%%%%%%%%%%%%%%%%%%%%%%%%%%%%%%%%%%%%%%%%%%%%%%%%%%%%%%%%%%%%%%%%%%%%%%%%%%%%%%%%%%
In this section, we show results of applying the optimization-based limiters to a series of representative tests. These include studies of using one-dimensional manufactured data and two-dimensional simulations of demanding gas dynamic benchmarks for testing robustness of positivity-preserving schemes.

For solving $\ell^2$-norm minimizations \eqref{eq:opt_model_l2_scalar} and  \eqref{eq:invariant_domain_limiter3} in all the numerical tests in this paper, we use DYS with a fixed step size $\gamma=\frac{1}{L}$, where the Lipschitz constant is $L=\frac{1}{\alpha}$, 

For the $\ell^1$ minimizations \eqref{eq:opt_model_l1_scalar} and \eqref{eq:opt_model_l1_Euler}, we use DRS. In each iteration of DRS, the proximal operator for summation of two indicator functions, such as \eqref{eq:euler_l1_inner_problem}, is numerically solved by DYS. The step size in the DYS is still $\frac{1}{L}$ where $L$ is the Lipschitz constant for the $\ell^2$ term. The parameters in DRS are taken as follows.
We use a relaxation parameter $\lambda=1$ and a step size $\gamma>0$ obtained from tuning on a few data sets. For instance, for using the optimization based $\ell^1$-norm limiter in each time step in a time dependent problem, we first run DRS with a different choices of $\gamma>0$ at a few time steps when the limiter is invoked, then choose the best value of $\gamma$ in the sense of the fastest convergence of DRS for all future time steps. 

We remark that optimal parameters of $\lambda$ and $\gamma$ in DRS for solving  \eqref{eq:opt_model_l2_scalar} are given in \cite{liu2024simple}, which was derived from convergence rate analysis. Unfortunately, neither the analysis nor the same parameter formulae can be extended to the invariant domain \eqref{invariant-domain}.

\subsection{Synthetic tests in one dimension}\label{sec:numercal_experiment:tests_1D}
In this part, our primary objective is to examine the optimization methods through simple tests, including linear advection governed traveling waves and manufactured perturbation of Lax shock tube data for Euler equations. We generate out-of-bound data by the baseline DG scheme without any limiters, then apply the limiters to data, without integrating optimization limiters into baseline PDE simulators, so that we only study the performance of optimizers here. 
\par
In all one-dimensional tests, we chose $\varepsilon = 10^{-13}$ for numerical admissible set $G^\varepsilon$. In addition, the convergence tolerance for both DYS and DRS methods is set to $\epsilon = 10^{-13}$.

\paragraph{\bf Example~4.1 (Traveling triangle and square waves)}
The linear advection equation $\partial_t u + \partial_x u = 0$ enjoys the maximum principle: if initial data is in $[m,M]$, then so is the solution belongs at any later time $t > 0$.
The computational domain  is $\Omega=[0,3]$. We initialize right-moving triangle and square waves as follows and evolve to time $T = 1$.  
\begin{align}
u^0 = \begin{cases}
4x, & \text{if}~x \in (0.25, 0.5], \\
-4x + 4, & \text{if}~x \in (0.5, 0.75], \\
2, & \text{if}~x \in (1.25, 1.75], \\
1, & \text{otherwise}.
\end{cases}
\end{align}
Thus, the exact solution $u(x,t)\in[1,2]$ for any $x \in \Omega$ and $t \in [0,T]$.
To generate out-of-bound data, we solve the equation on a uniform mesh consisting of $300$ cells. We employ a fourth-order Runge-Kutta (RK) DG method without any limiter. The time step size is set to $0.001$, resulting in $1000$ out-of-bound data sets for postprocessing.
% four-stage fourth-order RK + $\IP^3$ Lagrange basis + $4$-point Gauss-Lobatto quadrature.
\par
We apply DYS method to solve the $\ell^2$ model \eqref{eq:opt_model_l2_scalar} and apply direct method ($\mathtt{ClipAndAssuredSum}$) \eqref{eq:ClipAndAssuredSum} and DRS method to solve the $\ell^1$ model \eqref{eq:opt_model_l1_scalar}, respectively. After postprocessing, all cell averages are enforced within the bounds, see Figure~\ref{fig:linear_advection_1D_1}. 
Notice that $\ell^1$ minimizers are not necessarily unique, and it has been proven that $\ell^2$ minimizer is one of $\ell^1$ minimizers \cite{bradley2019communication}. In general, the $\ell^1$ minimizers found by either $\mathtt{ClipAndAssuredSum}$ or different splitting methods may be different from one another, and also different from $\ell^2$ minimizer.
Nonetheless, for this particular test, up to $\epsilon$, there is  no difference in the three results in Figure~\ref{fig:linear_advection_1D_1}.
\par
In terms of performance of these optimization solvers, for the $\ell^2$ model, DYS method converges within $60$ iterations on all data sets at different time steps. In contrast, the $\ell^1$ model requires more DRS iterations, i.e., converges within $200$ iterations on all data sets at different time steps. 
 Hereinafter, we use the {\it number of projection to admissible set} to denote the total count of computations of the proximal operator for indicator function $\iota_{\Lambda_2}(\vec{x})$.
The total number of projections to the admissible set $[1,2]$ is significantly higher for the $\ell^1$ model because of its inner DYS iterations, see Figure~\ref{fig:linear_advection_1D_2}.
We use step size $\gamma = 10^{-10}$ in DRS for the $\ell^1$ model. 
When the iteration sequences are sufficiently close to the fixed point, both the DYS and DRS methods exhibit asymptotic linear convergence, as illustrated in Figure~\ref{fig:linear_advection_1D_2}.
\begin{figure}[ht!]
\centering
\begin{tabularx}{0.95\linewidth}{@{}c@{~~}c@{}}
\includegraphics[width=0.475\textwidth]{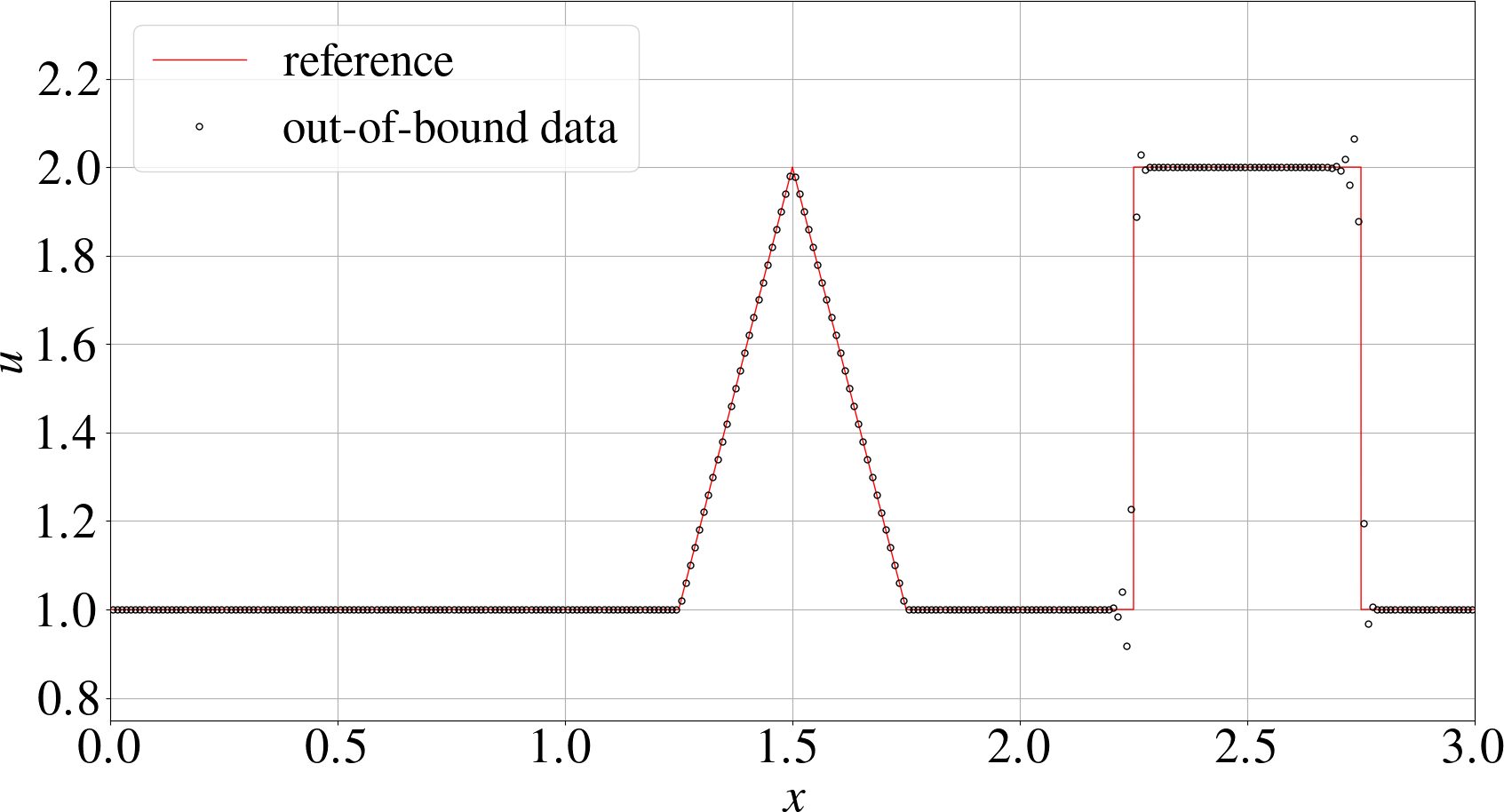} &
\includegraphics[width=0.475\textwidth]{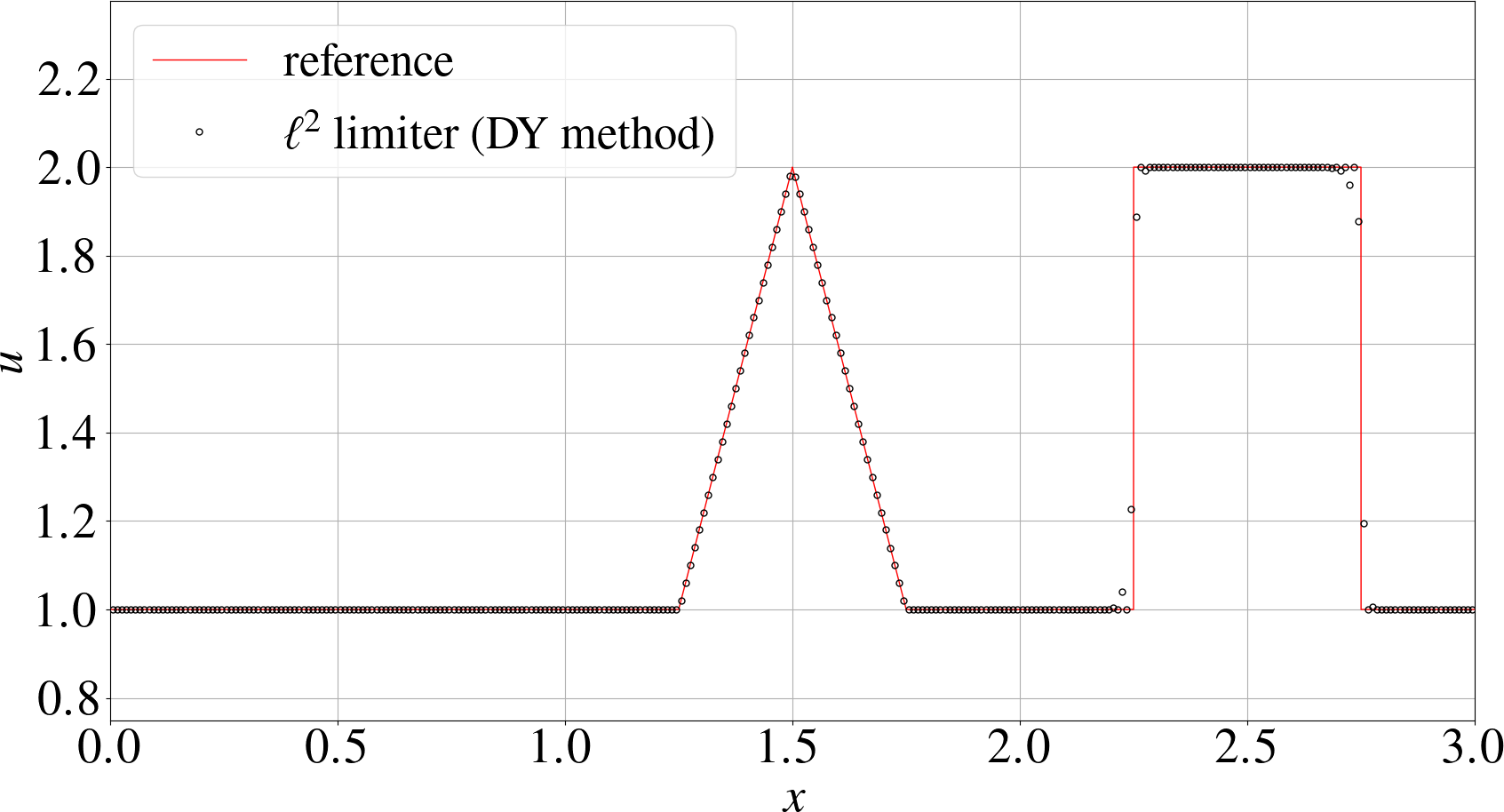} \\
\includegraphics[width=0.475\textwidth]{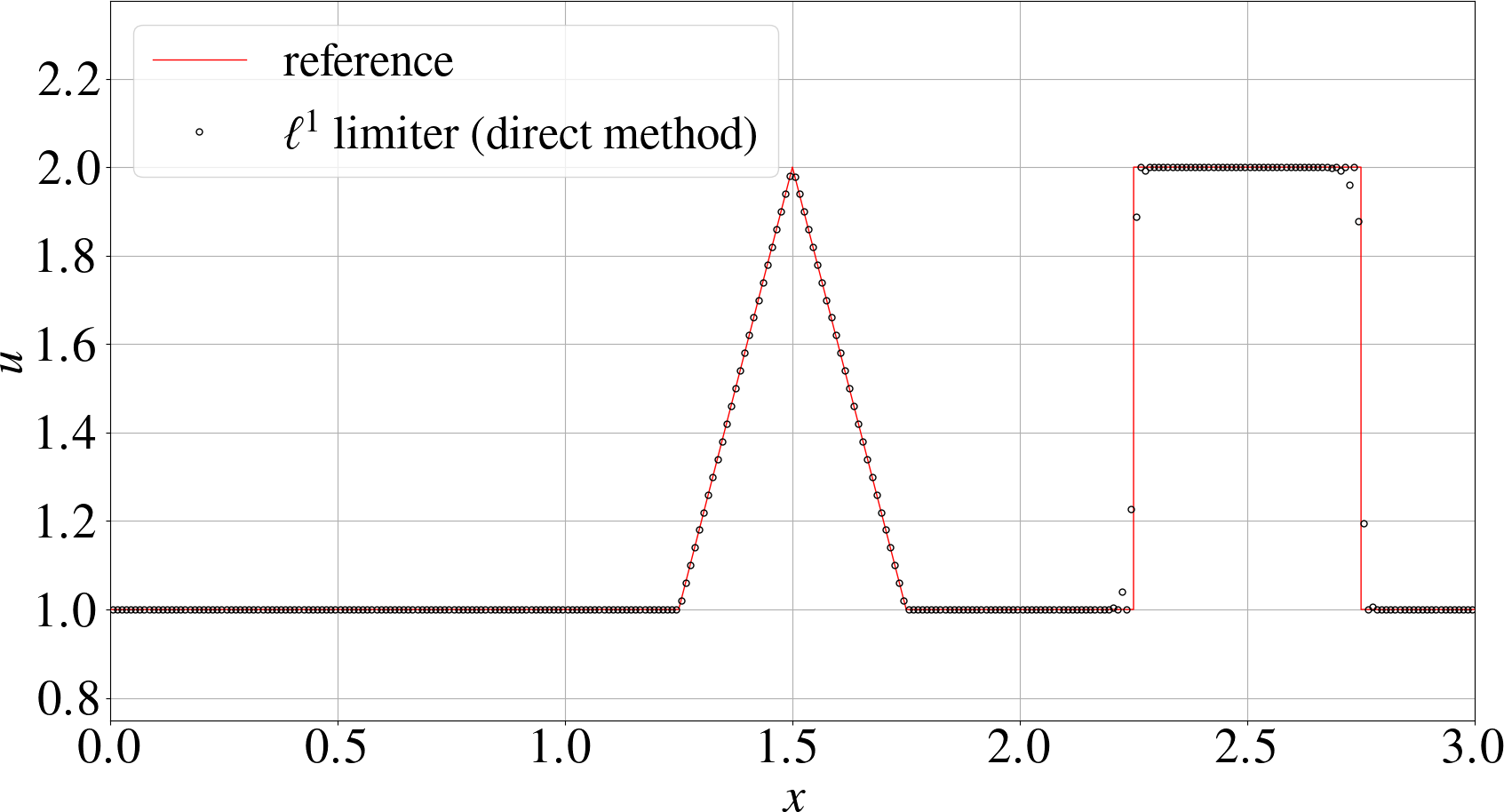} &
\includegraphics[width=0.475\textwidth]{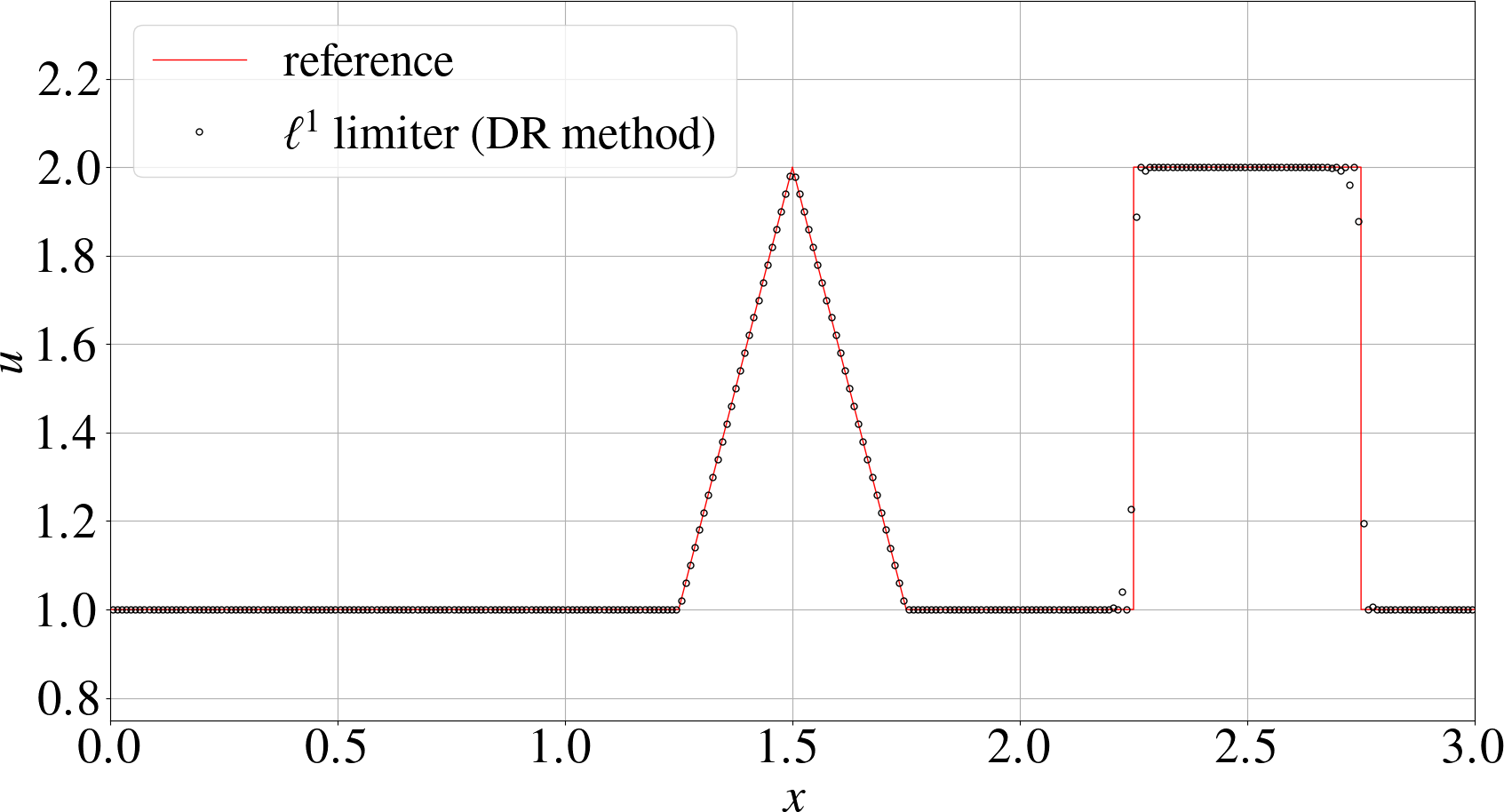} \\
\end{tabularx}
\caption{Top left: out-of-bound cell averages of data set $1000$ (generated in time step $1000$). Top right: the postprocessed result from the $\ell^2$ limiter with DYS method. Bottom: the postprocessed results from the $\ell^1$ limiter with direct method ($\mathtt{ClipAndAssuredSum}$) and DRS method. There is no   difference between $\ell^2$ limited result and two $\ell^1$ limited results up to $\epsilon$.}
\label{fig:linear_advection_1D_1}
\end{figure}
\begin{figure}[ht!]
\centering
\begin{tabularx}{\linewidth}{@{}c@{~}c@{~}c@{~}c@{}}
\begin{sideways}{$\hspace{0.95cm} \text{\small $\ell^2$ limiter} \quad$}\end{sideways} &
\includegraphics[width=0.32\textwidth]{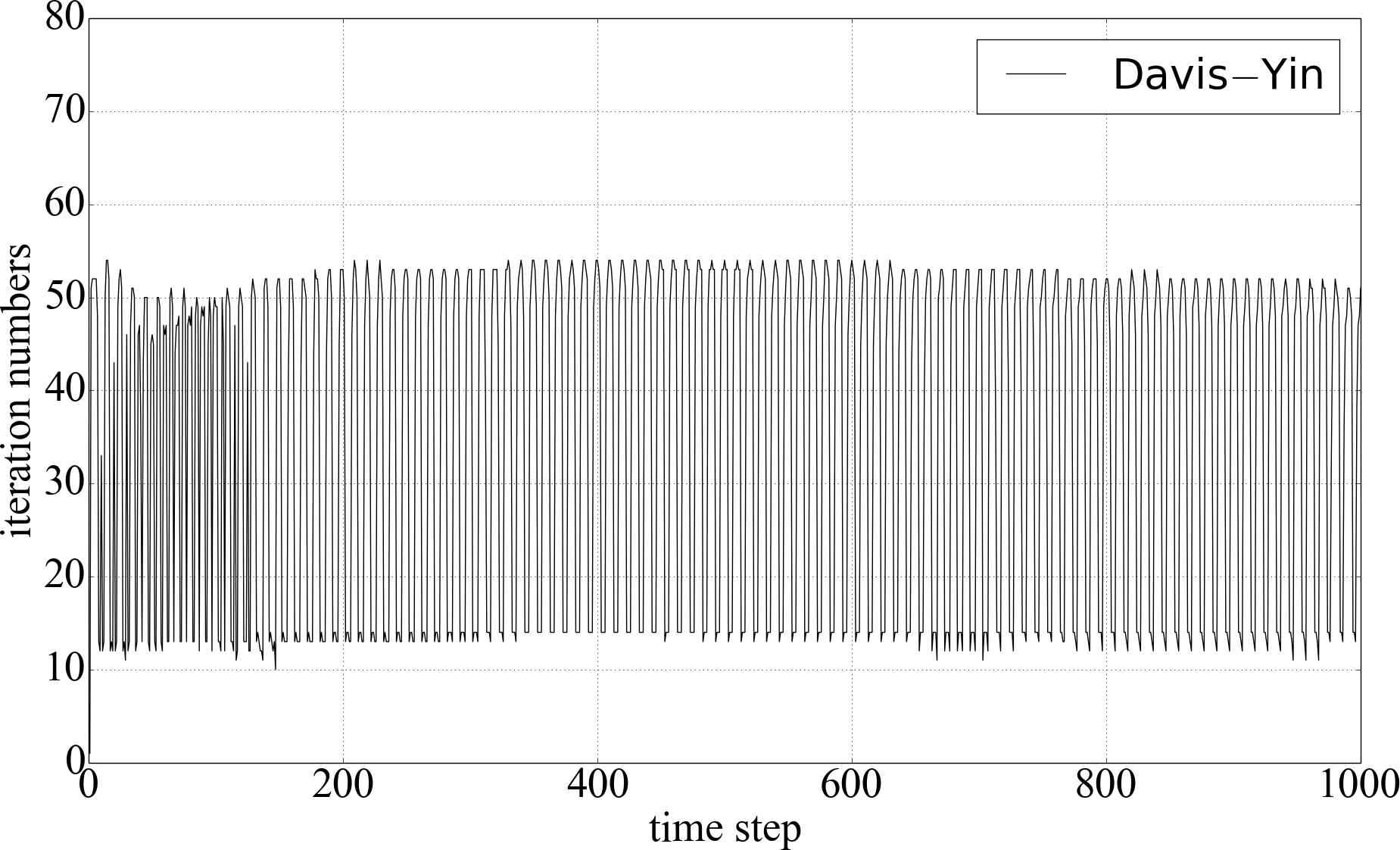} &
\includegraphics[width=0.32\textwidth]{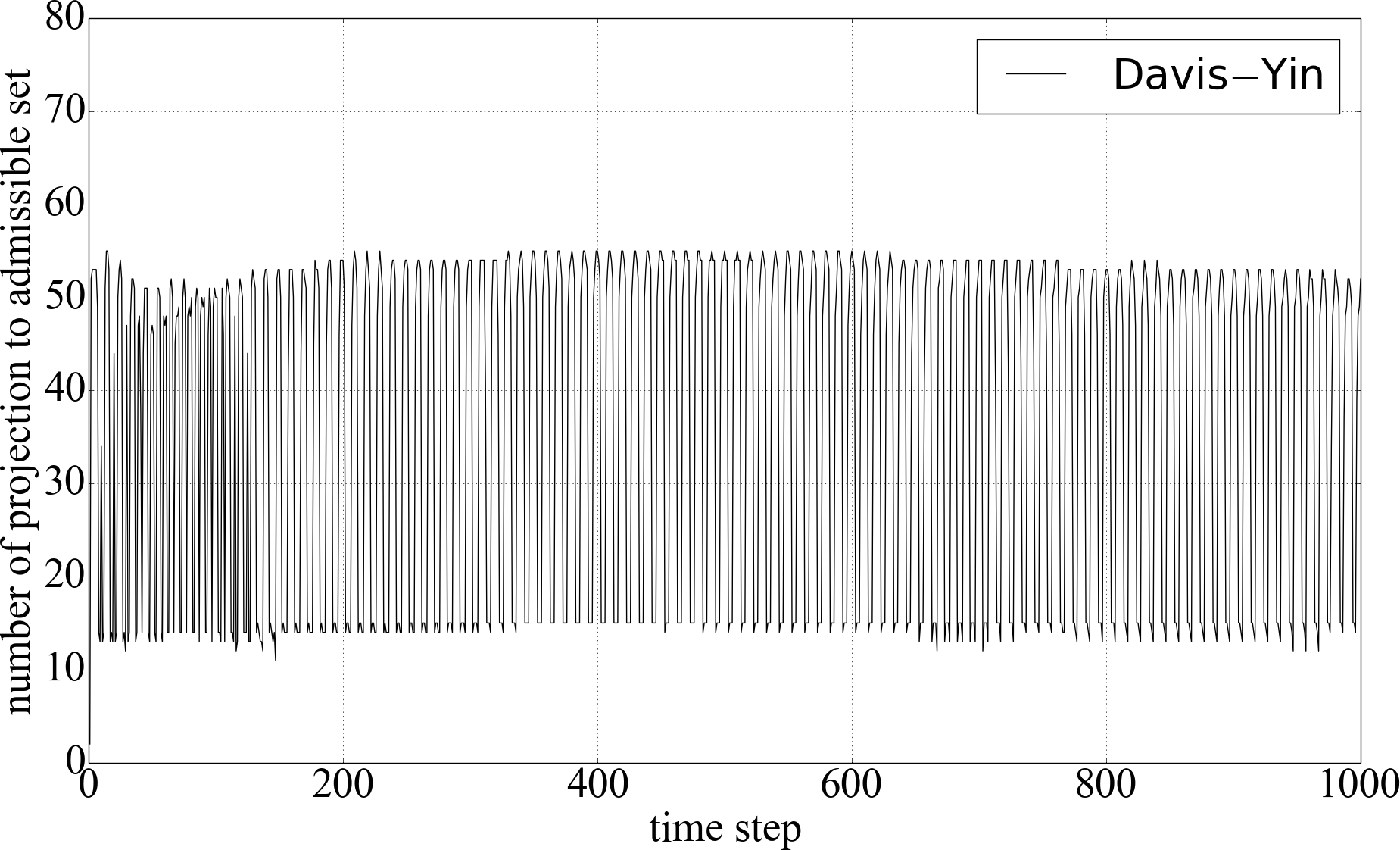} &
\includegraphics[width=0.32\textwidth]{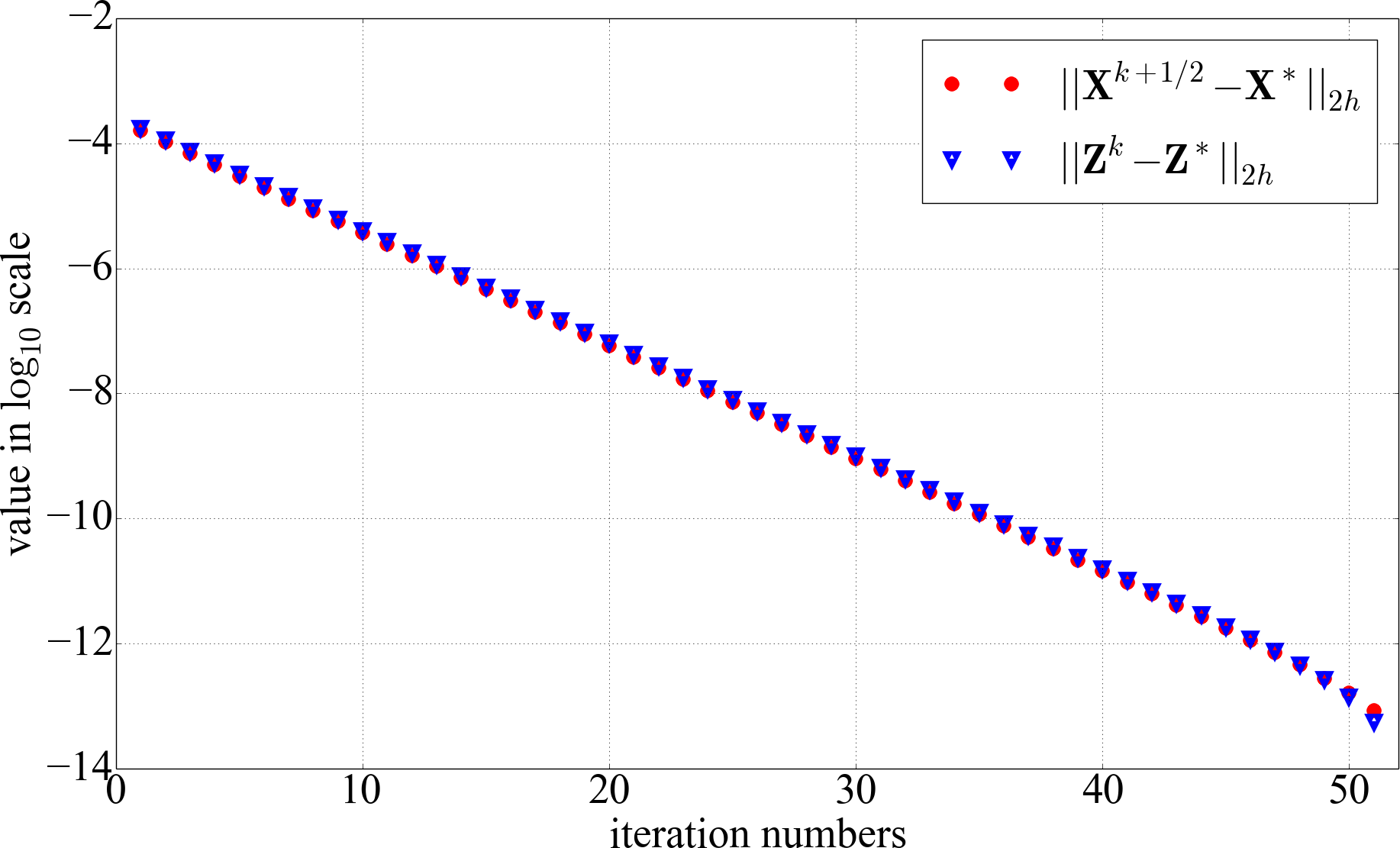} \\
\begin{sideways}{$\hspace{0.95cm} \text{\small $\ell^1$ limiter} \quad$}\end{sideways} &
\includegraphics[width=0.32\textwidth]{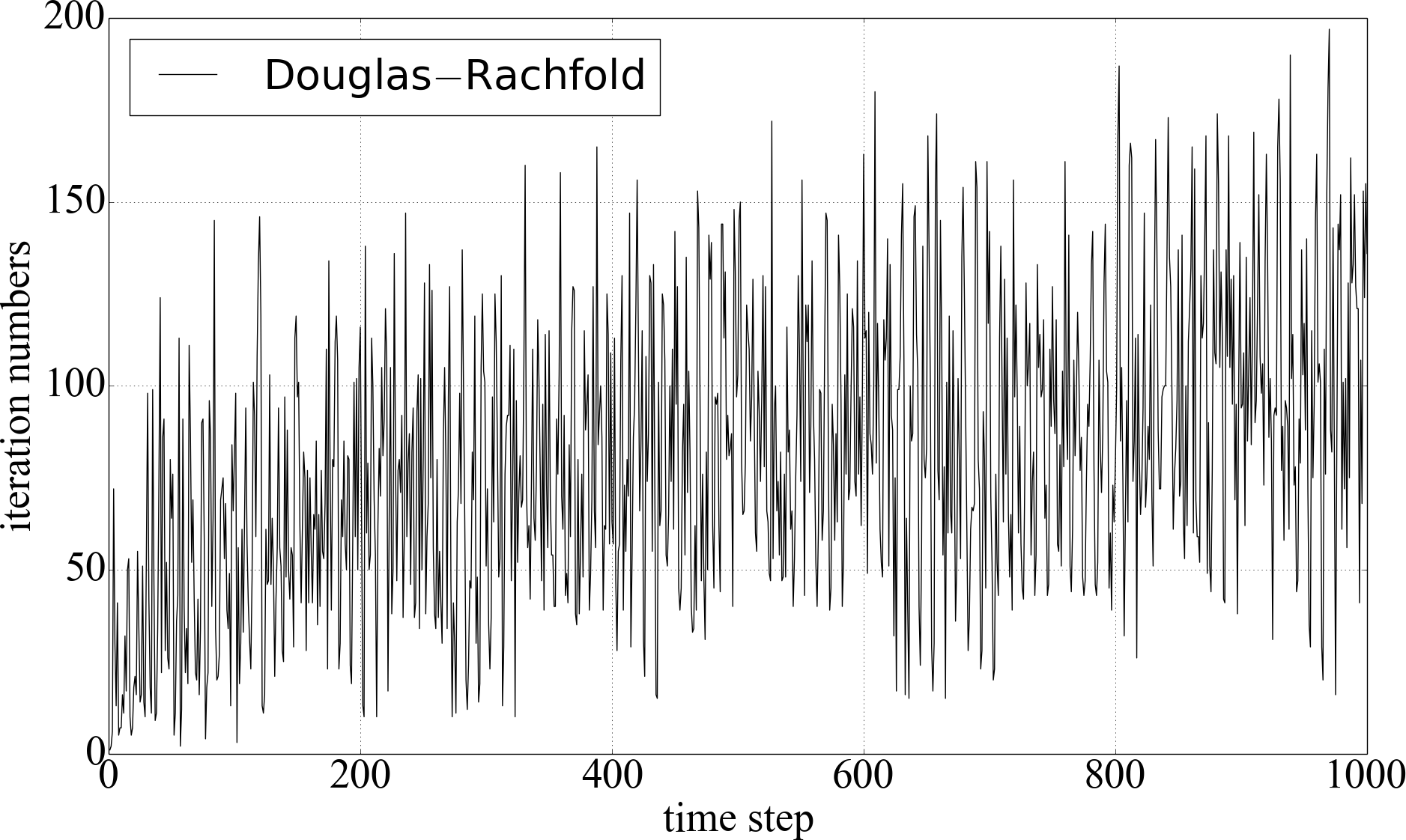} &
\includegraphics[width=0.32\textwidth]{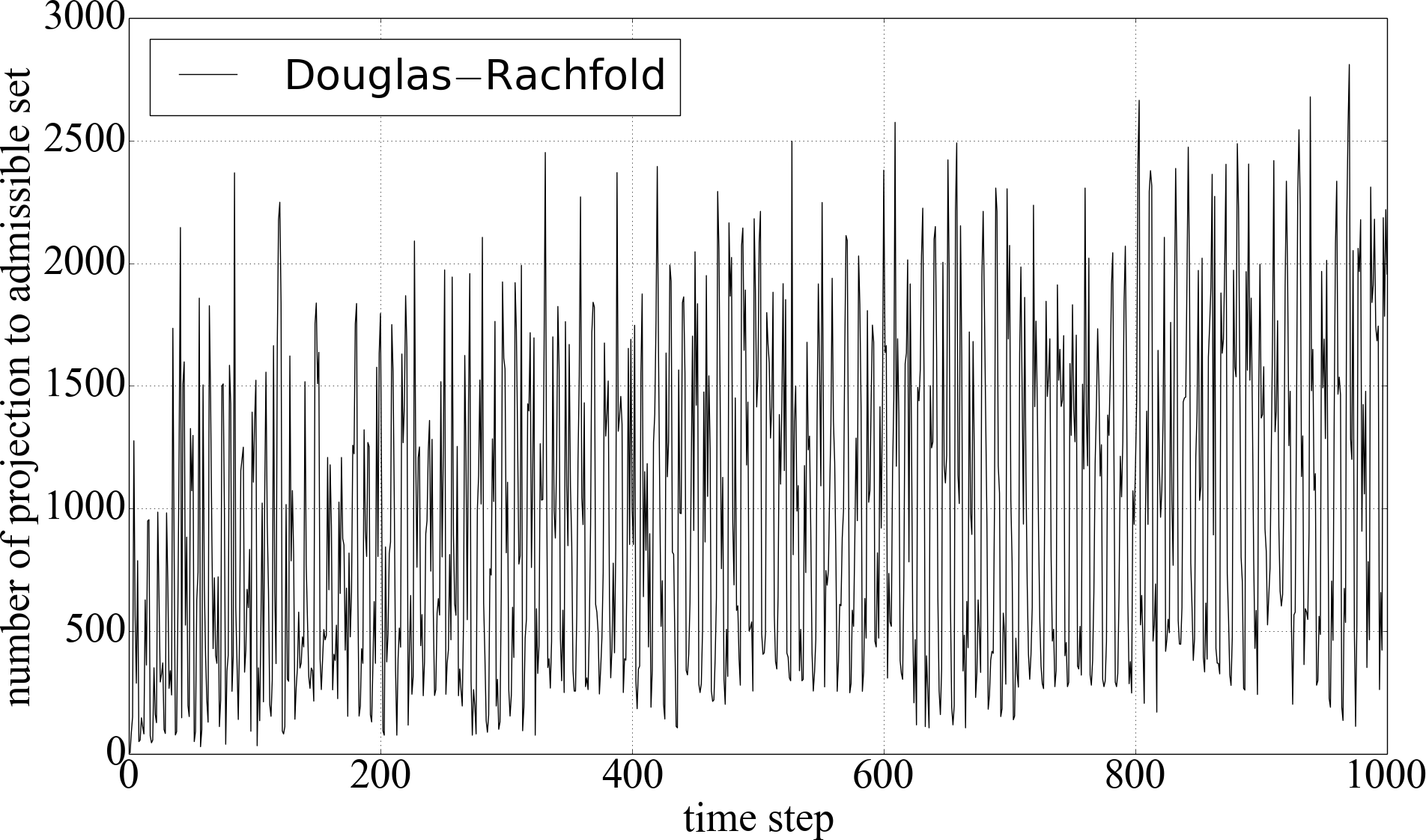} &
\includegraphics[width=0.32\textwidth]{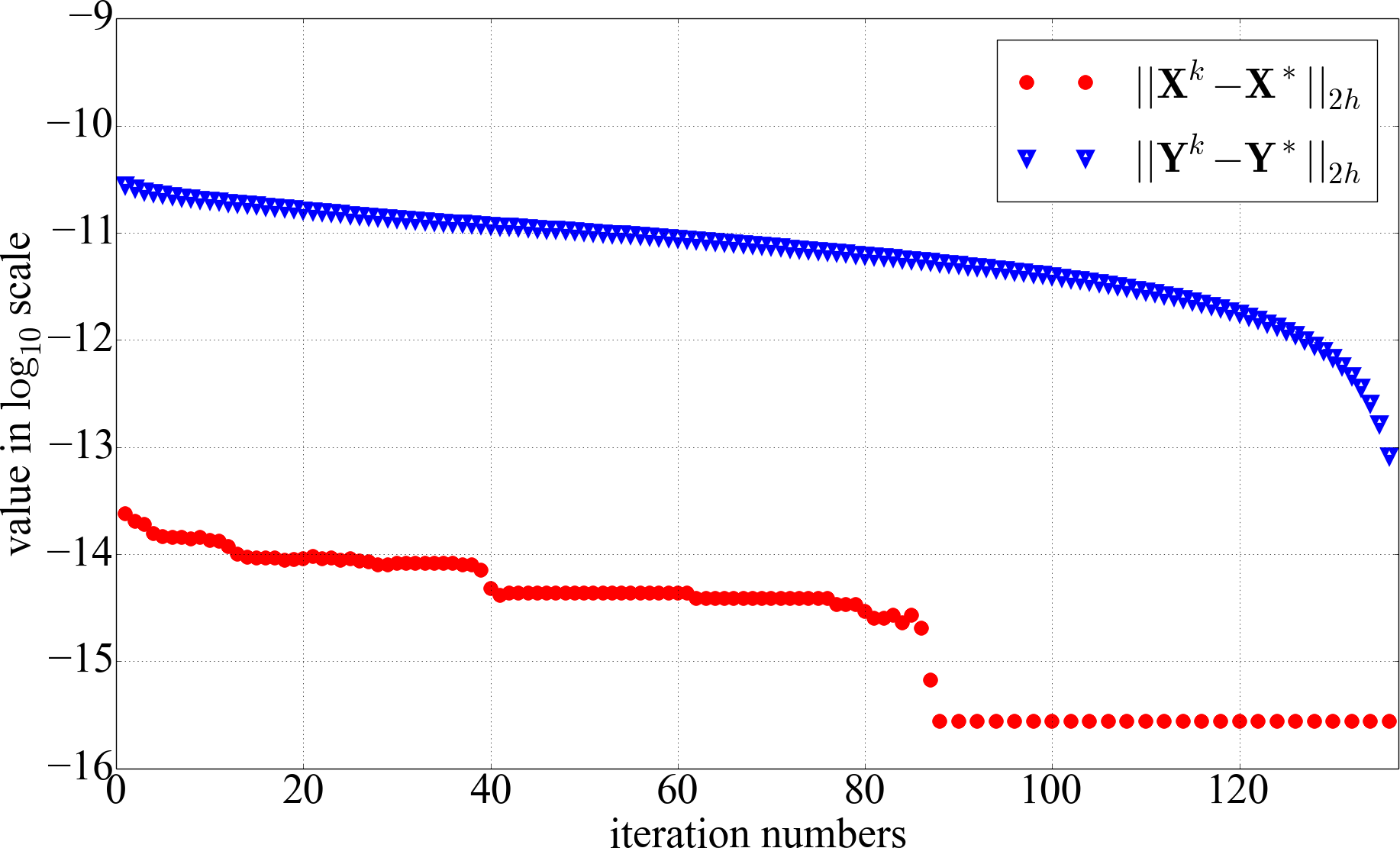} \\
\end{tabularx}
\caption{Top row: DYS method solving $\ell^2$ model. Bottom row: DRS method nested with DYS of solving $\ell^1$ model.  
From left to right: number of iterations for data at different time steps, number of projections, and the convergence error in processing one particular data set at time step $1000$. Minimizers and fixed points, denoted by superscript star, are computed numerically by using sufficiently many iterations of splitting methods. }
\label{fig:linear_advection_1D_2}
\end{figure}

\paragraph{\bf Example~4.2 (Perturbation of Lax shock tube data)}
We uniformly partition the computational domain $\Omega = [-5,5]$ into $400$ cells. To generate data violating the invariant domain, we compute cell averages of the exact solution at time $t = 1.3$ to the Lax shock tube problem for compressible Euler equations \cite[Example 2]{zhang2017positivity} and apply perturbations near the shock location.
Specifically, we perturb $5\%$ of the cells, i.e., $20$ cells in total. For the $10$ cells in pre-shock region, we subtract the following random values sampled from a uniform distribution: on each cell $K$, we modify the cell average by
\begin{align}
\on{\overline{\vec{U}}}{K} -
\begin{bmatrix}
0.1\max\abs{\rho}\,\xi_\rho\\
0.01\max\abs{m}\,\xi_m\\
0.1\max\abs{E}\,\xi_E\\
\end{bmatrix},
\quad\text{where}\quad
\xi_\rho, \xi_m, \xi_E \overset{\mathrm{iid}}{\sim} \mathrm{Uniform}[1,2].
\end{align}
To ensure conservation, we add the same values back to the $10$ cells in the post-shock region. In this way, we obtain one out-of-bound data set. To test our optimization solvers, we repeat and create $1000$ manufactured data set.
\par
We apply DYS method to solve the $\ell^2$ model \eqref{eq:invariant_domain_limiter3} and DRS method with $\gamma = 10^{-4}$  to solve the $\ell^1$ model \eqref{eq:opt_model_l1_Euler}, respectively. After postprocessing, all cell averages are enforced within the bounds. For the $\ell^2$ model, DYS method converges within $20$ iterations on all data sets. Compared to the number of iterations in solving the $\ell^2$ model, the $\ell^1$ model takes more DRS iterations and converges within $200$ iterations on all data sets. But, when compared with the total number of projections to the admissible set $G^\varepsilon$, the required projection numbers are significantly higher for the $\ell^1$ model due to its inner DYS iterations, see Figure~\ref{fig:lax_shock_tube_1D}.
We observe asymptotic linear convergence for both DYS and DRS methods, when iteration sequences are sufficiently close to the fixed point. See Figure~\ref{fig:lax_shock_tube_1D} for an illustration.
\begin{figure}[ht!]
\centering
\begin{tabularx}{\linewidth}{@{}c@{~}c@{~}c@{~}c@{}}
\begin{sideways}{$\hspace{0.95cm} \text{\small $\ell^2$ limiter} \quad$}\end{sideways} &
\includegraphics[width=0.32\textwidth]{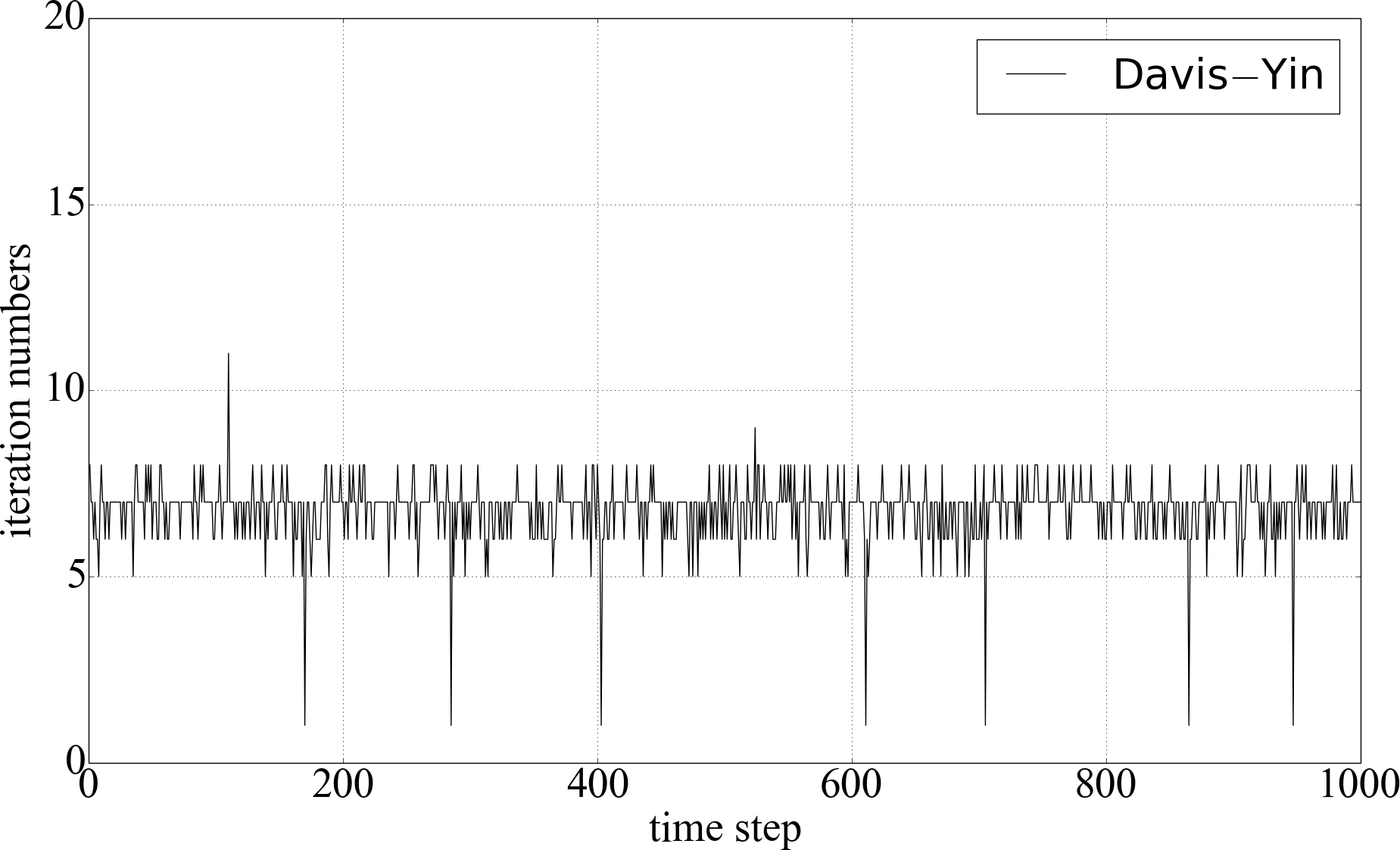} &
\includegraphics[width=0.32\textwidth]{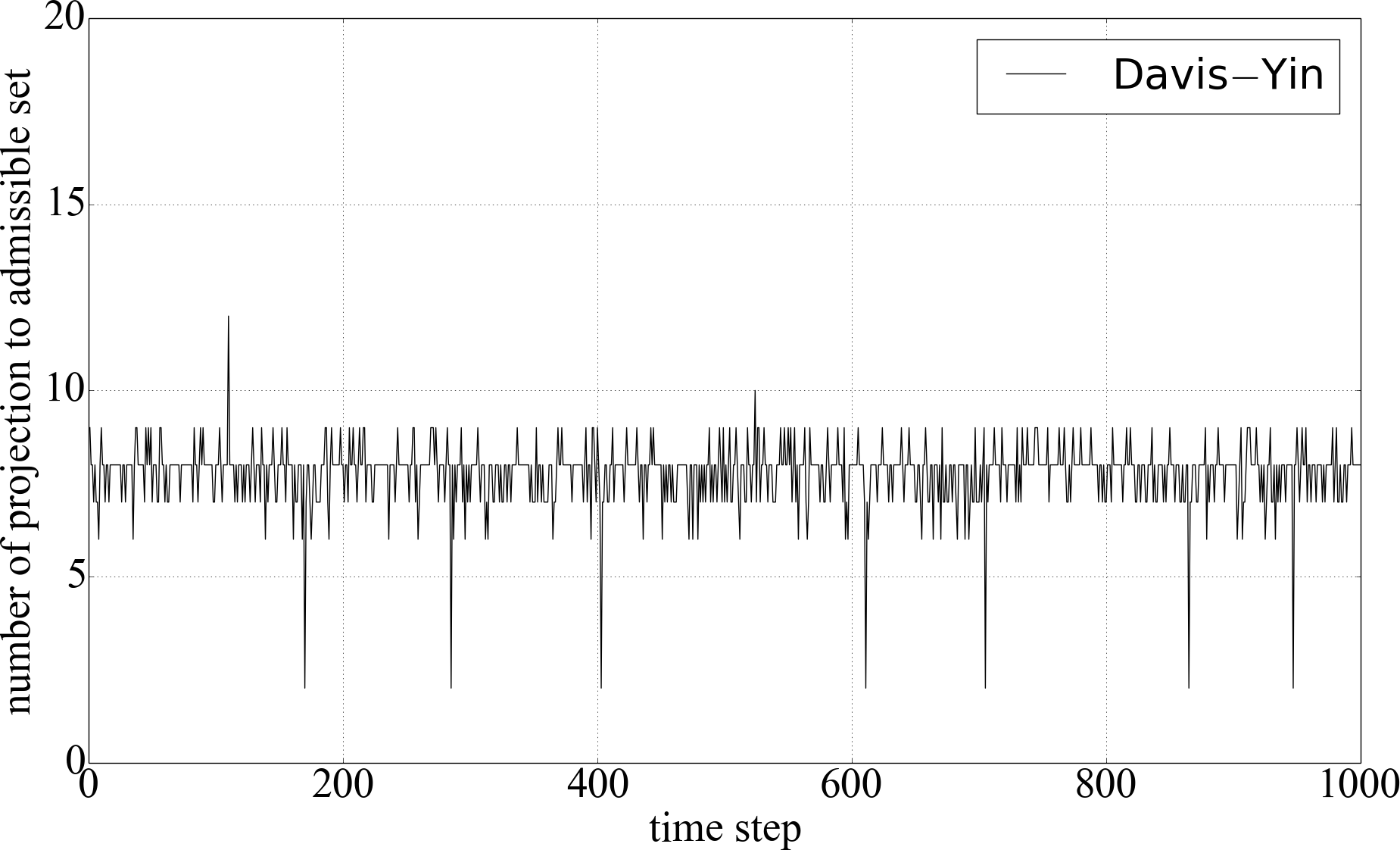} &
\includegraphics[width=0.32\textwidth]{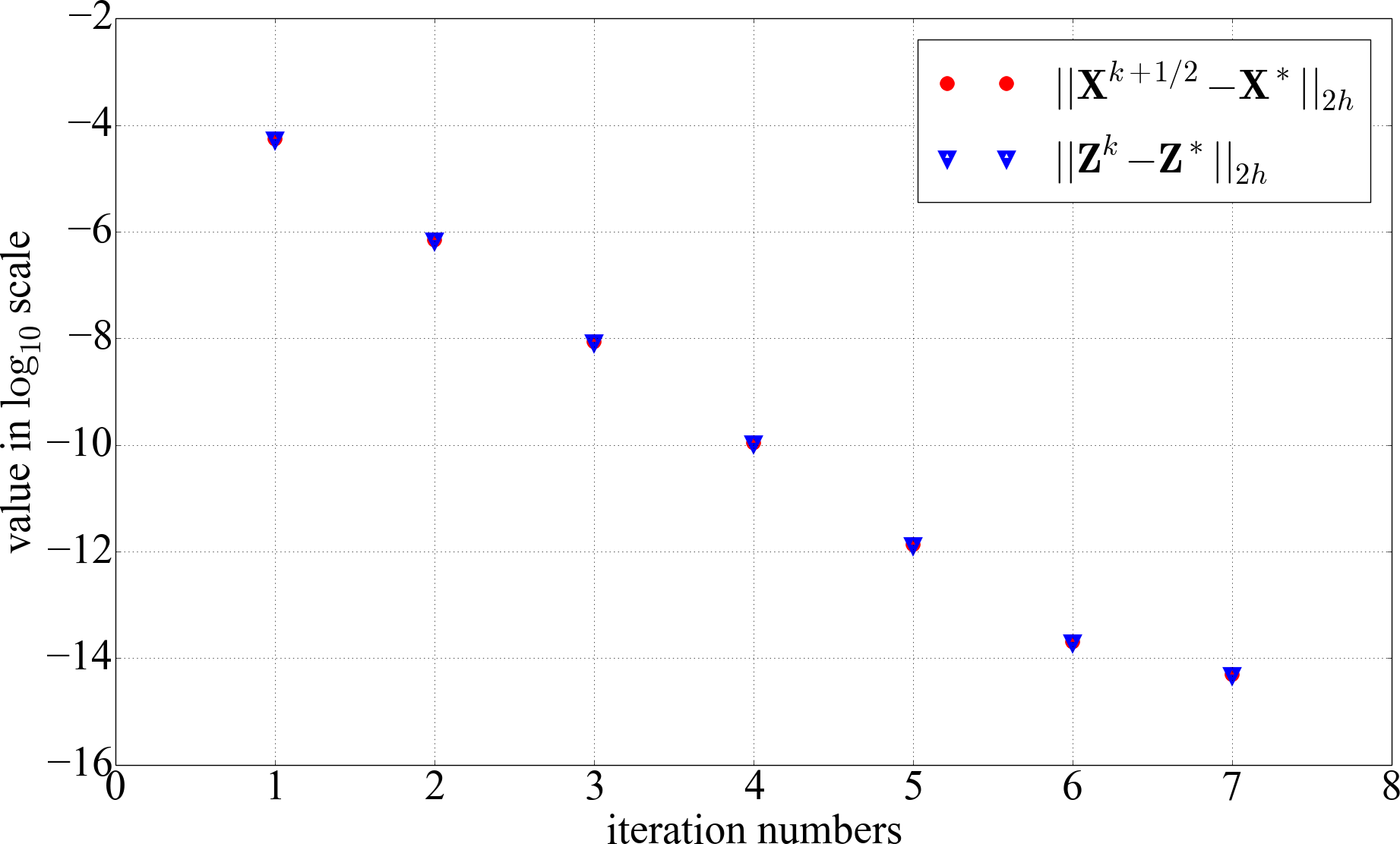} \\
\begin{sideways}{$\hspace{0.95cm} \text{\small $\ell^1$ limiter} \quad$}\end{sideways} &
\includegraphics[width=0.32\textwidth]{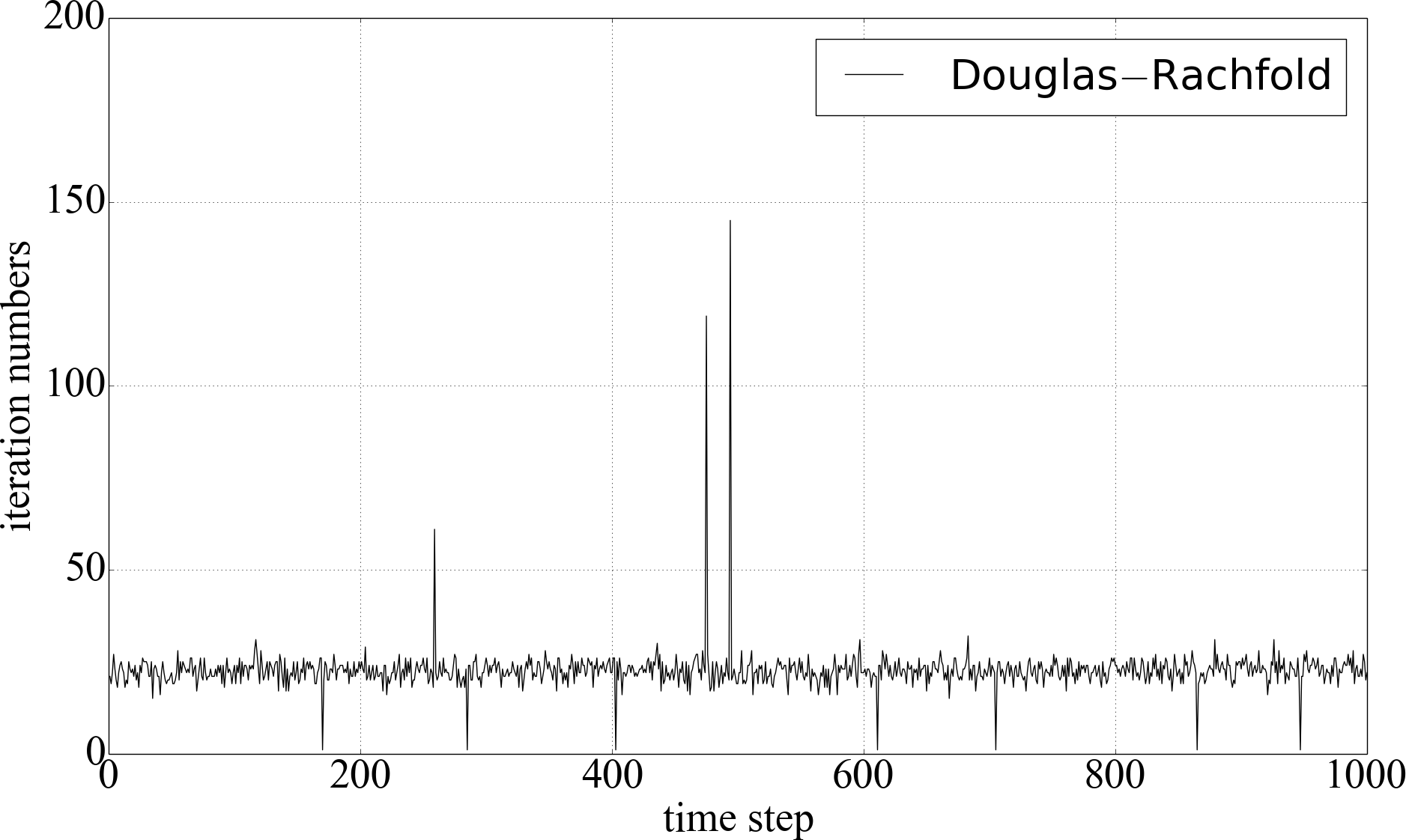} &
\includegraphics[width=0.32\textwidth]{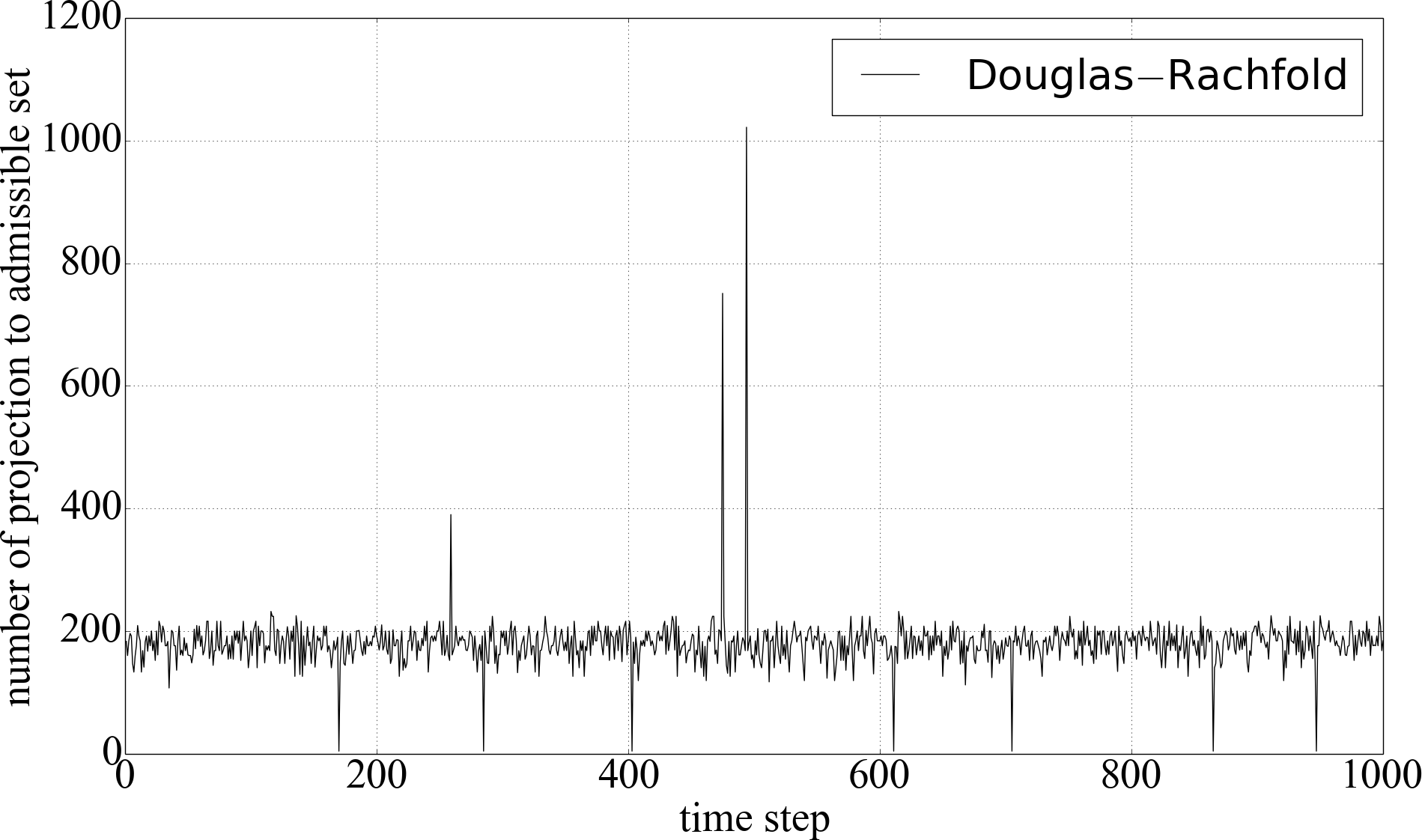} &
\includegraphics[width=0.32\textwidth]{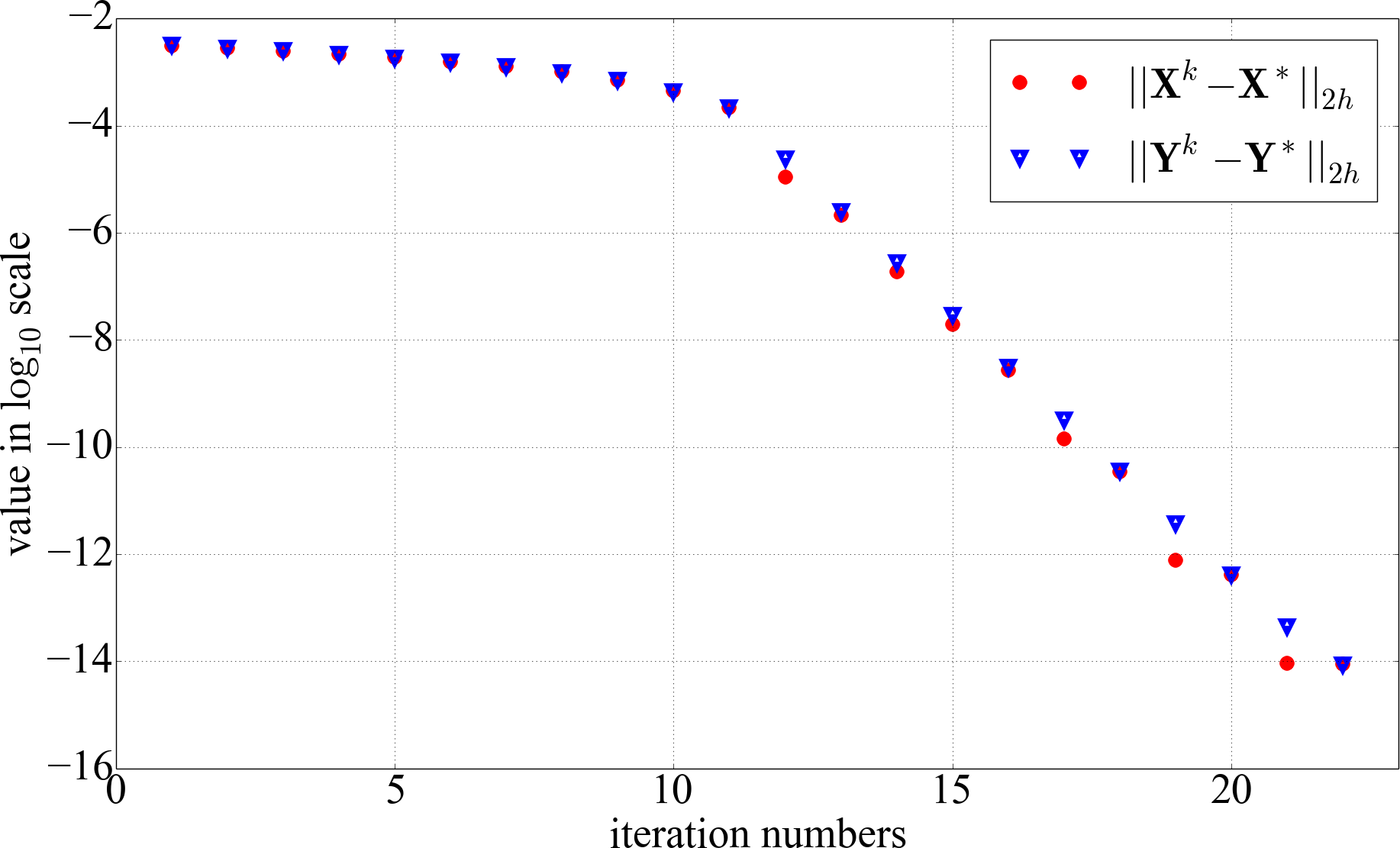} \\
\end{tabularx}
\caption{Top row: DYS method solving $\ell^2$ model. Bottom row: DRS method nested with DYS of solving $\ell^1$ model. From left to right: number of iterations for data at different time steps, total number of projections to admissible set, and the convergence error in processing the data set at time step $1000$. Minimizers and fixed points, denoted by superscript star, are computed numerically by taking sufficiently many iterations.  }
\label{fig:lax_shock_tube_1D}
\end{figure}

\subsection{Benchmark tests in two dimension}\label{sec:numercal_experiment:tests_2D}
In this part, we apply limiters to DG schemes solving demanding test cases, including the Sedov blast wave and a high speed astrophysical jet. Our optimization-based postprocessing procedure preserves invariant domains effectively while producing high-quality solutions for challenging practical applications.
\par
In all two-dimensional simulations, we apply the optimization-based cell average limiter after each stage of RK method, whenever there is a cell average out of the admissible set. 
Only when the cell averages are in the invariant domain,    the Zhang-Shu limiter \cite{zhang2010positivity} can be further used to ensure quadrature points are also in $G$.
For all test cases except the astrophysical jet, we choose $\varepsilon = 10^{-13}$ in $G^\varepsilon$ and the convergence tolerance $\epsilon = 10^{-13}$ for DYS and DRS methods. For the astrophysical jet, we use $\varepsilon = 10^{-8}$ and $\epsilon = 10^{-8}$ due to its extremely high speed.
For solving the $\ell^1$ limiter, DRS method with a step size $\gamma = 10^{-7}$ is used in all two dimensional examples.

\paragraph{\bf Example~4.3 (Convergence study)}
We utilize the manufactured solution method on domain $\Omega = [0,1]^2$ with end time $T = 0.1$ to study the spatial convergence rate of solving Euler equations. The prescribed non-polynomial solutions are: 
\begin{subequations}
\begin{align}
\rho &= \sin^{16}{\pi(x + y - 2t)} + 10^{-13},\\
\vec{u} &= \begin{bmatrix}
1 \\ 1
\end{bmatrix},\\
e &= \displaystyle \frac{10^{-13}}{(\gamma - 1)}\cdot\frac{1}{\sin^{16}{\pi(x + y - 2t)} + 10^{-13}}.
\end{align}
\end{subequations}
The right-hand side functions and the initial and boundary conditions are defined by the manufactured solutions. 
The domain $\Omega$ is uniformly partitioned into square cells. For spatial discretization, we employ $\IP^k$ ($k=2$ and $3$) DG methods, where the bases are constructed by orthonormal Legendre polynomials. The numerical integrations are evaluated by $(k+1)^2$-point Gauss quadrature. 
\par
Let $\mathtt{err}_{\Delta x}$ denote the error on a grid associated with mesh resolution $\Delta x$. The discrete $L^2_h$ and $L^1_h$ errors of density are computed by
\begin{subequations}
\begin{align}
L^2_h~\text{error:}\qquad
\|\rho_h^{n} - \rho(t^n)\|_{L^2_h}^2 &= 
{\Delta x}^2 \sum_{i} \sum_{\nu}\omega_\nu \Big|\sum_{j} \rho_{ij}^n\,\varphi_{ij}(\vec{q}_\nu) - \rho(t^n\!, \vec{q}_\nu)\Big|^2, \\
L^1_h~\text{error:}\qquad
\|\rho_h^{n}-\rho(t^n)\|_{L^1_h} &= 
{\Delta x}^2 \sum_{i} \sum_{\nu}\omega_\nu \Big|\sum_{j} \rho_{ij}^n\,\varphi_{ij}(\vec{q}_\nu) - \rho(t^n\!, \vec{q}_\nu)\Big|,
\end{align}
\end{subequations}
where $\varphi_{ij}$ denotes the $j$-th basis on cell $i$, and $\omega_\nu$ and $\vec{q}_\nu$ are quadrature weights and points. The errors for momentum and total energy are measured similarly.
Then, the convergence rate is defined by $\ln(\mathtt{err}_{\Delta x}/\mathtt{err}_{\Delta x/2})/\ln(2)$.
\par
To evaluate convergence rates, we successively refine mesh of $\Delta{x} = 1/25, 1/50, 1/100$ and employ a fourth-order RK method with fixed time step size $\Delta{t} = 5\times 10^{-4}$. The time step size is small enough, which guarantees that the spatial error is dominated.
We apply $\ell^2$ limiter \eqref{eq:invariant_domain_limiter3} when measuring the convergence rates with respect to $L_h^2$ norm; and apply $\ell^1$ limiter \eqref{eq:opt_model_l1_Euler} when measuring the convergence rates with respect to $L_h^1$ norm, see Table~\ref{tab:convergence_Euler_test_l2} and Table~\ref{tab:convergence_Euler_test_l1}. 
The optimization-based cell average limiters are triggered and optimal convergence rates are obtained.
\begin{table}[ht!]
\centering
\begin{tabularx}{0.9\linewidth}{@{~~}c@{~~}|C@{~}C@{~}|C@{~}C@{~}}
\toprule
\multicolumn{1}{c|}{~} & \multicolumn{2}{c|}{$\IP^2$ basis} & \multicolumn{2}{c}{$\IP^3$ basis}  \\
\toprule
$\Delta x$ & $\|\vec{U}_h^{N_T} - \vec{U}(T)\|_{L_h^2}$ & rate & $\|\vec{U}_h^{N_T} - \vec{U}(T)\|_{L_h^2}$ & rate \\
\midrule
$1/25$  & $3.116\times 10^{-3}$ & ---   & $6.514\times 10^{-3}$ & ---   \\
$1/50$  & $3.534\times 10^{-4}$ & 3.141 & $4.846\times 10^{-5}$ & 7.071 \\
$1/100$ & $4.400\times 10^{-5}$ & 3.006 & $1.302\times 10^{-6}$ & 5.218 \\
\bottomrule
\end{tabularx}
\caption{The discrete $L_h^2$ errors and convergence rates for $\IP^2$ and $\IP^3$ spaces. The $\ell^2$ limiter is solved by DYS method.}
\label{tab:convergence_Euler_test_l2}
\end{table}
\begin{table}[ht!]
\centering
\begin{tabularx}{0.9\linewidth}{@{~~}c@{~~}|C@{~}C@{~}|C@{~}C@{~}}
\toprule
\multicolumn{1}{c|}{~} & \multicolumn{2}{c|}{$\IP^2$ basis} & \multicolumn{2}{c}{$\IP^3$ basis}  \\
\toprule
$\Delta x$ & $\|\vec{U}_h^{N_T} - \vec{U}(T)\|_{L_h^1}$ & rate & $\|\vec{U}_h^{N_T} - \vec{U}(T)\|_{L_h^1}$ & rate \\
\midrule
$1/25$  & $3.687\times 10^{-3}$ & ---   & $5.040\times 10^{-3}$ & ---   \\
$1/50$  & $3.455\times 10^{-4}$ & 3.416 & $3.710\times 10^{-5}$ & 7.086 \\
$1/100$ & $4.248\times 10^{-5}$ & 3.024 & $1.177\times 10^{-6}$ & 4.979 \\
\bottomrule
\end{tabularx}
\caption{The discrete $L_h^1$ errors and convergence rates for $\IP^2$ and $\IP^3$ spaces. The $\ell^1$ limiter is solved by DRS method with parameter $\gamma = 10^{-3}$}.
\label{tab:convergence_Euler_test_l1}
\end{table}

\paragraph{\bf Example~4.4 (Sedov blast wave)}
This test describes a strong shock expanding from a point-source explosion in a uniform medium and involves a strong shock and low density, which makes it of great utility as a verification test for the robustness of a simulator for compressible Euler equations \cite{LSZ2025cRKDG}.
\par
We consider the following configuration on computational domain $\Omega = [0, 1.1]^2$ with the simulation end time $T = 1$. 
Uniformly partition the domain $\Omega$ into square cells with mesh resolution $\Delta x = 1.1/160$.
The initials are defined as piecewise constants: the density $\rho^0 = 1$, the momentum $\vec{m}^0 = \transpose{(0,0)}$, and the total energy $E^0$ is set to $10^{-12}$ everywhere, except in the cell located at the lower left corner, where $0.244816/\Delta{x}^2$ is used.
Reflective boundary conditions are imposed on the left and bottom boundaries, and outflow boundary conditions are imposed on the right and top boundaries.
\par
Our optimization-based limiters can enforce invariant domain for SSP RKDG schemes with large time step sizes. 
Compared to the Zhang-Shu method for enforcing positivity \cite{zhang2010positivity}, 
the advantage of integrating this optimization-based limiter with SSP RKDG methods is that the restrictive CFL from weak positivity in Zhang-Shu method \cite{zhang2017positivity} is no longer needed thus  implementation is simplified.
In this example, we employ a three-stage third-order SSP RK method in conjunction with $\IP^2$ modal DG to solve the compressible Euler equation \eqref{eq:compressible_Euler}. Our time marching scheme is as follows:
\begin{subequations}\label{eq:SSPRK33}
\begin{align}
\vec{U}^{(2)} &= \vec{U}^n - \Delta t \div{\vec{F}^\mathrm{a}(\vec{U}^n)},\\
\vec{U}^{(3)} &= \frac{3}{4}\vec{U}^n + \frac{1}{4}\Big[\vec{U}^{(2)} - \Delta t \div{\vec{F}^\mathrm{a}(\vec{U}^{(2)})}\Big],\\
\vec{U}^{n+1} &= \frac{1}{3}\vec{U}^n + \frac{2}{3}\Big[\vec{U}^{(3)} - \Delta t \div{\vec{F}^\mathrm{a}(\vec{U}^{(3)})}\Big].
\end{align}
\end{subequations}
We use the Lax–Friedrichs flux \cite{zhang2010positivity,zhang2017positivity}. The $\IP^2$ basis functions are constructed using orthogonal Legendre polynomials. Numerical integrations are evaluated by the tensor product of $3$-point Gauss quadrature.
\par
When combining a DG method of $k$-th degree basis with a $(k+1)$-th order explicit RK scheme, the following expression provides an estimate of the effective CFL number for linear stability.
\begin{align}\label{eq:num_test_CFL}
\mathrm{CFL} = \frac{1}{2k+1}, \quad\text{for}~\IP^k~\text{or}~\IQ^k~\text{scheme}. 
\end{align}
Here, we use $\mathrm{CFL} = 0.2$ for $\IP^2$ scheme, which permits greater time step sizes than required for weak positivity, that is, negative cell averages are produced during the simulation.
After each stage in \eqref{eq:SSPRK33}, if out-of-bound cell averages appear, we apply our optimization-based postprocessing to modify the DG polynomial. 

An ad-hoc strategy to improve efficiency and robustness of the optimization-based limiters is to avoid modifying regions of the computational domain which is not reached by the shock wave. 
Following the approach in \cite[Remark 5]{LZ2024CNS}, we define a subset of domain $\Omega$ as follows: 
\begin{align}
\Omega^\mathrm{lim} = \left\{\mathrm{cell}~K_i: ~\mathrm{either}~~ \overline{\vec{U}_h}|_{K_i}\notin G^\varepsilon 
~~\mathrm{or}~~ \overline{E_h}|_{K_i} - \frac{1}{2}\,\frac{\norm{\overline{\vec{m}_h}|_{K_i}}{2}^2}{\overline{\rho_h}|_{K_i}} \geq 10^{-10}   \right\}.
\end{align}
In this example, the optimization-based postprocessing is applied only to the cells in $\Omega^\mathrm{lim}$, thereby avoiding the redistribution of mass or energy into regions where the shock wave has not yet propagated to.

\par
Figure~\ref{fig:sedov_2D} shows snapshot of the density  at time $T=1$. 
Our scheme preserves global conservation and the invariant domain. The shock location is correctly captured. The optimization cell average limiter is triggered only at one time step during time evaluation, and the performance of the splitting methods solving the minimizations is shown in Figure~\ref{fig:sedov_2D}. 
Besides the optimization based cell average limiter and Zhang-Shu limiter \cite{zhang2010positivity} for correcting quadrature point values, no additional limiters are used to remove oscillations.

\begin{remark}
    We can see that the $\ell^1$ limiter is much more expensive to compute than the $\ell^2$ limiter for 2D Sedov blast wave test. However, we will see that  $\ell^1$ limiter may not always be so expensive in the next example of Mach 2000 astrophysical jet. 
\end{remark}

\begin{figure}[ht!]
\centering
\begin{tabularx}{0.925\linewidth}{@{}c@{~~}c@{~}c@{~~}c@{}}
\begin{sideways}{$\hspace{1.75cm} \text{\small $\ell^2$ limiter} \quad$}\end{sideways} &
\includegraphics[width=0.3\textwidth]{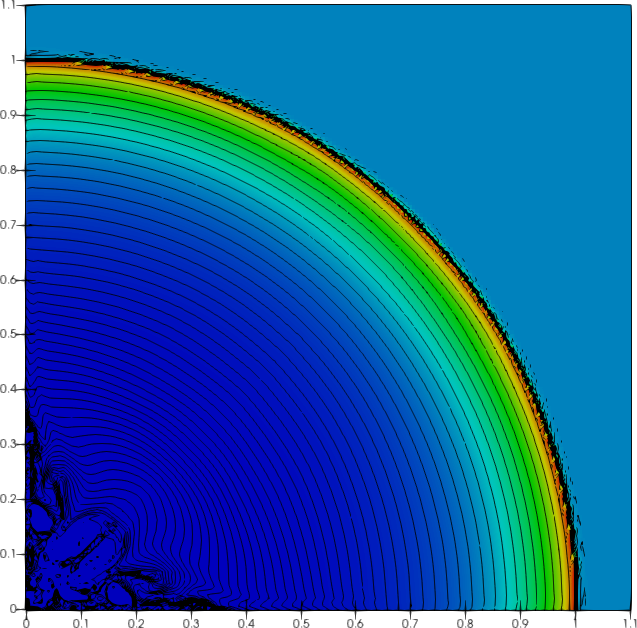} &
\includegraphics[width=0.044\textwidth]{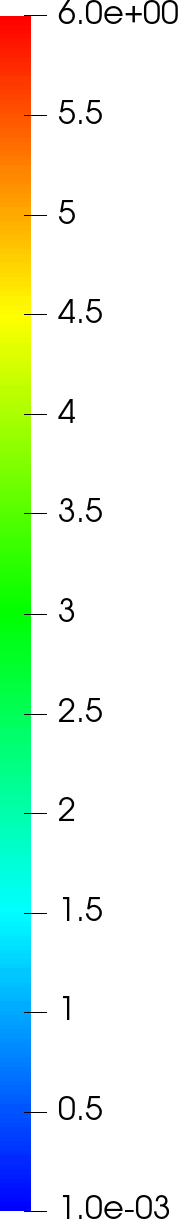} &
\includegraphics[width=0.49\textwidth]{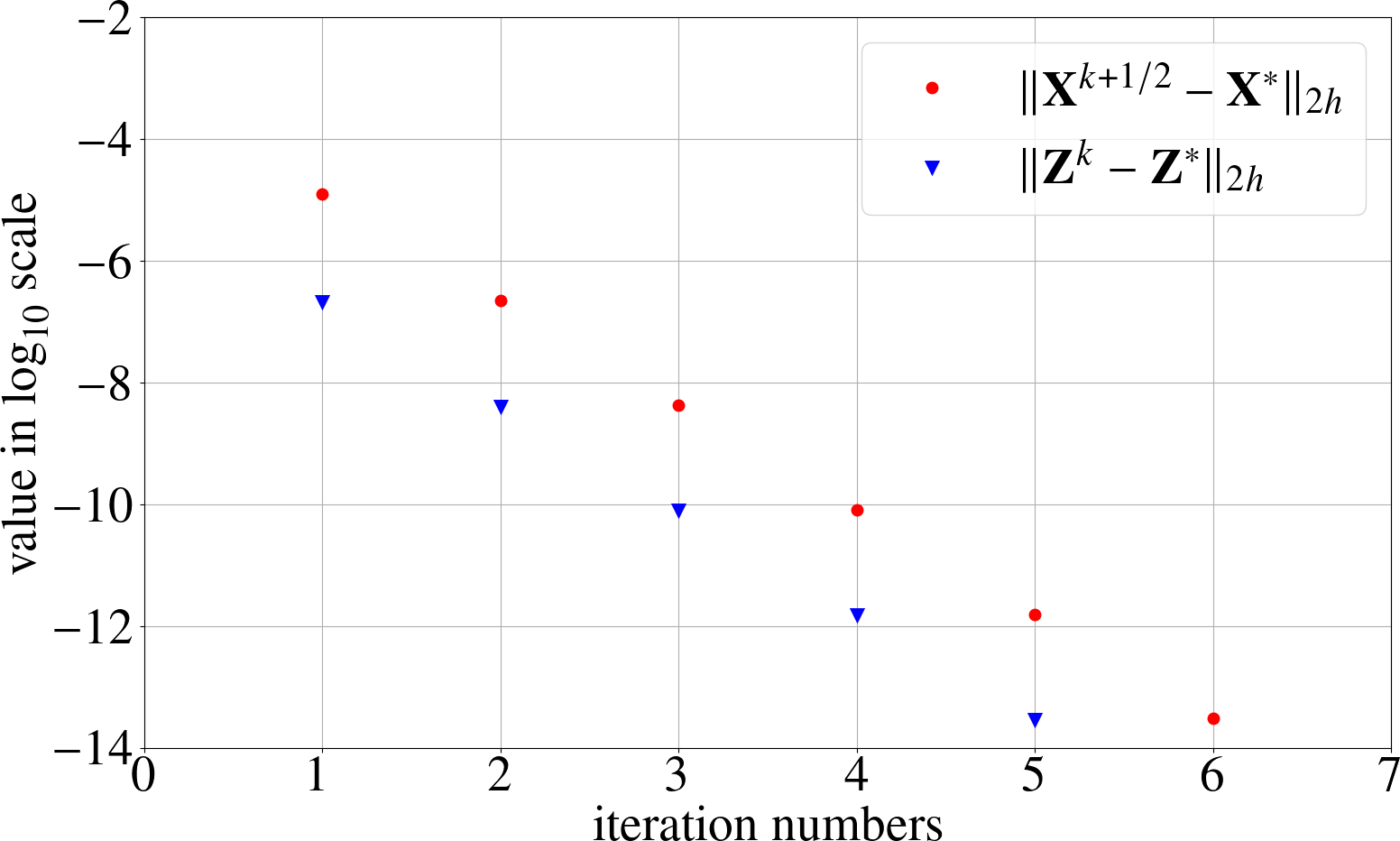} \\
\begin{sideways}{$\hspace{1.75cm} \text{\small $\ell^1$ limiter} \quad$}\end{sideways} &
\includegraphics[width=0.3\textwidth]{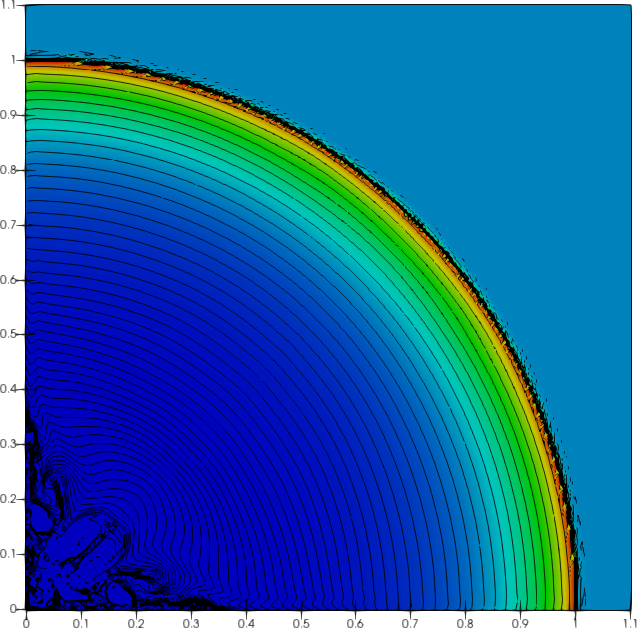} &
\includegraphics[width=0.044\textwidth]{color_bar_sedov.png} &
\includegraphics[width=0.49\textwidth]{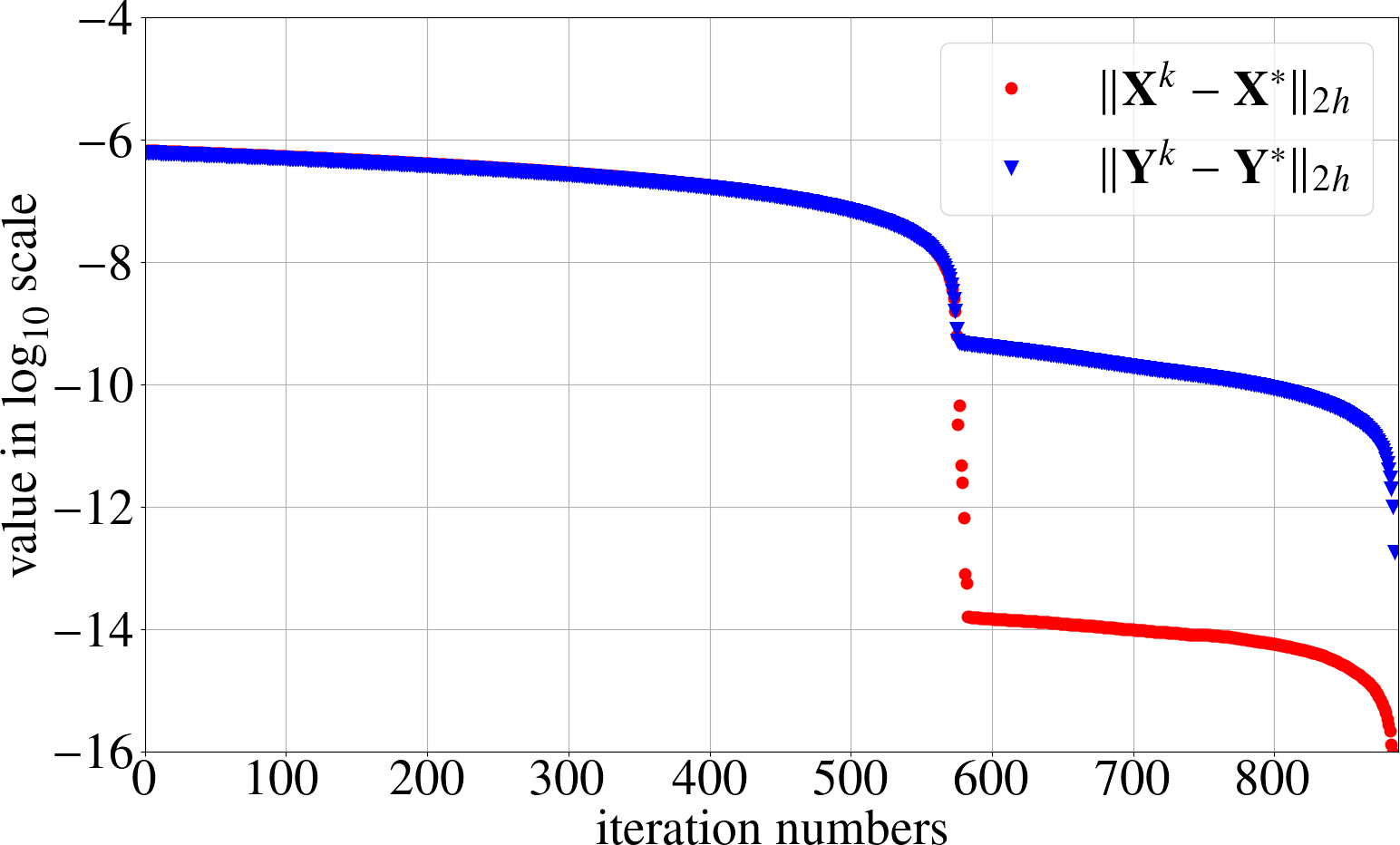} \\
\end{tabularx}
\caption{Sedov blast wave. Left: snapshot of density at $T = 1$. Plot of density: $50$ exponentially distributed contour lines of density from $0.001$ to $6$. Right: number of DYS and DRS iterations in processing out-of-bound data of $\ell^2$ and $\ell^1$ model in the second RK stage at time step $74669$, which is the only time step (during the whole time evolution) where the cell averages violate the invariant domain.}
\label{fig:sedov_2D}
\end{figure}

\paragraph{\bf Example~4.5 (High Mach number astrophysical jet)}
The numerical replication of gas flows and shock wave patterns observed in Hubble Space Telescope images presents a demanding benchmark for computational methods, particularly when extreme Mach numbers are involved \cite{gardner2009numerical,zhang2010positivity}. We validate the robustness of our method by simulating Mach $2000$ astrophysical jets governed by compressible Euler equations without radiative cooling.
\par
Let us consider a high Mach number astrophysical jet configuration on the computational domain $\Omega = [0,1]\times[-0.5,0.5]$. We set the simulation end time $T = 0.001$.
We take the ideal gas constant $\gamma = 5/3$. The initial density $\rho^0 = 0.5$, momentum $\vec{m}^0 = \vec{0}$, and pressure $p^0 = 0.4127$.
The left edge of domain $\Omega$ is prescribed with the following inflow boundary condition:
\begin{align}
\transpose{[\rho, m_x, m_y, p]} = 
\begin{cases}
\transpose{[5,\, 4000,\, 0,\, 0.4127]} & \text{if}~x=0~\text{and}~\abs{y}\leq 0.05,\\
\transpose{[0.5,\, 0,\, 0,\, 0.4127]} & \text{if}~x=0~\text{and}~\abs{y}> 0.05,
\end{cases}
\end{align}
while the top, right, and bottom edges of domain $\Omega$ are treated as outflow boundaries.
\par
One advantage of our optimization-based invariant-domain-preserving approach is its applicability to numerical schemes without strong stability preserving (SSP) properties.   
In this example, we employ the classical  four-stage fourth-order RK (RK$4$) method, which is not a SSP RK method, in conjunction with the $\IQ^3$ DG spectral element method (SEM) to discretize the Euler equations \eqref{eq:compressible_Euler}.
\begin{subequations}\label{eq:RK4}
\begin{align}
\vec{U}^{(2)} &= \vec{U}^n - \frac{\Delta t}{2} \div{\vec{F}^\mathrm{a}(\vec{U}^n)},\\
\vec{U}^{(3)} &= \vec{U}^n - \frac{\Delta t}{2} \div{\vec{F}^\mathrm{a}(\vec{U}^{(2)})},\\
\vec{U}^{(4)} &= \vec{U}^n - \Delta{t} \div{\vec{F}^\mathrm{a}(\vec{U}^{(3)})},\\
\vec{U}^{n+1} &= \vec{U}^n - \Delta{t} \Big[\frac{1}{6}\div{\vec{F}^\mathrm{a}(\vec{U}^n)} + \frac{1}{3}\div{\vec{F}^\mathrm{a}(\vec{U}^{(2)})} + \frac{1}{3}\div{\vec{F}^\mathrm{a}(\vec{U}^{(3)})} + \frac{1}{6}\div{\vec{F}^\mathrm{a}(\vec{U}^{(4)})} \Big],
\end{align}
\end{subequations}
We uniformly partition domain $\Omega$ into square cells with mesh resolution $\Delta x = 1/640$. For spatial discretization, we employ the Lax–Friedrichs flux. The Lagrange bases of $\IQ^3$ DG SEM are constructed via the tensor product of $4$-point Gauss-Lobatto point and numerical integrations are evaluated by the tensor product of $4$-point Gauss-Lobatto rule.
\par
The fourth-order RK method is not SSP, and when combined with $\IQ^3$ DG SEM, the resulting discretization lacks weak positivity (see \cite{zhang2017positivity} for its definition), i.e., negative cell averages may appear during the simulation. To enforce invariant domain without losing conservation and high order accuracy, we apply our optimization-based postprocessing to modify the DG polynomial with a CFL number $\frac{1}{7}$ as determined by \eqref{eq:num_test_CFL} after each stage in \eqref{eq:RK4} whenever any out-of-bound cell average is produced.  The optimization-based limiters are applied to the whole computational domain for this test. 
\par
 Figure~\ref{fig:astrophysical_jet_density} shows snapshots of the density field at time $T = 0.001$. Compared to the DYS method in $\ell^2$ limiter, it requires more number of projections to admissible set for DRS method to solve the minimization problem in $\ell^1$ limiter. When combined with Zhang-Shu limiter, both optimization-based cell average limiter ($\ell^2$ and $\ell^1$) robustly produce simulation results with correct shock location without other limiters for reducing oscillations. These computational results are consistent with high order DG methods stabilized by conventional non-optimization based limiters, see \cite{LZ2022CNS}.
\begin{figure}[ht!]
\centering
\begin{tabularx}{0.975\linewidth}{@{}c@{~}c@{~}c@{~}c@{}}
\begin{sideways}{$\hspace{1.2cm} \text{\small $\ell^2$ limiter} \quad$}\end{sideways} &
\includegraphics[width=0.48\textwidth]{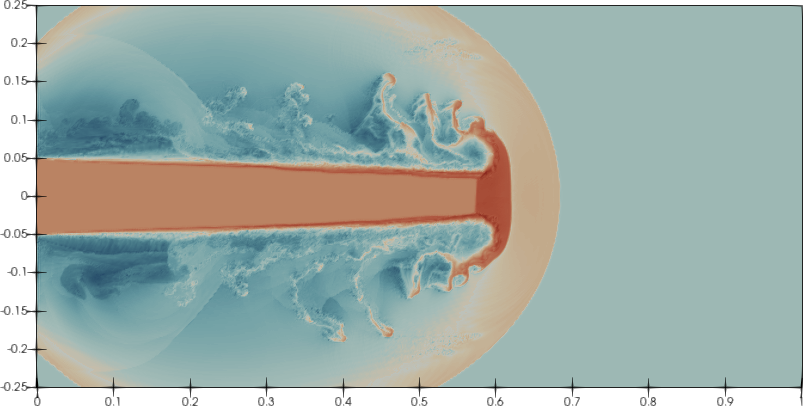} &
\includegraphics[width=0.058\textwidth]{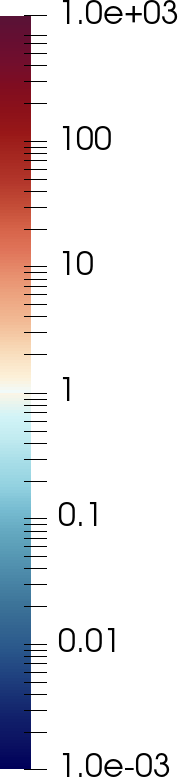} &
\includegraphics[width=0.41\textwidth]{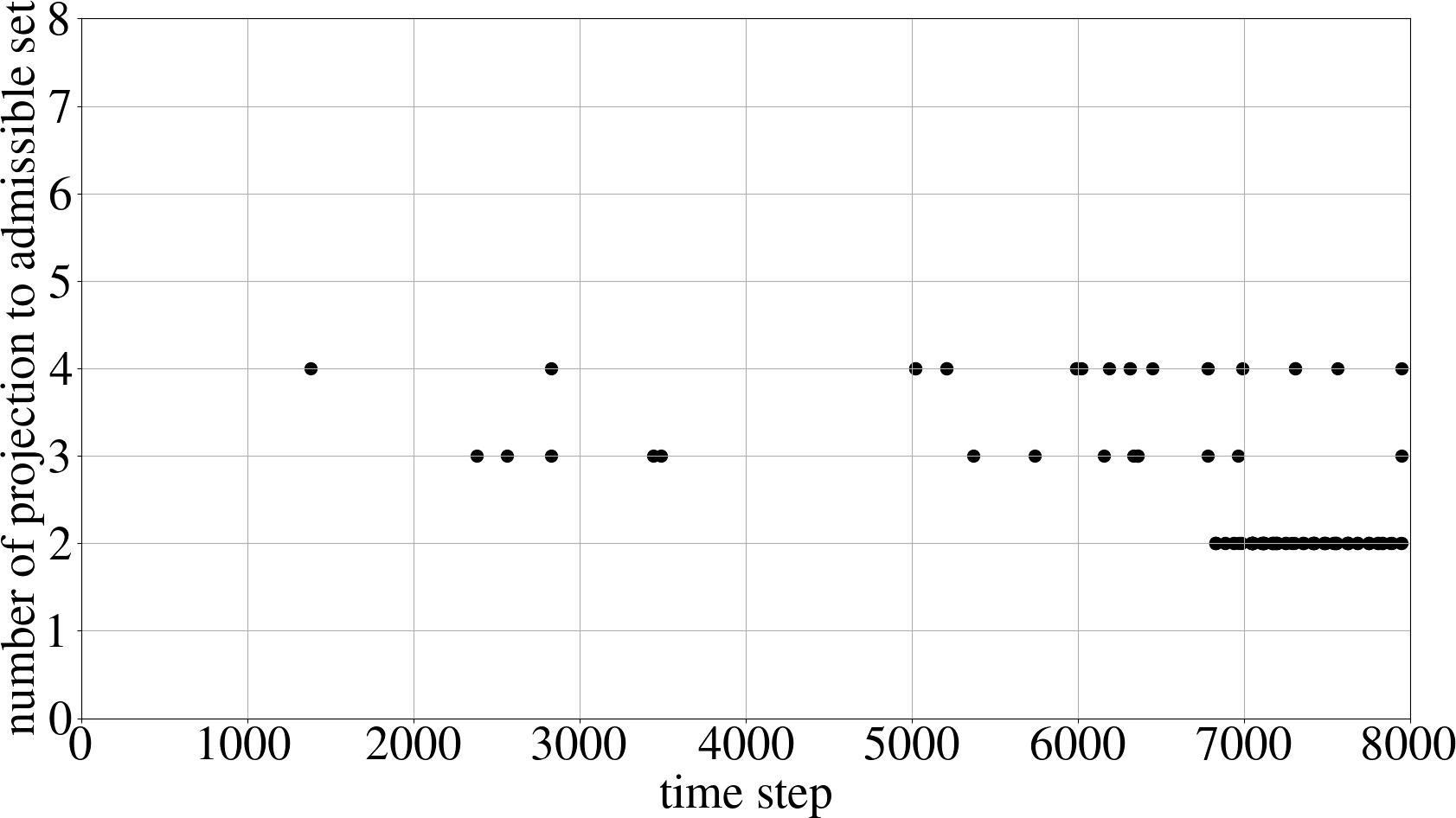} \\
\begin{sideways}{$\hspace{1.2cm} \text{\small $\ell^1$ limiter} \quad$}\end{sideways} &
\includegraphics[width=0.48\textwidth]{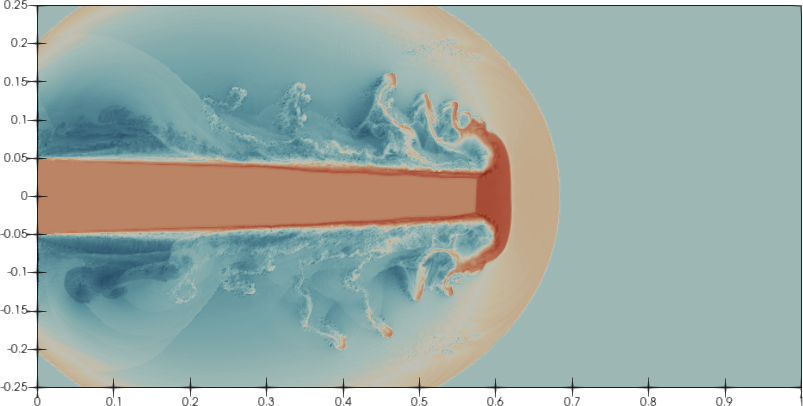} &
\includegraphics[width=0.058\textwidth]{astro_color_bar.png} &
\includegraphics[width=0.41\textwidth]{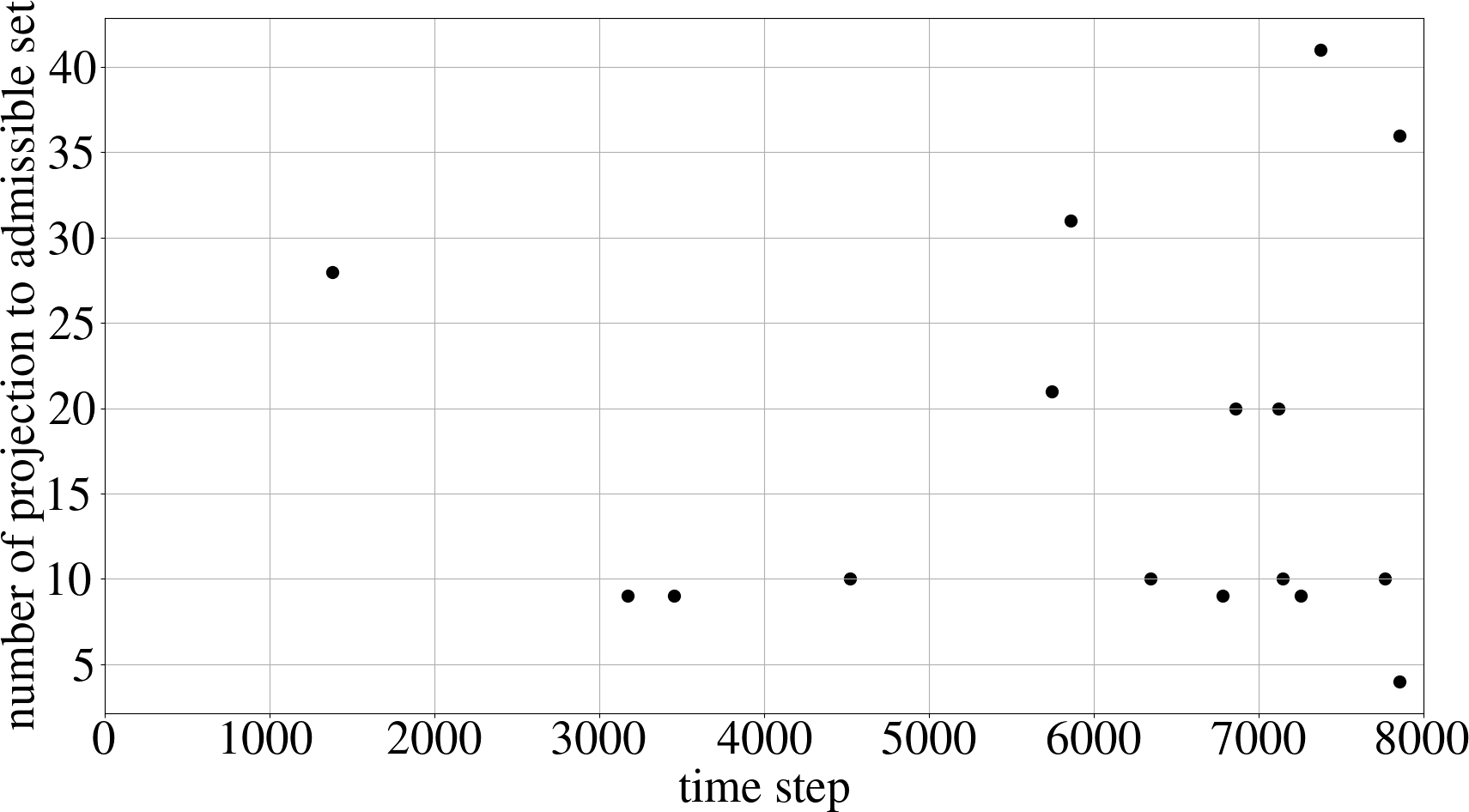} \\
\end{tabularx}
\caption{The Mach 2000 astrophysical jet.
Top row: $\ell^2$-norm limiter solved by with DYS. Bottom row: $\ell^1$-norm limiter solved by DRS nested with DYS.
Left: snapshot of density at $T=0.001$ with logarithmic color scale. Right: number of projections to admissible set is number of computations of proximal operators to the indicator function $\iota_{\Lambda_2}(\vecc{X})$ in the optimization solvers during each step of the fourth-order RK time marching, which is a fair way to compare computational cost between two optimization-based limiters.}
\label{fig:astrophysical_jet_density}
\end{figure}

\begin{remark}
For the  astrophysical jet problem as shown in Figure \ref{fig:astrophysical_jet_density}, the computational cost of solving the $\ell^1$  limiter \eqref{eq:opt_model_l1_Euler} is  still higher than the $\ell^2$ limiter \eqref{eq:invariant_domain_limiter3} whenever they are triggered. However, we can see that the cell average limiter is triggered less if using the $\ell^1$  limiter, even though the $\ell^1$  minimizer is not sparse as explained in previous sections. In other words, the total cost of  the $\ell^1$ limiter does not seem significantly higher than the $\ell^2$ limiter in this test, and more importantly the $\ell^1$ limiter is triggered less during the time evolution, which is more desirable. 
\end{remark}
%%%%%%%%%%%%%%%%%%%%%%%%%%%%%%%%%%%%%%%%%%%%%%%%%%%%%%%%%%%%%%%%%%%%%%%%%%%%%%%%%%%%%%%%%%%%%%%%%%%%%%%%%%%%%%%%%%%%%%%
\section{Concluding remarks}\label{sec:concluding_remarks}
%%%%%%%%%%%%%%%%%%%%%%%%%%%%%%%%%%%%%%%%%%%%%%%%%%%%%%%%%%%%%%%%%%%%%%%%%%%%%%%%%%%%%%%%%%%%%%%%%%%%%%%%%%%%%%%%%%%%%%%
In this paper, we have developed effective splitting methods for implementing $\ell^1$-norm and $\ell^2$-norm 
optimization based limiters  for enforcing the invariant domain in gas dynamics.
The proposed postprocessing for high order DG schemes consists of first applying  an optimization-based limiter to cell averages then by the Zhang-Shu limiter to modify quadrature point values. It is broadly applicable to DG methods with  various time-marching schemes, e.g., non-SSP Runge-Kutta methods.  Even though the optimization-based cell average limiters only preserve global conservation but not local conservation, the numerical tests suggest that it does not produce wrong shock locations for demanding problems such as blast waves and high speed flows. In practice, the $\ell^1$-norm limiter produces almost the same result as the  $\ell^2$-norm limiter. In general, $\ell^2$-norm limiter is cheaper to implement since it can be solved efficiently by the DYS method. On the other hand, for the astrophysical jet problem, we observe that the $\ell^1$-norm limiter is triggered less during time evolution, thus the $\ell^1$-norm limiter seems a better choice for this kind of problems.

%%%%%%%%%%%%%%%%%%%%%%%%%%%%%%%%%%%%%%%%%%%%%%%%%%%%%%%%%%%%%%%%%%%%%%%%%%%%%%%%%%%%%%%%%%%%%%%%%%%%%%%%%%%%%%%%%%%%%%%
\section*{CRediT authorship contribution statement}
\textbf{Chen Liu:} Writing - Review \& Editing, Writing – original draft, Methodology, Software, Visualization, Validation, Formal analysis, Investigation, Funding acquisition.
\textbf{Dionysis Milesis:} Writing - Review \& Editing, Formal analysis, Investigation.
\textbf{Xiangxiong Zhang:} Writing - Review \& Editing, Methodology, Formal analysis, Investigation, Conceptualization, Funding acquisition, Supervision, Project administration.
\textbf{Chi-Wang Shu:} Writing - Review \& Editing, Supervision, Methodology, Funding acquisition.

\section*{Data availability}
Data will be made available on request.

\section*{Declaration of competing interest}
The authors declare that they have no known competing financial interests or personal relationships that could have appeared to influence the work reported in this paper.

%%%%%%%%%%%%%%%%%%%%%%%%%%%%%%%%%%%%%%%%%%%%%%%%%%%%%%%%%%%%%%%%%%%%%%%%%%%%%%%%%%%%%%%%%%%%%%%%%%%%%%%%%%%%%%%%%%%%%%%
\section*{Acknowledgements}
C. Liu is supported by NSF DMS-2513106 and the Division of Research and Innovation (DRI) startup fund from University of Arkansas. C.-W. Shu is partially supported by NSF DMS-2309249. X. Zhang is partially supported by NSF DMS-2208518.

%% Appendix.
\appendix
\renewcommand{\thesection}{Appendix \Alph{section}}  % Set appendix format Appendix A.

%%%%%%%%%%%%%%%%%%%%%%%%%%%%%%%%%%%%%%%%%%%%%%%%%%%%%%%%%%%%%%%%%%%%%%%%%%%%%%%%%%%%%%%%%%%%%%%%%%%%%%%%%%%%%%%%%%%%%%%
\section{Computing proximal operator in $\ell^1$ scalar model}\label{appendix:compute_prox}
%%%%%%%%%%%%%%%%%%%%%%%%%%%%%%%%%%%%%%%%%%%%%%%%%%%%%%%%%%%%%%%%%%%%%%%%%%%%%%%%%%%%%%%%%%%%%%%%%%%%%%%%%%%%%%%%%%%%%%%
\begin{lemma}\label{lem:1}
For any given numbers $\gamma>0$ and $a\in\IR$, define $\mathrm{S}_\gamma(a) = \mathrm{sgn}(a)\max\{\abs{a}-\gamma, 0\}$. 
Let $f(z) = \gamma\abs{z} + \frac{1}{2}(z-a)^2$. We have: if $z > \mathrm{S}_\gamma(a)$, then $f(z)$ is strictly increasing; if $z < \mathrm{S}_\gamma(a)$, then $f(z)$ is strictly decreasing.
\end{lemma}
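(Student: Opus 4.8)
The plan is to identify $\mathrm{S}_\gamma(a)$ as the unique global minimizer of $f$ and then to read off strict monotonicity on the two half-lines from strict convexity. First I would note that $f(z)=\gamma\abs{z}+\tfrac12(z-a)^2$ is strictly convex on $\IR$, being the sum of the convex function $\gamma\abs{z}$ and the strictly convex function $\tfrac12(z-a)^2$. Since $f$ is also coercive, it has a unique minimizer $z^\star$, and for a strictly convex function of one real variable possessing a minimizer this immediately gives that $f$ is strictly decreasing on $(-\infty,z^\star)$ and strictly increasing on $(z^\star,+\infty)$. Thus the whole lemma reduces to proving $z^\star=\mathrm{S}_\gamma(a)$.

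To locate $z^\star$ I would use first-order optimality. On the smooth pieces $z\neq 0$ we have $f'(z)=\gamma\,\mathrm{sgn}(z)+(z-a)$, while at the kink the subdifferential is $\partial f(0)=[-\gamma-a,\ \gamma-a]$. The condition $0\in\partial f(z^\star)$ then splits into three cases: if $a>\gamma$, the stationary point lying in $\{z>0\}$ is $z^\star=a-\gamma>0$; if $a<-\gamma$, the stationary point lying in $\{z<0\}$ is $z^\star=a+\gamma<0$; and if $\abs{a}\le\gamma$, then $0\in\partial f(0)$ and $z^\star=0$. In each regime the value coincides with $\mathrm{S}_\gamma(a)=\mathrm{sgn}(a)\max\{\abs{a}-\gamma,0\}$, which finishes the identification.

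The delicate point will be the nonsmooth corner at $z=0$: one must treat it via the subdifferential (or one-sided derivatives) rather than a naive derivative, and check that no spurious stationary point lives in the ``wrong'' half-line. For example, when $a>\gamma$ one verifies $f'(z)=-\gamma+(z-a)<0$ for every $z<0$, so no critical point occurs there and the decrease-then-increase picture is consistent with $z^\star=a-\gamma$. Everything else is routine algebra. As an alternative that sidesteps the convexity reduction entirely, I could argue purely from the signs of the one-sided derivatives in the three regimes, which directly yields $f'<0$ below $\mathrm{S}_\gamma(a)$ and $f'>0$ above it, giving the claimed strict monotonicity without separately invoking strict convexity.
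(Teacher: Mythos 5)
Your proof is correct, but it takes a genuinely different route from the paper's. The paper proves the lemma by brute-force calculus: for each of the two monotonicity claims it splits into four cases according to the sign of $a$ and whether $\abs{a}\lessgtr\gamma$ (with further sub-cases when the interval straddles the kink at $z=0$), and in every case verifies directly that $f'(z)=\pm\gamma+(z-a)$ has the required sign on the smooth pieces. You instead invoke convex analysis: strict convexity plus coercivity give a unique minimizer $z^\star$, the standard fact that a strictly convex function is strictly decreasing to the left of its minimizer and strictly increasing to the right reduces the lemma to the identification $z^\star=\mathrm{S}_\gamma(a)$, and that identification follows from the optimality condition $0\in\partial f(z^\star)$, computed in three regimes ($a>\gamma$, $a<-\gamma$, $\abs{a}\le\gamma$). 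Your subdifferential computation $\partial f(0)=[-\gamma-a,\,\gamma-a]$ and the three case resolutions are all correct, and since $0\in\partial f(z^\star)$ is both necessary and sufficient for global optimality of a convex function, your worry about ``spurious stationary points'' is actually moot — no separate check is needed. What your approach buys is brevity, fewer cases, and conceptual transparency: it exhibits $\mathrm{S}_\gamma$ as the proximal (soft-thresholding) operator of $\gamma\abs{\cdot}$, which is precisely how the quantity is used later in the paper. What the paper's approach buys is complete elementarity — it needs nothing beyond one-variable calculus, no subdifferentials and no facts about convex functions — which may be why the authors chose it for an appendix lemma. Your closing remark that one could alternatively argue purely from signs of one-sided derivatives in the three regimes is, in essence, a streamlined version of the paper's own proof.
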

\begin{proof}
If $z > \mathrm{S}_\gamma(a)$, we have the following scenarios:
\begin{itemize}
\item Case~1: $a\geq0$ and $\abs{a}>\gamma$. \\
Then, we have $z>a-\gamma>0~\Rightarrow~f(z)=\gamma z + \frac{1}{2}(z-a)^2$, namely $f'(z) = z+\gamma-a>0$.
\item Case~2: $a\geq0$ and $\abs{a}\leq\gamma$. \\
Then, we have $z>0~\Rightarrow~f(z)=\gamma z + \frac{1}{2}(z-a)^2$, namely $f'(z) = z+\gamma-a>0$.
\item Case~3: $a<0$ and $\abs{a}>\gamma$. Then, we have $z>a+\gamma$. We discuss this in two sub-cases
\begin{itemize}
\item Sub-case~1: $z\geq0~\Rightarrow~f(z)=\gamma z + \frac{1}{2}(z-a)^2$, namely $f'(z) = z+\gamma-a>0$.
\item Sub-case~2: $a+\gamma<z<0\Rightarrow~f(z)=-\gamma z + \frac{1}{2}(z-a)^2$, namely $f'(z) = z-\gamma-a>0$.
\end{itemize}
\item Case~4: $a<0$ and $\abs{a}\leq\gamma$. \\
Then, we have $z>0~\Rightarrow~f(z)=\gamma z + \frac{1}{2}(z-a)^2$, namely $f'(z) = z+\gamma-a>0$.
\end{itemize}
Above all, when $z > \mathrm{S}_\gamma(a)$, the function $f(z)$ is increasing.
Similarly, if $z < \mathrm{S}_\gamma(a)$, we have the following scenarios:
\begin{itemize}
\item Case~1: $a\geq0$ and $\abs{a}>\gamma$. \\
Then, we have $z<a-\gamma$. We discuss this in two sub-cases
\begin{itemize}
\item Sub-case~1: $z\leq0~\Rightarrow~f(z)=-\gamma z + \frac{1}{2}(z-a)^2$, namely $f'(z) = z-\gamma-a<0$.
\item Sub-case~2: $0<z<a-\gamma\Rightarrow~f(z)=\gamma z + \frac{1}{2}(z-a)^2$, namely $f'(z) = z+\gamma-a<0$.
\end{itemize}
\item Case~2: $a\geq0$ and $\abs{a}\leq\gamma$. \\
Then, we have $z\leq0~\Rightarrow~f(z)=-\gamma z + \frac{1}{2}(z-a)^2$, namely $f'(z) = z-\gamma-a<0$.
\item Case~3: $a<0$ and $\abs{a}>\gamma$. \\
Then, we have $z<a+\gamma<0~\Rightarrow~f(z)=-\gamma z + \frac{1}{2}(z-a)^2$, namely $f'(z) = z-\gamma-a<0$.
\item Case~4: $a<0$ and $\abs{a}\leq\gamma$. \\
Then, we have $z<0~\Rightarrow~f(z)=-\gamma z + \frac{1}{2}(z-a)^2$, namely $f'(z) = z-\gamma-a<0$.
\end{itemize}
Above all, when $z < \mathrm{S}_\gamma(a)$, the function $f(z)$ is decreasing.
\end{proof}
\begin{lemma}
For scalar conservation laws, recall the set $\Lambda_2 = \{\vec{x}\in\IR^N\!:\, x_i \in [m, M],~ \forall i = 1,\cdots, N\}$ and in \eqref{eq:scalar_l1_gh_alter} the function $g(\vec{x}) = \norm{\vec{x}-\vec{u}}{1} + \iota_{\Lambda_2}(\vec{x})$. We have
\begin{align}
[\prox_g^\gamma(\vec{x})]_i = \max\{\min\{u_i + \mathrm{S}_\gamma(x_i-u_i), M\}, m\},
\end{align}
where the subscript $i$ denotes the $i$-th component in corresponding vector.
\end{lemma}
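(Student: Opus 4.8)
The plan is to start from the definition of the proximal operator, reduce the vector minimization to $N$ independent scalar problems, and then read off each scalar minimizer from Lemma~\ref{lem:1} after restoring the box constraint. By definition,
\[
\prox_g^\gamma(\vec{x}) = \operatorname*{argmin}_{\vec{z}} \|\vec{z}-\vec{u}\|_1 + \iota_{\Lambda_2}(\vec{z}) + \frac{1}{2\gamma}\|\vec{z}-\vec{x}\|_2^2 .
\]
Since $\gamma>0$, multiplying the objective by $\gamma$ does not change the minimizer, and $\gamma\,\iota_{\Lambda_2}=\iota_{\Lambda_2}$, so I would equivalently minimize $\gamma\|\vec{z}-\vec{u}\|_1 + \tfrac12\|\vec{z}-\vec{x}\|_2^2$ over $\vec{z}\in\Lambda_2$. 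Because the $\ell^1$ term, the quadratic term, and the set $\Lambda_2$ all decouple across components, this splits into $N$ scalar problems of the form $\min_{z\in[m,M]}\gamma|z-u_i| + \tfrac12(z-x_i)^2$.

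Next I would temporarily drop the constraint and identify the unconstrained scalar minimizer. Setting $w=z-u_i$ and $a=x_i-u_i$, the scalar objective becomes exactly $f(w)=\gamma|w|+\tfrac12(w-a)^2$, the function studied in Lemma~\ref{lem:1}. That lemma shows $f$ is strictly decreasing for $w<\mathrm{S}_\gamma(a)$ and strictly increasing for $w>\mathrm{S}_\gamma(a)$, hence $f$ is unimodal with the unique global minimizer at $w=\mathrm{S}_\gamma(a)$; translating back, the unconstrained minimizer in $z$ is $u_i+\mathrm{S}_\gamma(x_i-u_i)$.

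Finally I would reintroduce the constraint $z\in[m,M]$ and use unimodality to finish. Since $f$ decreases up to its minimizer and increases afterward, the constrained minimizer over $[m,M]$ is the clipping of the unconstrained one: if $u_i+\mathrm{S}_\gamma(x_i-u_i)\in[m,M]$ it is the answer; if it lies below $m$, then $f$ is strictly increasing throughout $[m,M]$ so the minimizer is $m$; if it lies above $M$, then $f$ is strictly decreasing throughout $[m,M]$ so the minimizer is $M$. These cases combine into $\max\{\min\{u_i+\mathrm{S}_\gamma(x_i-u_i), M\}, m\}$, the claimed formula. The only step needing care is this clipping argument, and it is valid precisely because Lemma~\ref{lem:1} guarantees \emph{strict} monotonicity on each side of $\mathrm{S}_\gamma(a)$, so that the restriction of $f$ to any subinterval of $[m,M]$ not containing the unconstrained minimizer is monotone; that observation is where the content of Lemma~\ref{lem:1} is used, and everything else is bookkeeping.
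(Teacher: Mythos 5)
Your proof is correct and follows essentially the same route as the paper's: reduce to decoupled scalar problems via the shift $z\mapsto z-u_i$, identify the unconstrained minimizer as $u_i+\mathrm{S}_\gamma(x_i-u_i)$, and invoke Lemma~\ref{lem:1}'s strict monotonicity on either side of $\mathrm{S}_\gamma(a)$ to justify clipping to $[m,M]$. The only cosmetic difference is that you deduce the soft-thresholding minimizer itself from Lemma~\ref{lem:1}'s unimodality, whereas the paper cites it as a known result and reserves Lemma~\ref{lem:1} for the clipping step.
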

\begin{proof}
By definition
\begin{align}
\prox_g^\gamma(\vec{x}) 
&= \underset{\vec{z}}{\mathrm{argmin}}\, \Big\{\gamma\norm{\vec{z}-\vec{u}}{1} + \gamma \iota_{\{\vec{z}:\,m \leq\vec{z}\leq M\}} + \frac{1}{2}\norm{\vec{z}-\vec{x}}{2}^2\Big\} \nonumber\\
&= \underset{\vec{z}}{\mathrm{argmin}}\, \Big\{\gamma\norm{\vec{z}-\vec{u}}{1} + \frac{1}{2}\norm{\vec{z}-\vec{u}-(\vec{x}-\vec{u})}{2}^2 + \gamma \iota_{\{\vec{z}:\,m-\vec{u} \leq\vec{z}-\vec{u}\leq M-\vec{u}\}}\Big\}\nonumber\\
&= \vec{u} + \underset{\vec{z}}{\mathrm{argmin}}\, \Big\{\gamma\norm{\vec{z}}{1} + \frac{1}{2}\norm{\vec{z}-(\vec{x}-\vec{u})}{2}^2 + \gamma \iota_{\{\vec{z}:\,m-\vec{u} \leq\vec{z}\leq M-\vec{u}\}}\Big\}\nonumber\\
&= \vec{u} + \underset{\vec{z}}{\mathrm{argmin}}\, \sum_{i=1}^n\Big(\gamma\abs{z_i} + \frac{1}{2}\abs{z_i-(x_i-u_i)}^2 + \gamma\iota_{\{z_i:\,m-u_i \leq z_i\leq M-u_i\}}\Big).
\end{align}
Therefore, we have
\begin{align}
[\prox_g^\gamma(\vec{x})]_i 
=  u_i + \underset{z_i}{\mathrm{argmin}}\,\Big\{ \gamma\abs{z_i} + \frac{1}{2}\abs{z_i-(x_i-u_i)}^2 + \gamma\iota_{\{z_i:\,m-u_i \leq z_i\leq M-u_i\}}\Big\}.
\end{align}
The following result is known, see \cite[page~209 soft-thresholding operator]{demanet2016eventual}. 
\begin{align}
\underset{z_i}{\mathrm{argmin}}\,\Big\{ \gamma\abs{z_i} + \frac{1}{2}\abs{z_i-(x_i-u_i)}^2\Big\} = \mathrm{S}_\gamma(x_i-u_i).
\end{align}
Therefore, we have
\begin{itemize}
\item If $\mathrm{S}_\gamma(x_i-u_i)\!\in\![m-u_i,M - u_i]$, namely $u_i+\mathrm{S}_\gamma(x_i-u_i) \!\in\! [m,M]$, then $[\prox_g^\gamma(\vec{x})]_i = u_i + \mathrm{S}_\gamma(x_i-u_i)$.
\item If $\mathrm{S}_\gamma(x_i-u_i) < m-u_i$, namely $u_i + \mathrm{S}_\gamma(x_i-u_i) < m$, then by Lemma~\ref{lem:1}, we know that the function $f(z) = \gamma\abs{z} + \frac{1}{2}\abs{z-(x_i-u_i)}^2$ is strictly increasing when $z>\mathrm{S}_\gamma(x_i-u_i)$. Notice $\mathrm{S}_\gamma(x_i-u_i) < m-u_i < M-u_i$, we have $[\prox_g^\gamma(\vec{x})]_i = u_i + (m - u_i) = m$.
\item If $\mathrm{S}_\gamma(x_i-u_i) > M-u_i$, namely $u_i + \mathrm{S}_\gamma(x_i-u_i) > M$, then by Lemma~\ref{lem:1}, we know that the function $f(z) = \gamma\abs{z} + \frac{1}{2}\abs{z-(x_i-u_i)}^2$ is strictly decreasing when $z<\mathrm{S}_\gamma(x_i-u_i)$. Notice $m-u_i < M-u_i < \mathrm{S}_\gamma(x_i-u_i)$. We have $[\prox_g^\gamma(\vec{x})]_i = u_i + (M - u_i) = M$.
\end{itemize}
Above all, we conclude the proof.
\end{proof}

\section{Projection to numerical admissible set in one dimension}\label{sec:1D_proj}
\subsection{Derivation from KKT conditions}
Let $\transpose{[u, v, w]}$ denote an out-of-bound cell average produced by a simulation in one-dimensional domain.
For a small prescribed $\varepsilon>0$, the set $G^\varepsilon$ in \eqref{invariant-domain} becomes 
\begin{align}
G^\varepsilon = \left\{\vec{U} = \transpose{[\rho,m,E]}\!:~ \rho \geq \varepsilon,~~ \rho e(\vec{U}) = E - \frac{m^2}{2\rho} \geq \varepsilon\right\}.
\end{align} 
Since the set $G^\varepsilon$ is closed and convex, the projection is unique. To find the projection of $\transpose{[u, v, w]}$ on set $G^\varepsilon$, we need to solve the following minimization problem: find $\transpose{[\rho, m, E]}$ that
\begin{align}
\min_{\rho,m,E}&~~ \frac{1}{2}\big(\abs{\rho - u}^2 + \abs{m - v}^2 + \abs{E - w}^2\big) \nonumber\\
\text{subject~to:}&~~ \varepsilon - \rho \leq 0 
\quad\text{and}\quad \varepsilon - E + \frac{m^2}{2\rho} \leq 0.
\end{align}

The exact solution can be derived from the Lagrangian 
\[L(\rho, m, E, \lambda, \mu)=\frac{1}{2}\big(\abs{\rho - u}^2 + \abs{m - v}^2 + \abs{E - w}^2\big)+\lambda (\varepsilon-\rho)+\mu(\varepsilon - E + \frac{m^2}{2\rho}),\]
and its  KKT conditions:
\begin{subequations}\label{eq:kkt_1d}
\begin{itemize}
\item Stationarity condition
\begin{align}
\rho - u - \lambda - \mu\frac{m^2}{2\rho^2} = 0,\label{eq:kkt_1d_1a}\\
m - v + \mu \frac{m}{\rho} = 0,\label{eq:kkt_1d_1b}\\
E - w - \mu = 0.\label{eq:kkt_1d_1c}
\end{align}
\item Complementary slackness
\begin{align}
\lambda(\varepsilon - \rho) = 0,\label{eq:kkt_1d_1d}\\
\mu\Big(\varepsilon - E + \frac{m^2}{2\rho}\Big) = 0.\label{eq:kkt_1d_1e}
\end{align}
\item Primal feasibility
\begin{align}
\varepsilon - \rho \leq 0, \label{eq:kkt_1d_f}\\
\varepsilon - E + \frac{m^2}{2\rho} \leq 0.\label{eq:kkt_1d_1g}
\end{align}
\item Dual feasibility
\begin{align}
\lambda \geq 0, \label{eq:kkt_1d_1h}\\
\mu \geq 0.\label{eq:kkt_1d_1i}
\end{align}
\end{itemize}
\end{subequations}
From the dual feasibility, depending on whether $\lambda$ and $\mu$ are zero or positive, there are four cases:
\paragraph{\bf Case~1 ($\mu = 0$ and $\lambda > 0$)}
When $\mu$ is zero, the \eqref{eq:kkt_1d_1a}-\eqref{eq:kkt_1d_1c} gives $\rho = u + \lambda$, $m = v$, and $E = w$. When $\lambda > 0$, \eqref{eq:kkt_1d_1d} implies $\rho = \varepsilon$. From \eqref{eq:kkt_1d_1a}, we get $\lambda = \varepsilon - u$. 
Thus, if $\transpose{[\varepsilon, v, w]}$ is a solution of \eqref{eq:kkt_1d}, the constraints $\varepsilon > u$ and \eqref{eq:kkt_1d_1g} need to be satisfied.

\paragraph{\bf Case~2 ($\mu = 0$ and $\lambda = 0$)}
When both $\mu$ and $\lambda$ are zero, the \eqref{eq:kkt_1d_1a}-\eqref{eq:kkt_1d_1c} gives: $\rho = u$, $m = v$, and $E = w$. 
Thus, if $\transpose{[u, v, w]}$ is a solution of \eqref{eq:kkt_1d}, the given point should belong to the set $G^\varepsilon$, i.e., the projection point is itself.

\paragraph{\bf Case~3 ($\mu > 0$ and $\lambda > 0$)}
When both $\mu$ and $\lambda$ are positive, the \eqref{eq:kkt_1d_1d} gives $\rho = \varepsilon$ and \eqref{eq:kkt_1d_1e} becomes
\begin{align}\label{eq:1d_case3_E}
E = \frac{1}{2\varepsilon}m^2 + \varepsilon.
\end{align}
If $v = 0$, by \eqref{eq:kkt_1d_1b}, we have $m = 0$, which further implies $E = \varepsilon$. From \eqref{eq:kkt_1d_1c}, we get $\mu = \varepsilon - w$ and from \eqref{eq:kkt_1d_1a}, we get $\lambda = \varepsilon - u$.
Thus, if $\transpose{[\varepsilon, 0, \varepsilon]}$ is a solution of \eqref{eq:kkt_1d}, the constraints $\varepsilon > u$ and $\varepsilon > w$ need to be satisfied.
\par
If $v \neq 0$, then $m \neq 0$. Otherwise, \eqref{eq:kkt_1d_1b} gives $v = 0$, which leads to a contradiction.
Substituting $\rho = \varepsilon$ into \eqref{eq:kkt_1d_1b} to solve $\mu$, and further substituting the result into \eqref{eq:kkt_1d_1a}, we have 
\begin{align}\label{eq:1d_case3_mu_lamb}
\mu = \varepsilon\Big(\frac{v}{m} - 1\Big)
\quad\text{and}\quad
\lambda = \varepsilon - u - \frac{1}{2\varepsilon}m(v - m).
\end{align}
Combine \eqref{eq:kkt_1d_1c} with \eqref{eq:1d_case3_E} and \eqref{eq:1d_case3_mu_lamb}, we obtain
\begin{align}
\frac{1}{2\varepsilon}m^2 + \varepsilon = E = \mu + w = \varepsilon\Big(\frac{v}{m} - 1\Big) + w.
\end{align}
Multiplying $2\varepsilon m$ on both sides above, we get a cubic equation with respect to $m$ as follows.
\begin{align}
m^3 + (4\varepsilon^2 - 2\varepsilon w)m - 2\varepsilon^2 v = 0.
\end{align}
After getting all the real roots of using the method in \ref{appendix:cubic_root}, we compute $E$ from \eqref{eq:1d_case3_E}, and $\mu, \lambda$ from \eqref{eq:1d_case3_mu_lamb}.
Thus, if the result here is a solution of \eqref{eq:kkt_1d}, the constraints $\mu > 0$ and $\lambda > 0$ for \eqref{eq:1d_case3_mu_lamb} need to be satisfied.

\paragraph{\bf Case~4 ($\mu > 0$ and $\lambda = 0$)}
When $\mu$ is positive and $\lambda$ is zero, the KKT condition \eqref{eq:kkt_1d} simplify to $\rho \geq \varepsilon$ and
\begin{subequations}\label{eq:kkt_1d_2}
\begin{align}
\rho - u - \mu\frac{m^2}{2\rho^2} &= 0,\label{eq:kkt_1d_2a}\\
m - v + \mu \frac{m}{\rho} &= 0,\label{eq:kkt_1d_2b}\\
E - w - \mu &= 0,\label{eq:kkt_1d_2c}\\
\varepsilon - E + \frac{m^2}{2\rho} &= 0.\label{eq:kkt_1d_2d}
\end{align}
\end{subequations}
If $v = 0$, by \eqref{eq:kkt_1d_2b}, we have $m = 0$. By \eqref{eq:kkt_1d_2a} and \eqref{eq:kkt_1d_2d}, we get $\rho = u$ and $E = \varepsilon$. From \eqref{eq:kkt_1d_2c}, we get $\mu = \varepsilon - w$.
Thus, if $\transpose{[u, 0, \varepsilon]}$ is a solution of \eqref{eq:kkt_1d}, the constraints $\varepsilon > w$ and $\varepsilon \leq u$ need to be satisfied.
\par
If $v \neq 0$, then $m \neq 0$. Otherwise, \eqref{eq:kkt_1d_2b} gives $v = 0$, which leads to a contradiction.
Substituting into \eqref{eq:kkt_1d_2b} and \eqref{eq:kkt_1d_2a}, we have
\begin{align}
\rho - u - \frac{m}{2\rho}\Big(\mu\frac{m}{\rho}\Big) = 0 
\quad\Rightarrow\quad
\frac{m}{\rho}(v - m) = 2(\rho - u),
\end{align}
which is a quadratic equation with respect to $m$, namely $m^2 - vm + 2\rho^2 - 2u\rho = 0$. Thus, let $\Delta = -8\rho^2 + 8u\rho + v^2$ denote the discriminant. We have
\begin{align}\label{eq:1d_case4_roots_m}
m = \frac{v}{2} - \frac{1}{2}\sqrt{\Delta}
\quad\text{or}\quad
m = \frac{v}{2} + \frac{1}{2}\sqrt{\Delta}.
\end{align}
From \eqref{eq:kkt_1d_2c} and \eqref{eq:kkt_1d_2d}, we get 
\begin{align}
E &= \varepsilon + \frac{m^2}{2\rho},\\
\mu &= \varepsilon + \frac{m^2}{2\rho} - w.\label{eq:1d_case4_mu}
\end{align}
Therefore, once we compute $\rho$, the \eqref{eq:kkt_1d_2} is solved.
Substitute \eqref{eq:1d_case4_mu} into \eqref{eq:kkt_1d_2b}. After multiplying $2\rho^2$, we end up with a quadratic equation with respect to $\rho$, as follows. Notice that $v \neq 0$ implies $m \neq v$.
\begin{align}\label{eq:1d_case4_4}
2(m - v)\rho^2 + 2(\varepsilon - w)m\rho + m^3 = 0.
\end{align}
To solve $\rho$, let us first substitute $m = \frac{v}{2} - \frac{1}{2}\sqrt{\Delta}$ into above. The other root in \eqref{eq:1d_case4_roots_m} can be addressed in the same manner. We get 
\begin{align}\label{eq:1d_case4_5}
\sqrt{\Delta}(3v^2 + \Delta + 8(\varepsilon-w)\rho + 8\rho^2) = v^3 + 3v\Delta + 8(\varepsilon - w)v\rho - 8v\rho^2.
\end{align}
Recall the discriminant $\Delta = -8\rho^2 + 8u\rho + v^2$. Square both sides above and after some simplification, we obtain
\begin{align}\label{eq:case4_1d_equation_rho}
4\big(2v^2 + (\varepsilon + u - w)^2\big)\rho^2 
- 4u\big(2v^2 + (\varepsilon + u - w)^2\big)\rho 
+ 2uv^2(w-\varepsilon) - v^4 = 0.
\end{align}
The two roots of above quadratic equation are:
\begin{subequations}\label{eq:case4_1d_rho}
\begin{align}
\rho &= \frac{u}{2} + \frac{1}{2} \frac{\sqrt{u^2(2v^2 + (\varepsilon + u - w)^2) - 2uv^2(w-\varepsilon) + v^4}}{\sqrt{2v^2 + (\varepsilon + u - w)^2}},\\
\quad\text{or}\quad
\rho &= \frac{u}{2} - \frac{1}{2} \frac{\sqrt{u^2(2v^2 + (\varepsilon + u - w)^2) - 2uv^2(w-\varepsilon) + v^4}}{\sqrt{2v^2 + (\varepsilon + u - w)^2}}.
\end{align}
\end{subequations}
For the second root in \eqref{eq:1d_case4_roots_m}, substitute $m = \frac{v}{2} + \frac{1}{2}\sqrt{\Delta}$ into \eqref{eq:1d_case4_4}. After simplification, we end up with exactly the same equation as \eqref{eq:case4_1d_equation_rho}, namely the expressions in \eqref{eq:case4_1d_rho} are still the roots.
Thus, if the result here is a solution of \eqref{eq:kkt_1d}, the constraints $\mu > 0$ for \eqref{eq:1d_case4_mu} and $\rho \geq \varepsilon$ for \eqref{eq:case4_1d_rho} need to be satisfied.

\subsection{Algorithm of projection onto the invariant domain set}
\label{appendix:1D_projection}
Given $\transpose{[x,y,z]}\notin G^\varepsilon$, find the projection of $\transpose{[x,y,z]}$ on set $G^\varepsilon$.\\
Step~1. Compute candidate points from KKT conditions.
\begin{itemize}
\item The point $\transpose{[\rho,m,E]} = \transpose{[\varepsilon, y, z]}$ is a candidate, if $x < \varepsilon$ and $z-\frac{y^2}{2\varepsilon} \geq \varepsilon$.
\item The point $\transpose{[\rho,m,E]} = \transpose{[\varepsilon, 0, \varepsilon]}$ is a candidate, if $x < \varepsilon$, $y = 0$, and $z < \varepsilon$.
\item If $y\neq 0$, solve cubic equation $m^3 + (4\varepsilon^2 - 2\varepsilon z)m - 2\varepsilon^2 y = 0$ to obtain all real roots. Examine each real root individually. 
Let $m_r$ denote a real root and let $E_r = \frac{m_r^2}{2\varepsilon} + \varepsilon$. 
The point $\transpose{[\rho,m,E]} = \transpose{[\varepsilon, m_r, E_r]}$ is a candidate, if $\frac{y}{m_r} > 1$ and $2\varepsilon x + m_r(y-m_r) < 2\varepsilon^2$.
\item The point $\transpose{[\rho,m,E]} = \transpose{[x, 0, \varepsilon]}$ is a candidate, if $x\geq\varepsilon$, $y = 0$, and $z < \varepsilon$.
\item Compute $\rho_1$ and $\rho_2$ using the following formulas only if they are real.  
\begin{align*}
\rho_1 &= \frac{x}{2} + \frac{1}{2} \frac{\sqrt{x^2(2y^2 + (\varepsilon + x - z)^2) - 2xy^2(z-\varepsilon) + y^4}}{\sqrt{2y^2 + (\varepsilon + x - z)^2}},\\
\rho_2 &= \frac{x}{2} - \frac{1}{2} \frac{\sqrt{x^2(2y^2 + (\varepsilon + x - z)^2) - 2xy^2(z-\varepsilon) + y^4}}{\sqrt{2y^2 + (\varepsilon + x - z)^2}}.
\end{align*}
For real values of $\rho_1$ and $\rho_2$, compute $m_1$ and $m_2$ using the following formulas only if they are real.
\begin{align*}
m_1(\rho) = \frac{1}{2}y - \frac{1}{2}\sqrt{-8\rho^2 + 8x\rho + y^2}
\quad\text{and}\quad
m_2(\rho) = \frac{1}{2}y + \frac{1}{2}\sqrt{-8\rho^2 + 8x\rho + y^2}.
\end{align*}
Then, for real point:
\begin{itemize}
\item The point $\transpose{[\rho,m,E]} = \transpose{[\rho_1, m_1(\rho_1), \varepsilon + \frac{m_1(\rho_1)^2}{2\rho_1}]}$ is a candidate, if $\rho_1\geq\varepsilon$ and $\varepsilon + \frac{m_1(\rho_1)^2}{2\rho_1} > z$.
\item The point $\transpose{[\rho,m,E]} = \transpose{[\rho_2, m_1(\rho_2), \varepsilon + \frac{m_1(\rho_2)^2}{2\rho_2}]}$ is a candidate, if $\rho_2\geq\varepsilon$ and $\varepsilon + \frac{m_1(\rho_2)^2}{2\rho_2} > z$.
\item The point $\transpose{[\rho,m,E]} = \transpose{[\rho_1, m_2(\rho_1), \varepsilon + \frac{m_2(\rho_1)^2}{2\rho_1}]}$ is a candidate, if $\rho_1\geq\varepsilon$ and $\varepsilon + \frac{m_2(\rho_1)^2}{2\rho_1} > z$.
\item The point $\transpose{[\rho,m,E]} = \transpose{[\rho_2, m_2(\rho_2), \varepsilon + \frac{m_2(\rho_2)^2}{2\rho_2}]}$ is a candidate, if $\rho_2\geq\varepsilon$ and $\varepsilon + \frac{m_2(\rho_2)^2}{2\rho_2} > z$.
\end{itemize}
\end{itemize}
Step~2. Substitute all candidate points into the objective function
\begin{align*}
\abs{\rho - x}^2 + \abs{m - y}^2 + \abs{E - z}^2.
\end{align*}
Pick the point $\transpose{[\rho,m,E]}$ from the candidate points such that minimizes the value of the objective function, which gives the projection point.

\section{Projection to numerical admissible set in two dimensions}\label{sec:2D_proj}
\subsection{Derivation from KKT conditions}
Let $\transpose{[u, v_1, v_2, w]}$ denote an out-of-bound cell average produced by a simulation in two-dimensional domain.
For a small prescribed $\varepsilon>0$, the numerical admissible set $G^\varepsilon$ in \eqref{invariant-domain} becomes 
\begin{align}
G^\varepsilon = \left\{\vec{U} = \transpose{[\rho, m_1, m_2, E]}\!:~ \rho \geq \varepsilon,~~ \rho e(\vec{U}) = E - \frac{m_1^2 + m_2^2}{2\rho} \geq \varepsilon\right\}.
\end{align} 
The projection point on the closed convex set $G^\varepsilon$ is unique.
To find the projection of $\transpose{[u,v_1,v_2,w]}$, we need to solve the following minimization problem: find $\transpose{[\rho, m_1, m_2, E]}$ that
\begin{align}
\min_{\rho,m_1,m_2,E}&~~ \frac{1}{2}\big(\abs{\rho - u}^2 + \abs{m_1 - v_1}^2 + \abs{m_2 - v_2}^2 + \abs{E - w}^2\big) \nonumber\\
\text{subject~to:}&~~ \varepsilon - \rho \leq 0 
\quad\text{and}\quad \varepsilon - E + \frac{m_1^2 + m_2^2}{2\rho} \leq 0.
\end{align}
We derive an exact solution using the following KKT conditions:
\begin{subequations}\label{eq:2d_KKT}
\begin{itemize}
\item Stationarity condition
\begin{align}
\rho - u - \lambda - \mu\frac{m_1^2 + m_2^2}{2\rho^2} = 0,\label{eq:1a}\\
m_1 - v_1 + \mu \frac{m_1}{\rho} = 0,\label{eq:1b}\\
m_2 - v_2 + \mu \frac{m_2}{\rho} = 0,\label{eq:1b2}\\
E - w - \mu = 0.\label{eq:1c}
\end{align}
\item Complementary slackness
\begin{align}
\lambda(\varepsilon - \rho) = 0,\label{eq:1d}\\
\mu\Big(\varepsilon - E + \frac{m_1^2 + m_2^2}{2\rho}\Big) = 0.\label{eq:1e}
\end{align}
\item Primal feasibility
\begin{align}
\varepsilon - \rho \leq 0, \label{eq:f}\\
\varepsilon - E + \frac{m_1^2 + m_2^2}{2\rho} \leq 0.\label{eq:1g}
\end{align}
\item Dual feasibility
\begin{align}
\lambda \geq 0,\label{eq:1h}\\
\mu \geq 0.\label{eq:1i}
\end{align}
\end{itemize}
\end{subequations}
From the dual feasibility, depending on $\lambda$ and $\mu$ are zero or positive, we have four different scenarios. Let us discuss case by case.

\paragraph{\bf Case~1 ($\mu = 0$ and $\lambda > 0$)}
When $\mu$ is zero, the \eqref{eq:1a}-\eqref{eq:1c} gives $\rho = u + \lambda$, $m_1 = v_1$, $m_2 = v_2$, and $E = w$. When $\lambda > 0$, the \eqref{eq:1d} implies $\rho = \varepsilon$. From \eqref{eq:1a}, we get $\lambda = \varepsilon - u$. 
Thus, if $\transpose{[\varepsilon, v_1, v_2, w]}$ is a solution of \eqref{eq:2d_KKT}, the constraints $\varepsilon > u$ and \eqref{eq:1g} need to be satisfied.

\paragraph{\bf Case~2 ($\mu = 0$ and $\lambda = 0$)}
When both $\mu$ and $\lambda$ are zero, the \eqref{eq:1a}-\eqref{eq:1c} gives $\rho = u$, $m_1 = v_1$, $m_2 = v_2$, and $E = w$. 
Thus, if $\transpose{[u, v_1, v_2, w]}$ is a solution of \eqref{eq:2d_KKT}, the given point already belongs to the set $G^\varepsilon$, i.e., the projection point is itself.

\paragraph{\bf Case~3 ($\mu > 0$ and $\lambda > 0$)}
When both $\mu$ and $\lambda$ are positive, the \eqref{eq:1d} gives $\rho = \varepsilon $ and \eqref{eq:1e} becomes
\begin{align}\label{eq:2d_case3_E}
E = \frac{1}{2\varepsilon}(m_1^2 + m_2^2) + \varepsilon.
\end{align}
If $v_1 = 0$ and $v_2 = 0$, by \eqref{eq:1b} and \eqref{eq:1b2}, we have $m_1 = 0$ and $m_2 = 0$, which further implies $E = \varepsilon$. From \eqref{eq:1c}, we get $\mu = \varepsilon - w$ and from \eqref{eq:1a}, we get $\lambda = \varepsilon - u$. 
Thus, if $\transpose{[\varepsilon, 0, 0, \varepsilon]}$ is a solution of \eqref{eq:2d_KKT}, the constraints $\varepsilon > u$ and $\varepsilon > w$ need to be satisfied.
\par
If $v_1 = 0$ and $v_2 \neq 0$, then \eqref{eq:1b} gives $m_1 = 0$ and \eqref{eq:1b2} gives $m_2 \neq 0$. Therefore, \eqref{eq:1a}-\eqref{eq:1c} become
\begin{subequations}
\begin{align}
\rho - u - \lambda - \mu\frac{m_2^2}{2\rho^2} = 0,\\
m_2 - v_2 + \mu \frac{m_2}{\rho} = 0,\\
E - w - \mu = 0.
\end{align}
\end{subequations}
Furthermore, the KKT condition \eqref{eq:2d_KKT} degenerates to the associated one-dimensional case, which can be solved by using the method in Section~\ref{sec:1D_proj} Case~3.
\par
If $v_1 \neq 0$, multiply \eqref{eq:1b} by $v_2$ and multiply \eqref{eq:1b2} by $v_1$, respectively. We have
\begin{align}\label{eq:new_old_momentum}
\begin{rcases*}
\Big(1+\dfrac{\mu}{\rho}\Big) m_1 v_2 = v_1 v_2\\
\Big(1+\dfrac{\mu}{\rho}\Big) m_2 v_1 = v_1 v_2
\end{rcases*}
\quad\Rightarrow\quad
m_1 v_2 = m_2 v_1
\quad\Rightarrow\quad
m_2 = \frac{v_2}{v_1}m_1.
\end{align}
When $v_1 \neq 0$, it implies $m_1 \neq 0$. Otherwise, \eqref{eq:1b} gives $v_1 = 0$, which leads to a contradiction. Let $a = 1 + v_2^2/v_1^2$. Substituting $\rho = \varepsilon$ into \eqref{eq:1b} to solve $\mu$, and further substituting the result into \eqref{eq:1a}, we have
\begin{align}\label{eq:2d_case3_mu_lamb}
\mu = \varepsilon\Big(\frac{v_1}{m_1} - 1\Big)
\quad\text{and}\quad
\lambda = \varepsilon - u - \frac{a}{2\varepsilon}m_1(v_1 - m_1).
\end{align}
By substituting \eqref{eq:new_old_momentum} into \eqref{eq:2d_case3_E} and combining the result with \eqref{eq:1c} and \eqref{eq:2d_case3_mu_lamb}, we obtain
\begin{align}\label{eq:2d_case3_E2}
\frac{a}{2\varepsilon}m_1^2 + \varepsilon = E = \mu + w = \varepsilon\Big(\frac{v_1}{m_1} - 1\Big) + w.
\end{align}
Multiplying $2\varepsilon m_1$ on both sides above, we get a cubic equation with respect to $m_1$ as follows.
\begin{align*}
a m_1^3 + (4\varepsilon^2 - 2\varepsilon w)m_1 - 2\varepsilon^2 v_1 = 0.
\end{align*}
After getting all the real roots of using the method in \ref{appendix:cubic_root}, we compute $m_2$ form \eqref{eq:new_old_momentum}, $E$ from \eqref{eq:2d_case3_E2}, and $\mu, \lambda$ from \eqref{eq:2d_case3_mu_lamb}. 
Thus, if the result here is a solution of \eqref{eq:2d_KKT}, the constraints $\mu > 0$ and $\lambda > 0$ for \eqref{eq:2d_case3_mu_lamb} need to be satisfied.

\paragraph{\bf Case~4 ($\mu > 0$ and $\lambda = 0$)}
When $\mu$ is positive and $\lambda$ is zero, the KKT condition \eqref{eq:2d_KKT} simplify to $\rho \geq \varepsilon$ and
\begin{subequations}\label{eq:case4_2d_kkt}
\begin{align}
\rho - u - \mu\frac{m_1^2 + m_2^2}{2\rho^2} &= 0,\label{eq:2a}\\
m_1 - v_1 + \mu \frac{m_1}{\rho} &= 0,\label{eq:2b}\\
m_2 - v_2 + \mu \frac{m_2}{\rho} &= 0,\label{eq:2b1}\\
E - w - \mu &= 0,\label{eq:2c}\\
\varepsilon - E + \frac{m_1^2 + m_2^2}{2\rho} &= 0.\label{eq:2d}
\end{align}
\end{subequations}
If $v_1 = 0$ and $v_2 = 0$, we have $m_1 = 0$ and $m_2 = 0$. By \eqref{eq:2a} and \eqref{eq:2d}, we get $\rho = u$ and $E = \varepsilon$. From \eqref{eq:2c}, we get $\mu = \varepsilon - w$.
Thus, if $\transpose{[u, 0, 0, \varepsilon]}$ is a solution of \eqref{eq:2d_KKT}, the constraints $\varepsilon > w$ and $\varepsilon \leq u$ need to be satisfied.
\par
If $v_1 = 0$ and $v_2 \neq 0$, then \eqref{eq:2b} gives $m_1 = 0$ and \eqref{eq:2b1} gives $m_2 \neq 0$. Thus, \eqref{eq:2a}-\eqref{eq:2d} become
\begin{subequations}
\begin{align}
\rho - u - \mu\frac{m_2^2}{2\rho^2} &= 0,\\
m_2 - v_2 + \mu \frac{m_2}{\rho} &= 0,\\
E - w - \mu &= 0,\label{eq:2c_2}\\
\varepsilon - E + \frac{m_2^2}{2\rho} &= 0,
\end{align}
\end{subequations}
which can be solved by using the method in Section~\ref{sec:1D_proj} Case~4.
\par
If $v_1 \neq 0$, recall $a = 1 + v_2^2/v_1^2$ and $m_2v_1 = m_1v_2$, by eliminating $m_2$ in \eqref{eq:2a} and substituting \eqref{eq:2b} into \eqref{eq:2a}, we have
\begin{align*}
\rho - u - a\frac{m_1}{2\rho} \Big(\mu\frac{m_1}{\rho}\Big) = 0 
\quad\Rightarrow\quad
a\frac{m_1}{\rho} (v_1 - m_1) = 2(\rho - u),
\end{align*}
which is a quadratic equation with respect to $m_1$, namely $am_1^2 - av_1m_1 + 2\rho^2 - 2u\rho = 0$.
Thus, let $\Delta = -8a\rho^2 + 8au\rho + a^2v_1^2$ denote the discriminant. We have
\begin{align}\label{eq:case4_2d_m1}
m_1 = \frac{v_1}{2} - \frac{1}{2a}\sqrt{\Delta}
\quad\text{or}\quad
m_1 = \frac{v_1}{2} + \frac{1}{2a}\sqrt{\Delta}.
\end{align}
Similarly, using $m_2 = m_1 v_2/v_1$ to eliminate $m_2$ in \eqref{eq:2d} and adding \eqref{eq:2c_2} and \eqref{eq:2d} together, we have 
\begin{align}
E &= \varepsilon + a\frac{m_1^2}{2\rho}, \\
\mu &= \varepsilon + a\frac{m_1^2}{2\rho} - w. \label{eq:3b}
\end{align}
Therefore, once we compute $\rho$, the \eqref{eq:case4_2d_kkt} is solved.
Substitute \eqref{eq:3b} into \eqref{eq:2b}. After multiplying $2\rho^2$, we end up with a quadratic equation with respect to $\rho$, as follows. Notice that $v_1\neq0$ implies $m_1 \neq v_1$. 
\begin{align}\label{eq:4}
2(m_1 - v_1)\rho^2 + 2(\varepsilon - w)m_1\rho + am_1^3 = 0.
\end{align}
To solve $\rho$, let us first substitute $m_1 = \frac{v_1}{2} - \frac{1}{2a}\sqrt{\Delta}$ into above. The other root in \eqref{eq:case4_2d_m1} can be addressed in the same manner. We get 
\begin{align}\label{eq:5}
\sqrt{\Delta}\Big(3v_1^2 + \frac{\Delta}{a^2} + 8(\varepsilon-w)\frac{\rho}{a} + \frac{8}{a}\rho^2\Big) = av_1^3 + \frac{3}{a}v_1\Delta + 8(\varepsilon - w)v_1\rho - 8v_1\rho^2.
\end{align}
Recall the discriminant $\Delta = -8a\rho^2 + 8au\rho + a^2v_1^2$. Square both sides above and after some simplification, we obtain
\begin{align}\label{eq:case4_2d_equation_rho}
4\Big(2v_1^2 + \frac{1}{a}(\varepsilon + u - w)^2\Big)\rho^2 
- 4u\Big(2v_1^2 + \frac{1}{a}(\varepsilon + u - w)^2\Big)\rho 
+ 2uv_1^2(w-\varepsilon) - av_1^4 = 0.
\end{align}
The two roots of above quadratic equation are:
\begin{subequations}\label{eq:case4_2d_rho}
\begin{align}
\rho &= \frac{u}{2} + \frac{1}{2} \frac{\sqrt{u^2(2v_1^2 + \frac{1}{a}(\varepsilon + u - w)^2) - 2uv_1^2(w-\varepsilon) + av_1^4}}{\sqrt{2v_1^2 + \frac{1}{a}(\varepsilon + u - w)^2}},\\
\quad\text{or}\quad
\rho &= \frac{u}{2} - \frac{1}{2} \frac{\sqrt{u^2(2v_1^2 + \frac{1}{a}(\varepsilon + u - w)^2) - 2uv_1^2(w-\varepsilon) + av_1^4}}{\sqrt{2v_1^2 + \frac{1}{a}(\varepsilon + u - w)^2}}.
\end{align}
\end{subequations}
For the second root in \eqref{eq:case4_2d_m1}, substitute $m_1 = \frac{v_1}{2} + \frac{1}{2a}\sqrt{\Delta}$ into \eqref{eq:4}. After simplification, we end up with exactly the same equation as \eqref{eq:case4_2d_equation_rho}, namely the expressions in \eqref{eq:case4_2d_rho} are still the roots.
Thus, if the result here is a solution of \eqref{eq:2d_KKT}, the constraints $\mu > 0$ for \eqref{eq:3b} and $\rho \geq \varepsilon$ for \eqref{eq:case4_2d_rho} need to be satisfied.
\par 

Finally, among all the points listed above, we pick the point $\transpose{[\rho, m_1, m_2, E]}$  which gives the smallest value of the objective function, and it is the projection point.

\subsection{Algorithm of projection onto the invariant domain set}\label{appendix:2D_projection}
Given $\transpose{[x, y_1, y_2, z]}\notin G^\varepsilon$, find the projection $\transpose{[\rho,m_1,m_2,E]}$ on set $G^\varepsilon$. 
Without loss of generality, let us assume $\abs{y_1} \geq \abs{y_2}$. A summary of the resulting algorithm is listed below. \\
Step~1. Compute candidate points from KKT conditions.
\begin{itemize}
%% Case 1.
\item The point $\transpose{[\rho,m_1,m_2,E]} = \transpose{[\varepsilon, y_1, y_2, z]}$ is a candidate, if $x < \varepsilon$ and $z - \frac{1}{2\varepsilon}(y_1^2 + y_2^2) \geq \varepsilon$.
%% Case 3.
\item The point $\transpose{[\rho,m_1,m_2,E]} = \transpose{[\varepsilon, 0, 0, \varepsilon]}$ is a candidate, if $x < \varepsilon$, $y_1 = 0$, $y_2 = 0$, and $z < \varepsilon$.
\item If $y_1 \neq0$, solve the cubic equation $am_1^3 + (4\varepsilon^2 - 2\varepsilon z)m_1 - 2\varepsilon^2 y_1 = 0$ with $a = 1 + y_2^2/y_1^2$ to obtain all real roots. Examine each real root individually. 
Let $m_{1r}$ denote a real root. Then the point 
\begin{align*}
\transpose{[\rho,m_1,m_2,E]} = \transpose{[\varepsilon,m_{1r},m_{2r},E_r]}, 
~~\text{where}~~
m_{2r} = \frac{y_2}{y_1}m_{1r} ~~\text{and}~~ 
E_r = \frac{a}{2\varepsilon}m_{1r}^2 + \varepsilon,
\end{align*}
is a candidate, if $\frac{y_1}{m_{1r}} > 1$ and $2\varepsilon x + a m_{1r}(y_1 - m_{1r}) < 2\varepsilon^2$.
%% Case 4.
\item The point $\transpose{[\rho,m_1,m_2,E]} = \transpose{[x, 0, 0, \varepsilon]}$ is a candidate, if $x\geq\varepsilon$, $y_1 = 0$, $y_2 = 0$, and $z < \varepsilon$.
\item Compute $\rho_1$ and $\rho_2$ using the following formulas only if they are real.    
\begin{align*}
\rho_1 &= \frac{x}{2} + \frac{1}{2} \frac{\sqrt{x^2(2y_1^2 + \frac{1}{a}(\varepsilon + x - z)^2) - 2xy_1^2(z-\varepsilon) + ay_1^4}}{\sqrt{2y_1^2 + \frac{1}{a}(\varepsilon + x - z)^2}},\\
\rho_2 &= \frac{x}{2} - \frac{1}{2} \frac{\sqrt{x^2(2y_1^2 + \frac{1}{a}(\varepsilon + x - z)^2) - 2xy_1^2(z-\varepsilon) + ay_1^4}}{\sqrt{2y_1^2 + \frac{1}{a}(\varepsilon + x - z)^2}}.
\end{align*}
For real values of $\rho_1$ and $\rho_2$, compute $m_{1\alpha}$ and $m_{1\beta}$ using the following formulas only if they are real.
\begin{align*}
m_{1\alpha}(\rho) = \frac{1}{2}y_1 - \frac{1}{2a}\sqrt{-8a\rho^2 + 8ax\rho + a^2 y_1^2}
\quad\text{and}\quad
m_{1\beta}(\rho) = \frac{1}{2}y_1 + \frac{1}{2a}\sqrt{-8a\rho^2 + 8ax\rho + a^2 y_1^2}.
\end{align*}
Then, for real points:
\begin{itemize}
\item The point $\transpose{[\rho,m_1,m_2,E]} = \transpose{[\rho_1, m_{1\alpha}(\rho_1), \frac{y_2}{y_1}m_{1\alpha}(\rho_1), \varepsilon + a\frac{m_{1\alpha}(\rho_1)^2}{2\rho_1}]}$ is a candidate, if $\rho_1\geq\varepsilon$ and $\varepsilon + a\frac{m_{1\alpha}(\rho_1)^2}{2\rho_1} > z$.
\item The point $\transpose{[\rho,m_1,m_2,E]} = \transpose{[\rho_2, m_{1\alpha}(\rho_2), \frac{y_2}{y_1}m_{1\alpha}(\rho_2), \varepsilon + a\frac{m_{1\alpha}(\rho_2)^2}{2\rho_2}]}$ is a candidate, if $\rho_2\geq\varepsilon$ and $\varepsilon + a\frac{m_{1\alpha}(\rho_2)^2}{2\rho_2} > z$.
\item The point $\transpose{[\rho,m_1,m_2,E]} = \transpose{[\rho_1, m_{1\beta}(\rho_1), \frac{y_2}{y_1}m_{1\beta}(\rho_1), \varepsilon + a\frac{m_{1\beta}(\rho_1)^2}{2\rho_1}]}$ is a candidate, if $\rho_1\geq\varepsilon$ and $\varepsilon + a\frac{m_{1\beta}(\rho_1)^2}{2\rho_1} > z$.
\item The point $\transpose{[\rho,m_1,m_2,E]} = \transpose{[\rho_2, m_{1\beta}(\rho_2), \frac{y_2}{y_1}m_{1\beta}(\rho_2), \varepsilon + a\frac{m_{1\beta}(\rho_2)^2}{2\rho_2}]}$ is a candidate, if $\rho_2\geq\varepsilon$ and $\varepsilon + a\frac{m_{1\beta}(\rho_2)^2}{2\rho_2} > z$.
\end{itemize}
\end{itemize}
Step~2. Substitute all candidate points into the objective function
\begin{align*}
\abs{\rho - x}^2 + \abs{m_1 - y_1}^2 + \abs{m_2 - y_2}^2 + \abs{E - z}^2.
\end{align*}
Pick the point $\transpose{[\rho, m_1, m_2, E]}$ from the candidate points such that minimizes the value of the objective function, which gives the projection point.
\begin{remark}
For numerical stability, the computation of the factor $a = 1 + y_2^2/y_1^2$ must avoid division by small denominator. Notice, the two momentum components play symmetric roles. If $\abs{y_2} > \abs{y_1}$, we instead work with the variable $m_2$. The derivation of corresponding formulas follows the same procedure and is omitted for brevity.
\end{remark}

%%%%%%%%%%%%%%%%%%%%%%%%%%%%%%%%%%%%%%%%%%%%%%%%%%%%%%%%%%%%%%%%%%%%%%%%%%%%%%%%%%%%%%%%%%%%%%%%%%%%%%%%%%%%%%%%%%%%%%%
\section{Projection to numerical admissible set in three dimensions}\label{sec:3D_proj}
%%%%%%%%%%%%%%%%%%%%%%%%%%%%%%%%%%%%%%%%%%%%%%%%%%%%%%%%%%%%%%%%%%%%%%%%%%%%%%%%%%%%%%%%%%%%%%%%%%%%%%%%%%%%%%%%%%%%%%%
Let $\transpose{[u, v_1, v_2, v_3, w]}$ denote an out-of-bound cell average produced by a simulation in three-dimensional domain.
For a small prescribed $\varepsilon>0$, the numerical admissible set $G^\varepsilon$ in \eqref{invariant-domain} becomes 
\begin{align}
G^\varepsilon = \left\{\vec{U} = \transpose{[\rho, m_1, m_2, m_3, E]}\!:~ \rho \geq \varepsilon,~~ \rho e(\vec{U}) = E - \frac{m_1^2 + m_2^2 + m_3^2}{2\rho} \geq \varepsilon\right\}.
\end{align} 
The projection point on the closed convex set $G^\varepsilon$ is unique.
To find the projection of $\transpose{[u,v_1,v_2,v_3,w]}$, we need to solve the following minimization problem: find $\transpose{[\rho, m_1, m_2, m_3, E]}$ that
\begin{align}
\min_{\rho,m_1,m_2,m_3,E}&~~ \frac{1}{2}\big(\abs{\rho - u}^2 + \abs{m_1 - v_1}^2 + \abs{m_2 - v_2}^2 + \abs{m_3 - v_3}^2 + \abs{E - w}^2\big) \nonumber\\
\text{subject~to:}&~~ \varepsilon - \rho \leq 0 
\quad\text{and}\quad \varepsilon - E + \frac{m_1^2 + m_2^2 + m_3^2}{2\rho} \leq 0.
\end{align}
By KKT conditions, we have
\begin{subequations}\label{eq:3d_KKT}
\begin{itemize}
\item Stationarity condition
\begin{align}
\rho - u - \lambda - \mu\frac{m_1^2 + m_2^2 + m_3^2}{2\rho^2} = 0,\label{eq:1a_3D}\\
m_1 - v_1 + \mu \frac{m_1}{\rho} = 0,\label{eq:1b_3D}\\
m_2 - v_2 + \mu \frac{m_2}{\rho} = 0,\label{eq:1b2_3D}\\
m_3 - v_3 + \mu \frac{m_3}{\rho} = 0,\label{eq:1b3_3D}\\
E - w - \mu = 0.\label{eq:1c_3D}
\end{align}
\item Complementary slackness
\begin{align}
\lambda(\varepsilon - \rho) = 0,\label{eq:1d_3D}\\
\mu\Big(\varepsilon - E + \frac{m_1^2 + m_2^2 + m_3^2}{2\rho}\Big) = 0.\label{eq:1e_3D}
\end{align}
\item Primal feasibility
\begin{align}
\varepsilon - \rho \leq 0, \label{eq:f_3D}\\
\varepsilon - E + \frac{m_1^2 + m_2^2 + m_3^2}{2\rho} \leq 0.\label{eq:1g_3D}
\end{align}
\item Dual feasibility
\begin{align}
\lambda \geq 0,\label{eq:1h_3D}\\
\mu \geq 0.\label{eq:1i_3D}
\end{align}
\end{itemize}
\end{subequations}
From the dual feasibility, depending on $\lambda$ and $\mu$ are zero or positive, we have four different scenarios. Let us discuss case by case.

\paragraph{\bf Case~1 ($\mu = 0$ and $\lambda > 0$)}
When $\mu$ is zero, the \eqref{eq:1a_3D}-\eqref{eq:1c_3D} gives $\rho = u + \lambda$, $m_1 = v_1$, $m_2 = v_2$, $m_3 = v_3$, and $E = w$. When $\lambda > 0$, the \eqref{eq:1d_3D} implies $\rho = \varepsilon$. From \eqref{eq:1a_3D}, we get $\lambda = \varepsilon - u$. 
Thus, if $\transpose{[\varepsilon, v_1, v_2, v_3, w]}$ is a solution of \eqref{eq:3d_KKT}, the constraints $\varepsilon > u$ and \eqref{eq:1g_3D} need to be satisfied.

\paragraph{\bf Case~2 ($\mu = 0$ and $\lambda = 0$)}
When both $\mu$ and $\lambda$ are zero, the \eqref{eq:1a_3D}-\eqref{eq:1c_3D} gives $\rho = u$, $m_1 = v_1$, $m_2 = v_2$, $m_3 = v_3$, and $E = w$. 
Thus, if $\transpose{[u, v_1, v_2, v_3, w]}$ is a solution of \eqref{eq:3d_KKT}, the given point already belongs to the set $G^\varepsilon$, i.e., the projection point is itself.

\paragraph{\bf Case~3 ($\mu > 0$ and $\lambda > 0$)}
When both $\mu$ and $\lambda$ are positive, the \eqref{eq:1d_3D} gives $\rho = \varepsilon$ and \eqref{eq:1e_3D} becomes
\begin{align}\label{eq:3d_case3_E}
E = \frac{1}{2\varepsilon}(m_1^2 + m_2^2 + m_3^2) + \varepsilon.
\end{align}
If $v_1 = 0$, $v_2 = 0$, and $v_3 = 0$, by \eqref{eq:1b_3D}-\eqref{eq:1b3_3D}, we have $m_1 = 0$, $m_2 = 0$, and $m_3 = 0$, which further implies $E = \varepsilon$. From \eqref{eq:1c_3D}, we get $\mu = \varepsilon - w$ and from \eqref{eq:1a_3D}, we get $\lambda = \varepsilon - u$. 
Thus, if $\transpose{[\varepsilon, 0, 0, 0, \varepsilon]}$ is a solution of \eqref{eq:3d_KKT}, the constraints $\varepsilon > u$ and $\varepsilon > w$ need to be satisfied.
\par
If $v_3 = 0$ and $v_1$, $v_2$ are not both zero, this scenario degenerate to the 2D projection, which has been solved in \ref{sec:2D_proj}.
\par
If $v_3 \neq 0$, then $m_3 \neq 0$. Otherwise, \eqref{eq:1b3_3D} gives $v_3 = 0$, which leads to a contradiction. Multiply \eqref{eq:1b_3D} by $v_3$ and multiply \eqref{eq:1b3_3D} by $v_1$, respectively. We have
\begin{align}\label{eq:new_old_momentum_3D}
\begin{rcases*}
\Big(1+\dfrac{\mu}{\rho}\Big) m_1 v_3 = v_1 v_3\\
\Big(1+\dfrac{\mu}{\rho}\Big) m_3 v_1 = v_1 v_3
\end{rcases*}
\quad\Rightarrow\quad
m_1 v_3 = m_3 v_1
\quad\Rightarrow\quad
m_1 = \frac{v_1}{v_3}m_3.
\end{align}
Similarly, multiply \eqref{eq:1b2_3D} by $v_3$ and multiply \eqref{eq:1b3_3D} by $v_2$, respectively. We have
\begin{align}\label{eq:new_old_momentum2_3D}
\begin{rcases*}
\Big(1+\dfrac{\mu}{\rho}\Big) m_2 v_3 = v_2 v_3\\
\Big(1+\dfrac{\mu}{\rho}\Big) m_3 v_2 = v_2 v_3
\end{rcases*}
\quad\Rightarrow\quad
m_2 v_3 = m_3 v_2
\quad\Rightarrow\quad
m_2 = \frac{v_2}{v_3}m_3.
\end{align}
Let $a = 1 + v_1^2/v_3^2 + v_2^2/v_3^2$. Substituting $\rho = \varepsilon$ into \eqref{eq:1b3_3D} to solve $\mu$, and further substituting the result into \eqref{eq:1a_3D}, we have
\begin{align}\label{eq:3d_case3_mu_lamb}
\mu = \varepsilon\Big(\frac{v_3}{m_3} - 1\Big)
\quad\text{and}\quad
\lambda = \varepsilon - u - \frac{a}{2\varepsilon}m_3(v_3 - m_3).
\end{align}
By substituting \eqref{eq:new_old_momentum_3D} and \eqref{eq:new_old_momentum2_3D} into \eqref{eq:3d_case3_E} and combining the result with \eqref{eq:1c_3D} and \eqref{eq:3d_case3_mu_lamb}, we obtain
\begin{align}\label{eq:3d_case3_E2}
\frac{a}{2\varepsilon}m_3^2 + \varepsilon = E = \mu + w = \varepsilon\Big(\frac{v_3}{m_3} - 1\Big) + w.
\end{align}
Multiplying $2\varepsilon m_3$ on both sides above, we get a cubic equation with respect to $m_3$ as follows.
\begin{align*}
a m_3^3 + (4\varepsilon^2 - 2\varepsilon w)m_3 - 2\varepsilon^2 v_3 = 0.
\end{align*}
After getting all the real roots of the cubic equation above, we compute $m_1$ form \eqref{eq:new_old_momentum_3D}, $m_2$ form \eqref{eq:new_old_momentum2_3D}, $E$ from \eqref{eq:3d_case3_E2}, and $\mu, \lambda$ from \eqref{eq:3d_case3_mu_lamb}. 
Thus, if the result here is a solution of \eqref{eq:3d_KKT}, the constraints $\mu > 0$ and $\lambda > 0$ for \eqref{eq:3d_case3_mu_lamb} need to be satisfied.

\paragraph{\bf Case~4 ($\mu > 0$ and $\lambda = 0$)}
When $\mu$ is positive and $\lambda$ is zero, the KKT condition \eqref{eq:3d_KKT} simplify to $\rho \geq \varepsilon$ and
\begin{subequations}\label{eq:case4_3d_kkt}
\begin{align}
\rho - u - \mu\frac{m_1^2 + m_2^2 + m_3^2}{2\rho^2} &= 0,\label{eq:2a_3D}\\
m_1 - v_1 + \mu \frac{m_1}{\rho} &= 0,\label{eq:2b_3D}\\
m_2 - v_2 + \mu \frac{m_2}{\rho} &= 0,\label{eq:2b1_3D}\\
m_3 - v_3 + \mu \frac{m_3}{\rho} &= 0,\label{eq:2b2_3D}\\
E - w - \mu &= 0,\label{eq:2c_3D}\\
\varepsilon - E + \frac{m_1^2 + m_2^2 + m_3^2}{2\rho} &= 0.\label{eq:2d_3D}
\end{align}
\end{subequations}
If $v_1 = 0$, $v_2 = 0$, and $v_3 = 0$, we have $m_1 = 0$, $m_2 = 0$, and $m_3 = 0$. By \eqref{eq:2a_3D} and \eqref{eq:2d_3D}, we get $\rho = u$ and $E = \varepsilon$. From \eqref{eq:2c_3D}, we get $\mu = \varepsilon - w$.
Thus, if $\transpose{[u, 0, 0, 0, \varepsilon]}$ is a solution of \eqref{eq:3d_KKT}, the constraints $\varepsilon > w$ and $\varepsilon \leq u$ need to be satisfied.
\par
If $v_3 = 0$ and $v_1$, $v_2$ are not both zero, this scenario degenerate to the 2D projection, which has been solved in \ref{sec:2D_proj}.
\par
If $v_3 \neq 0$, then $m_3 \neq 0$. Otherwise, \eqref{eq:2b2_3D} gives $v_3 = 0$, which leads to a contradiction. Recall $a = 1 + v_1^2/v_3^2 + v_2^2/v_3^2$, $m_1v_3 = m_3v_1$ and $m_2v_3 = m_3v_2$, by eliminating $m_1$ and $m_2$ in \eqref{eq:2a_3D} and substituting \eqref{eq:2b2_3D} into \eqref{eq:2a_3D}, we have
\begin{align*}
\rho - u - a\frac{m_3}{2\rho} \Big(\mu\frac{m_3}{\rho}\Big) = 0 
\quad\Rightarrow\quad
a\frac{m_3}{\rho} (v_3 - m_3) = 2(\rho - u),
\end{align*}
which is a quadratic equation with respect to $m_3$, namely $am_3^2 - av_3m_3 + 2\rho^2 - 2u\rho = 0$.
Thus, let $\Delta = -8a\rho^2 + 8au\rho + a^2v_3^2$ denote the discriminant. We have
\begin{align}\label{eq:case4_3d_m1}
m_3 = \frac{v_3}{2} - \frac{1}{2a}\sqrt{\Delta}
\quad\text{or}\quad
m_3 = \frac{v_3}{2} + \frac{1}{2a}\sqrt{\Delta}.
\end{align}
Similarly, using $m_1 = m_3 v_1/v_3$ and $m_2 = m_3 v_2/v_3$ to eliminate $m_1$ and $m_2$ in \eqref{eq:2d_3D} and adding \eqref{eq:2c_3D} and \eqref{eq:2d_3D} together, we have 
\begin{align}
E &= \varepsilon + a\frac{m_3^2}{2\rho}, \\
\mu &= \varepsilon + a\frac{m_3^2}{2\rho} - w. \label{eq:3b_3D}
\end{align}
Therefore, once we compute $\rho$, the \eqref{eq:case4_3d_kkt} is solved.
Substitute \eqref{eq:3b_3D} into \eqref{eq:2b2_3D}. After multiplying $2\rho^2$, we end up with a quadratic equation with respect to $\rho$, as follows. Notice that $v_3\neq0$ implies $m_3 \neq v_3$. 
\begin{align}\label{eq:4_3D}
2(m_3 - v_3)\rho^2 + 2(\varepsilon - w)m_3\rho + am_3^3 = 0.
\end{align}
To solve $\rho$, let us first substitute $m_3 = \frac{v_3}{2} - \frac{1}{2a}\sqrt{\Delta}$ into above. The other root in \eqref{eq:case4_3d_m1} can be addressed in the same manner. We get  
\begin{align}
\sqrt{\Delta}\Big(3v_3^2 + \frac{\Delta}{a^2} + 8(\varepsilon-w)\frac{\rho}{a} + \frac{8}{a}\rho^2\Big) = av_3^3 + \frac{3}{a}v_3\Delta + 8(\varepsilon-w)v_3\rho - 8v_3\rho^2.
\end{align}
Recall the discriminant $\Delta = -8a\rho^2 + 8au\rho + a^2v_3^2$. Square both sides above and after some simplification, we obtain 
\begin{align}\label{eq:case4_3d_equation_rho}
4\Big(2v_3^2 + \frac{1}{a}(\varepsilon + u - w)^2\Big)\rho^2 
- 4u\Big(2v_3^2 + \frac{1}{a}(\varepsilon + u - w)^2\Big)\rho 
+ 2uv_3^2(w-\varepsilon) - av_3^4 = 0.
\end{align}
The two roots of above quadratic equation are:
\begin{subequations}\label{eq:case4_3d_rho}
\begin{align}
\rho &= \frac{u}{2} + \frac{1}{2} \frac{\sqrt{u^2(2v_3^2 + \frac{1}{a}(\varepsilon + u - w)^2) - 2uv_3^2(w-\varepsilon) + av_3^4}}{\sqrt{2v_3^2 + \frac{1}{a}(\varepsilon + u - w)^2}},\\
\quad\text{or}\quad
\rho &= \frac{u}{2} - \frac{1}{2} \frac{\sqrt{u^2(2v_3^2 + \frac{1}{a}(\varepsilon + u - w)^2) - 2uv_3^2(w-\varepsilon) + av_3^4}}{\sqrt{2v_3^2 + \frac{1}{a}(\varepsilon + u - w)^2}}.
\end{align}
\end{subequations}
For the second root in \eqref{eq:case4_3d_m1}, substitute $m_3 = \frac{v_3}{2} + \frac{1}{2a}\sqrt{\Delta}$ into \eqref{eq:4_3D}. After simplification, we end up with exactly the same equation as \eqref{eq:case4_3d_equation_rho}, namely the expressions in \eqref{eq:case4_3d_rho} are still the roots.
Thus, if the result here is a solution of \eqref{eq:3d_KKT}, the constraints $\mu > 0$ for \eqref{eq:3b_3D} and $\rho \geq \varepsilon$ for \eqref{eq:case4_3d_rho} need to be satisfied.
\par 
Finally, among all the points listed above, we pick the point $\transpose{[\rho, m_1, m_2, m_3, E]}$  which gives the smallest value of the objective function, and it is the projection point.
The algorithm for projection onto the invariant domain set can be summarized analogously to the previous. For the sake of conciseness, we omit the detailed description.

%%%%%%%%%%%%%%%%%%%%%%%%%%%%%%%%%%%%%%%%%%%%%%%%%%%%%%%%%%%%%%%%%%%%%%%%%%%%%%%%%%%%%%%%%%%%%%%%%%%%%%%%%%%%%%%%%%%%%%%
\section{Real roots of depressed cubic equation}\label{appendix:cubic_root}
%%%%%%%%%%%%%%%%%%%%%%%%%%%%%%%%%%%%%%%%%%%%%%%%%%%%%%%%%%%%%%%%%%%%%%%%%%%%%%%%%%%%%%%%%%%%%%%%%%%%%%%%%%%%%%%%%%%%%%%
We list the following  formulae to compute all the real roots of a depressed cubic equation in the form of $x^3 + px + q = 0$, where $p, q\in\IR$. For more details, see \cite{fan1989new} and \cite[Appendix~A]{zhao2019new}.
\begin{itemize}
\item[1.] If $p = q = 0$, then the equation has a triple real root.
\begin{align}
x_1 = x_2 = x_3 = 0.
\end{align}
\item[2.] If $4p^3 + 27q^2 > 0$, then the equation only has one real root.
\begin{align}
x_1 = -\frac{1}{3}(\sqrt[3]{Y_1} + \sqrt[3]{Y_2}).
\end{align}
Here $Y_1 = \frac{3}{2}(9q + \sqrt{12p^3 + 81q^2})$ and $Y_2 = \frac{3}{2}(9q - \sqrt{12p^3 + 81q^2})$.
\item[3.] If $4p^3 + 27q^2 = 0$ (but $p\neq0$ or $q\neq0$), then the equation has three real roots, one of which is a double root.
\begin{subequations}
\begin{align}
x_1 &= \frac{3q}{p},\\
x_2 &= x_3 = -\frac{3q}{2p}.
\end{align}
\end{subequations}
\item[4.] If $4p^3 + 27q^2 < 0$ (this implies $p<0$), then the equation has three unequal real roots.
\begin{subequations}
\begin{align}
x_1 &= -\frac{2\sqrt{-3p}}{3} \cos{\frac{\theta}{3}}, \\
x_2 &= \frac{\sqrt{-3p}}{3} (\cos{\frac{\theta}{3}} + \sqrt{3}\sin{\frac{\theta}{3}}), \\
x_3 &= \frac{\sqrt{-3p}}{3} (\cos{\frac{\theta}{3}} - \sqrt{3}\sin{\frac{\theta}{3}}).
\end{align}
\end{subequations}
Here, the $\theta = \arccos{\big(-\frac{3\sqrt{3}q}{2p\sqrt{-p}}\big)}$.
\end{itemize}
The advantage of using the  formulae above is that it allows us to only use real number operations to obtain real roots. Eliminating the need for complex number operations simplifies implementation.

%% If you have bibdatabase file and want bibtex to generate the
%% bibitems, please use
%%
%%  \bibliographystyle{elsarticle-num} 
%%  \bibliography{<your bibdatabase>}
%\section*{References}
\bibliographystyle{elsarticle-harv}
\bibliography{bibliography} % Load bibliography.bib.

\end{document}